\documentclass[11pt, twoside]{article}

\usepackage{authblk}
\usepackage{amsmath,amsthm}

\usepackage[utf8]{inputenc}

\usepackage{enumerate}

\usepackage{fancyhdr}
\usepackage{cite}
\usepackage{enumitem}

\usepackage{tikz}
\usetikzlibrary{patterns.meta}
\usetikzlibrary {arrows.meta}
\usetikzlibrary{decorations.pathreplacing}

\usepackage{graphics}

\usepackage{subcaption}

\usepackage{algcompatible}
\usepackage{algorithm}

\usepackage{xargs}                      
\usepackage[colorinlistoftodos,prependcaption,textsize=tiny]{todonotes}
\usepackage[normalem]{ulem}

\newcommandx{\pcomment}[2][1=]{\todo[linecolor=red,backgroundcolor=red!25,bordercolor=red,#1]{#2}}
\newcommandx{\kcomment}[2][1=]{\todo[linecolor=blue,backgroundcolor=blue!25,bordercolor=blue,#1]{#2}}

\newif\ifpreprint

\usepackage{hyperref}
\hypersetup{%
    colorlinks=True, 
    citecolor=blue,
    linkcolor=black}

\usepackage[charter]{mathdesign}

\newcommand{\rom}[1]{\uppercase\expandafter{\romannumeral #1\relax}}

\pagestyle{myheadings}

  
  \theoremstyle{definition}
  \newtheorem{theorem}{Theorem}[section]
  \newtheorem{corollary}[theorem]{Corollary}
  
  \newtheorem{lemma}[theorem]{Lemma}
  \newtheorem{definition}[theorem]{Definition}
  \newtheorem{remark}[theorem]{Remark}

\newtheorem{thmx}{Assumption}
\newtheorem{assumption}[thmx]{Assumption}



  \newtheorem*{assumption*}{Assumption}

  
  
  \numberwithin{equation}{section}
    
  \frenchspacing
  
  \textwidth=167mm
  \textheight=23cm
  \parindent=16pt
  \oddsidemargin=-0.5cm
  \evensidemargin=-0.5cm
  \topmargin=-0.5cm



\providecommand{\Div}{\operatorname{div}}          





\providecommand{\Det}{\operatorname{det}}                    


\providecommand{\argmin}{\operatorname*{argmin}}  
\providecommand{\Id}{\Op{Id}}                     

\providecommand{\limsup}{\operatorname*{limsup}}




\newcommand{\VN}{{\mathbf{N}}}

\newcommand{\VR}{{\mathbf{R}}}






\newcommand{\Dsf}{{\mathsf{D}}}


\providecommand{\Ch}{{\cal H}}













\newcommand*{\Op}[1]{\mathsf{#1}} 




















\setenumerate[0]{label=(\alph*)}

\newcommand{\eps}{{\varepsilon}}


\newcommand{\ben}{\begin{equation}}
\newcommand{\een}{\end{equation}}

\newcommand{\benn}{\begin{equation*}}
\newcommand{\eenn}{\end{equation*}}

\usepackage[colorinlistoftodos]{todonotes}

\setlength{\jot}{2pt}

\usepackage{authblk}
\usepackage[T1]{fontenc}
\usepackage[utf8]{inputenc}

\begin{document}

\title{Minimisation of peak stresses with the shape derivative}
\author{Phillip Baumann and Kevin Sturm}
\affil{TU Wien}
\maketitle

\begin{abstract}
This paper is concerned with the minimisation of peak stresses occurring in linear elasticity. We propose to minimise the maximal von Mises stress of the elastic body. This leads to a nonsmooth shape functional. We derive the shape derivative and associate it with the Clarke sub-differential. Using a steepest descent algorithm we present numerical simulations. We compare our results to the usual $p$-norm regularisation and show that our algorithm performs better in the presented tests.  
\end{abstract}

\section{Introduction}\label{sec:introduction}
Mechanical stress describes forces acting on a continuous body while undergoing deformation. As mechanical stresses can lead to damage of materials in machines and structures, it is important in production processes to reduce stresses. For example stresses in iron parts of machines undergoing high stresses influence their life expectancies.  Mathematically the reduction of stresses can be expressed by so-called \emph{stress constraints} enforcing an upper limit for stresses of a material. 

First studies on (local) stress constraints can be found in \cite{a_DUBE_1998a}.   Let us mention the work \cite{a_ALJOMA_2004a} dealing with topology optimisation, where stress constraints are considered by the homogenisation method. In addition to an $L_2$ averaging of the stress, the authors of \cite{a_ALJOMA_2004a} used a weighting factor that allows to localise the stress constraints at a specific region. In \cite{a_CHNO_2010a} the authors employed the solid isotropic material with penalisation (SIMP) method to tackle stress constraints in the framework of topology optimisation. In their work, the authors discussed global as well as regional stress constraints and further introduced an iterative normalization to control local stresses. Further SIMP based approaches can be found in \cite{a_VELAKE_2017a,a_HOTOKL_2013a,a_BR_2008a,a_WAQI_2018a,a_TRTO_2018a} to name only a few. In \cite{a_KOST_2012a}, as an alternative to SIMP, a free parameter material optimisation was proposed to address the peak stresses. Furthermore, in \cite{a_BUST_2005a} the authors utilise the phase-field approach to relax local stress constraints. Besides constraints on the von Mises stress, the authors additionally consider total stress constraints and derive constraint qualifications as well as first order optimality conditions on a discrete level.

 Another approach to tackle stress constraints in structural optimisation employs gradients based on topological and shape sensitivities. In \cite{a_AMNO_2010} a topology optimisation problem dealing with linear elasticity and stress constraints is considered. In this paper the von Mises stress is regularised and the topological derivative is computed. In \cite{a_AMNO_2012a}, the authors extended this method to target constraints on the Drucker-Prager stress. Similarly, also \cite{a_SUTA_2013a} employs the topological derivative. The authors of \cite{a_ALJO_2008} compute both, the topological and shape gradient, for a cost functional similar to \cite{a_ALJOMA_2004a} with an additional volume term to regularise the problem. They used both gradients in a numerical scheme to guide a level set function. This allowed to alternately perform smooth boundary deformations as well as nucleations of holes leading to topological changes.
A related approach is utilised in \cite{a_DEST_2022}, where the authors employed a SIMP based approach to generate an initial shape followed by smooth boundary variations. In contrast to \cite{a_ALJOMA_2004a}, the authors of \cite{a_DEST_2022} did not compute the shape gradient on a continuous level but discretised the problem in terms of nodes describing the design boundary.

A common strategy to deal with stress constraints is the so-called $p$-norm approach, since the $L_p$ norm mimics pointwise maxima for sufficiently large values of $p$. This idea was employed in \cite{a_PITO_2018}, where the $p$-norm is used to simulate peak von Mises stresses. The authors first discretised the partial differential equation (PDE) and the stress constraints. Afterwards they computed shape sensitivities on a discrete level to guide a level set function. With this approach they considered the minimisation of a stress functional as well as stress constraints in the form of penalty terms. In \cite{a_PITO_2018b} the authors extended their method to further tackle stress optimisation in a given subregion of the domain.
A similar approach to treat stress constraints is followed in \cite{a_NGKI_2020}, where the authors used trimmed hexahedral meshes at the design boundary to achieve accurate stress estimations and shape sensitivities.

In this paper we are interested in the stress minimisation of a physical material described by the equations of linear elasticity.  We minimise a nonsmooth penalisation cost function that drives the stress below a certain prescribed threshold.  More specifically, we propose to directly minimise the maximum norm of the von Mises stress minus the upper allowed stress. Due to the involved maximum norm this leads naturally to a nonsmooth shape optimisation problem for which shape sensitivities can be computed. In contrast to \cite{a_PITO_2018}, we work on a continuous level and also derive shape sensitivities in the continuous setting. The resulting shape derivative at a given shape is in general nonlinear with respect to the vector field. Our strategy can be seen as an alternative to the $p$-norm approach and by nature the maximum norm captures perfectly peak stresses, which are reduced during the minimisation process. We compute the shape derivative for the maximum von Mises stress and relate the derivative to the Clarke subdifferential by using higher regularity results for linear elasticity. In the numerical part of the paper we use the shape sensitivities and a mesh deformation approach to tackle several model problems. We compare our results with the usual $p$-norm regularisation of the stress and compare the resulting stresses and designs.

\subsection*{Problem description}
We consider the following model problem. Let $\Dsf\subset\VR^d$ an open and bounded Lipschitz domain and $\Gamma\subset\Dsf$ be a smooth Lipschitz manifold. Introduce the admissible shapes by
\begin{equation}
\mathcal A:=\{\Omega\subset\Dsf|\;\Omega\text{ is Lipschitz and }\Gamma\subset\partial\Omega\}.
\end{equation}
The elastic body will be fixed at $\Gamma$. Further, our elastic body is assumed to have a target volume $V>0$ and thus we introduce the shape functional
\begin{equation}\label{eq:def_jvol}
J_\text{vol}(\Omega):=\left(|\Omega|-V\right)^2.
\end{equation}
Global minimisers of this functional are possible designs for the elastic body. Additionally, we consider a constraint on the von Mises stress
\begin{equation}
\sigma_M^2(u_\Omega)\le\delta,\quad\text{ in }\overline{\Omega\setminus\omega},
\end{equation}
 where $u_\Omega\in H^1_\Gamma(\Omega)^d$ solves the equation of linear elasticity on $\Omega$, i.e.
\begin{equation}\label{eq:state}
\int_\Omega A\eps(u_\Omega):\eps(\varphi)\;dx=\int_\Omega f\cdot\varphi\;dx+\int_{\Gamma^N}g\cdot\varphi\;dS,\quad\text{ for all }\varphi\in H^1_\Gamma(\Omega)^d.
\end{equation}
Here, $\Gamma\subset\Dsf$ denotes the portion of a Lipschitz boundary, $H^1_\Gamma(\Omega)^d:=\{u\in H^1(\Omega)^d|\;u|_\Gamma=0\}$, $\Gamma^N:=\partial\Omega\setminus\Gamma$, $\omega\subset \Dsf$ open such that $\Gamma\subset\omega$, $f\in H^1(\Dsf)^d$, $g\in H^2(\Dsf)^d$, $\delta>0$ is a given threshold and $\eps(u)$ denotes the symmetrised gradient
\begin{equation}
\eps(v):=\frac{1}{2}\left(\partial v+\partial v^\top\right),\quad\text{ for all }v\in H^1(\Dsf)^d.
\end{equation}
Furthermore for given Lam\'{e} coefficients $\lambda,\mu>0$ the elasticity tensor $A$ is defined by
\begin{equation}
A(M):=2\mu M+\lambda\text{tr}(M)I_d,\quad\text{ for all }M\in\VR^{d\times d},
\end{equation}
with $I_d$ denoting the identity matrix and the squared von Mises stress is given by
\begin{equation}
\sigma_M^2(u):=B\eps(u):\eps(u),
\end{equation}
where
\begin{equation}\label{eq:mises_tensor}
B(M):=6\mu M+(\lambda-2\mu)\text{tr}(M)I_d,\quad\text{ for all }M\in\VR^{d\times d}.
\end{equation}
We incorporate this pointwise constraint on the von Mises stress as a penalty to the objective functional $J_\text{vol}$. The stress constraints are described by minima of the following shape function
\begin{equation}\label{eq:penalty_term}
J_\sigma(\Omega):=\max\{\max_{x\in\overline{\Omega\setminus\omega}}\sigma_M^2(u_\Omega)(x)-\delta,0\},
\end{equation}
where $u_\Omega\in H^1_\Gamma(\Omega)^d$ solves \eqref{eq:state}. In order to find an elastic body $\Omega$ with minimal stresses and the target volume $V$, we introduce for a parameter $\alpha>0$ the penalised objective functional
\begin{equation}
J(\Omega):=J_\text{vol}(\Omega)+\alpha J_\sigma(\Omega).
\end{equation}
\paragraph{Structure of the paper} In Section \ref{sec:material_derivative} we compute the material derivative of the state variable and show some convergence results needed later on. In Section \ref{sec:shape_derivative} we use the direct method and a Danskin-type result, employing the results of Section \ref{sec:material_derivative}, to compute the one sided directional derivative of the cost functional. We therefore split the cost functional and deal with the smooth and nonsmooth part separately. In Section \ref{sec:hilbert_setting} we reformulate the computed derivatives in a Hilbert space setting and derive steepest descent directions  as well as optimality criteria. In Section \ref{sec:clarke} we introduce the framework of Clarke differentiation and draw a connection between our results and the Clarke subgradient. In Section \ref{sec:numerics} we show some numerical results for a simple model problem.
\section{Shape sensitivity analysis and material derivative}\label{sec:material_derivative}
Let $\Dsf\subset\VR^d$ open denote the hold-all domain, $\Omega\in\mathcal A$ be an initial shape with boundary $\partial \Omega=\Gamma\cup\Gamma^N$ and fix an open set $\omega\subset\Dsf$ such that $\Gamma\subset\omega$. In order to deduce a derivative consistent with our problem formulation in Section \ref{sec:introduction}, we consider deformation vector fields $X\in C^1(\bar\Dsf)^d$ with $\text{supp} (X)\subset (\Dsf\setminus\bar{\omega})$. On the one hand, this ensures that the portion of the boundary $\Gamma$ remains fixed. On the other hand, this choice allows us to avoid the region of low regularity, i.e. the intersection of both boundary parts.\newline
In the following we fix a vector field $X\in C^1(\bar\Dsf)^d$ with $\text{supp} (X)\subset (\Dsf\setminus\bar{\omega})$. Additionally, we denote for sufficiently small $t\ge0$ and $Y\in C^1(\bar\Dsf)^d$ with $\text{supp} (Y)\subset (\Dsf\setminus\bar{\omega})$ and $\|Y\|_{C^1(\bar\Dsf)^d}$ small the deformed domain $\Omega_{Y,t}:=F_{Y,t}(\Omega)$ and the deformed boundary $\Gamma^N_{Y,t}:=F_{Y,t}(\Gamma^N)$, where $F_{Y,t}:=\text{Id}+Y+tX$. Similarly we denote the unperturbed objects $F_Y:=F_{Y,0}$, $\Omega_Y:=\Omega_{Y,0}$ and $\Gamma^N_Y:=\Gamma^N_{Y,0}$. With the previous notations we introduce the perturbed state variable $u_{Y,t}\in H^1_\Gamma(\Omega_{Y,t})^d$ as the unique solution to
\begin{equation}\label{eq:perturbed_state}
\int_{\Omega_{Y,t}} A \eps(u_{Y,t}):\eps(\varphi)\; dx=\int_{\Omega_{Y,t}} f\cdot\varphi\; dx+\int_{\Gamma^N_{Y,t}}g\cdot\varphi\;dS,\quad\text{ for all }\varphi\in H^1_\Gamma(\Omega_{Y,t})^d,
\end{equation}
and similarly for $t=0$ we denote the unperturbed state variable $u_Y\in H^1_\Gamma(\Omega)^d$ as the unique solution to
\begin{equation}\label{eq:unperturbed_state}
\int_{\Omega_Y} A \eps(u_Y):\eps(\varphi)\; dx=\int_{\Omega_Y} f\cdot\varphi\; dx+\int_{\Gamma^N_Y}g\cdot\varphi\;dS,\quad\text{ for all }\varphi\in H^1_\Gamma(\Omega_Y)^d.
\end{equation}

    According to \cite[Theorem 2.2.2]{b_ZI_1989a} given a bi-Lipschitz transformation field $F:\VR^d\to\VR^d$, there holds \[\varphi\in H^1_\Gamma(\Omega)^d\quad\text{ iff }\quad \varphi\circ F\in H^1_\Gamma(F^{-1}(\Omega))^d,\] and thus we can reformulate the state equations \eqref{eq:perturbed_state} and \eqref{eq:unperturbed_state} on the fixed domain $\Omega$. Indeed, a change of variables shows that $u^{Y,t}:=u_{Y,t}\circ F_{Y,t}$ satisfies
\begin{equation}\label{eq:perturbed_state_lift}
\int_{\Omega} \xi_{Y,t}A \eps_{Y,t}(u^{Y,t}):\eps_{Y,t}(\varphi)\; dx=\int_{\Omega}\xi_{Y,t} f\circ F_{Y,t}\cdot\varphi\; dx+\int_{\Gamma^N}\nu_{Y,t} g\circ F_{Y,t}\cdot\varphi\;dS,\quad\text{ for all }\varphi\in H^1_\Gamma(\Omega)^d,
\end{equation}
where $\xi_{Y,t}:=\det(\partial F_{Y,t})$, $\nu_{Y,t}:=\det(\partial F_{Y,t})|(\partial F_{Y,t})^{-\top}n|$ and $\eps_{Y,t}(\varphi):=\frac{1}{2}\left(\partial \varphi (\partial F_{Y,t})^{-1}+(\partial F_{Y,t})^{-\top}\partial\varphi^\top\right)$. In this context $n$ denotes the normal vector on $\Gamma^N$. A similar computation shows
\begin{equation}\label{eq:unperturbed_state_lift}
\int_{\Omega} \xi_{Y}A \eps_{Y}(u^{Y}):\eps_{Y}(\varphi)\; dx=\int_{\Omega}\xi_{Y} f\circ F_{Y}\cdot\varphi\; dx+\int_{\Gamma^N}\nu_{Y} g\circ F_{Y}\cdot\varphi\;dS,\quad\text{ for all }\varphi\in H^1_\Gamma(\Omega)^d,
\end{equation}
where $\xi_{Y}:=\det(\partial F_{Y})$, $\nu_{Y}:=\det(\partial F_{Y})|(\partial F_{Y})^{-\top}n|$ and $\eps_{Y}(\varphi):=\frac{1}{2}\left(\partial \varphi (\partial F_{Y})^{-1}+(\partial F_{Y})^{-\top}\partial\varphi^\top\right)$.
In the following lemma and also subsequently throughout the paper, the notation $Y\to 0$ is understood with respect to the norm on $C^1(\bar\Dsf)^d$, i.e. $\|Y\|_{C^1(\bar\Dsf)^d}\to 0$.
\begin{lemma}\label{lem:transformations}
Let $\Omega\subset\Dsf$ a Lipschitz set. Furthermore, let $f\in H^1(\Dsf)^d$, $g\in H^2(\Dsf)^d$, $X\in C^1(\bar\Dsf)^d$ with $\text{supp} (X)\subset (\Dsf\setminus\bar{\omega})$ and define for $t>0$ sufficiently small and $Y\in C^1(\bar\Dsf)^d$ with $\text{supp} (Y)\subset (\Dsf\setminus\bar{\omega})$ and $\|Y\|_{C^1(\bar\Dsf)^d}$ sufficiently small. Then there holds
\begin{itemize}
\item[(i)] $\displaystyle\lim_{\substack{Y\to 0\\ t\searrow 0}}\left\|\frac{f\circ F_{Y,t}-f\circ F_Y}{t}-\partial fX\right\|_{L_2(\Dsf)^d}=0$,
\item[(ii)] $\displaystyle\lim_{\substack{Y\to 0\\ t\searrow 0}}\left\|\frac{g\circ F_{Y,t}-g\circ F_Y}{t}-\partial gX\right\|_{L_2(\partial\Omega)^d}=0$.
\end{itemize}
\end{lemma}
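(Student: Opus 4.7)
The plan is to reduce both items to the smooth case by a density argument, and for smooth integrands to apply the fundamental theorem of calculus along the affine path $s\mapsto F_{Y,s}(x)=F_Y(x)+sX(x)$. The underlying geometric fact I will use throughout is that for $\|Y\|_{C^1(\bar\Dsf)^d}$ and $t$ sufficiently small, $\partial F_{Y,s}=I+\partial Y+s\,\partial X$ is close to the identity, so the family $\{F_{Y,s}\}$ is uniformly bi-Lipschitz on $\bar\Dsf$ and the Jacobians $\xi_{Y,s}$ together with their inverses are uniformly bounded. This makes change of variables and all composition estimates uniform in $(Y,s)$.

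For part (i), I first treat $\tilde f\in C^1(\bar\Dsf)^d$, for which the fundamental theorem of calculus gives
\[
\tilde f(F_{Y,t}(x))-\tilde f(F_Y(x))=\int_0^t \partial\tilde f(F_{Y,s}(x))X(x)\,ds
\]
pointwise. Dividing by $t$ and subtracting $\partial\tilde f\,X$ turns the right-hand side into an average, which converges to zero in $L_2(\Dsf)^d$ by uniform continuity of $\partial\tilde f$ on $\bar\Dsf$ as $Y\to 0$ and $t\searrow 0$. To pass from $C^1$ to $f\in H^1(\Dsf)^d$, I approximate $f$ by a sequence $f_n\in C^1(\bar\Dsf)^d$ with $f_n\to f$ in $H^1$. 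The line-integral identity extends to $f\in H^1$ via the ACL characterisation of Sobolev functions (compatible with the chain rule for bi-Lipschitz maps in \cite[Theorem~2.2.2]{b_ZI_1989a}). Combined with Minkowski's integral inequality and change of variables, this yields the uniform bound
\[
\left\|\frac{f\circ F_{Y,t}-f\circ F_Y}{t}\right\|_{L_2(\Dsf)^d}\le C\,\|X\|_{L^\infty}\,\|\partial f\|_{L_2(\Dsf)^{d\times d}}
\]
for all small $Y,t$, with $C$ depending only on the bi-Lipschitz bounds. Applied to $f-f_n$, and combined with $\partial(f-f_n)\,X\to 0$ in $L_2(\Dsf)^d$, this lets me first fix $n$ large and then take the joint limit in $(Y,t)$ for the smooth approximation $f_n$.

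For part (ii) I follow the same template with $g\in H^2(\Dsf)^d$. Its distributional derivative $\partial g$ lies in $H^1(\Dsf)^{d\times d}$ and therefore has a well-defined $L_2$-trace on $\partial\Omega$. The smooth case again follows from the fundamental theorem of calculus, while the density step uses the analogous uniform boundary estimate
\[
\left\|\frac{g\circ F_{Y,t}-g\circ F_Y}{t}\right\|_{L_2(\partial\Omega)^d}\le C\,\|X\|_{L^\infty}\,\|\partial g\|_{H^1(\Dsf)^{d\times d}},
\]
obtained by combining Minkowski's inequality along the path with the trace theorem and the uniform Jacobian bounds. Approximating $g$ in $H^2$ by smooth functions and applying this bound to the difference closes the argument.

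The main obstacle I expect is rigorously justifying the line-integral identity for merely Sobolev integrands, and the corresponding uniform $L_2$-estimates for the difference quotient: once these are in place, the separation of the smooth piece from the rough remainder via density reduces the claim to a routine uniform-continuity argument on the smooth part.
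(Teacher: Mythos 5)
Your proposal is correct and follows the paper's overall strategy: prove the claim for smooth integrands via the fundamental theorem of calculus along the affine path $s\mapsto F_Y+sX$ (the paper's identity \eqref{eq:taylor_approx}), then transfer to $f\in H^1(\Dsf)^d$ and $g\in H^2(\Dsf)^d$ by density, using uniform bounds on the Jacobians for the change of variables and, for item (ii), the trace inequality $\|\varphi\|_{L_2(\partial\Omega)^d}\le C\|\varphi\|_{H^1(\Dsf)^d}$ together with $H^1$-approximation of $\partial g$ --- exactly the paper's ingredients. The one genuine difference lies in how the density step is organised. The paper couples the mollification parameter to $t$ (choosing $\eps=t$, resp.\ $\eps=t^2$) and then invokes the smooth-case limit for $\varphi=f_\eps$; this is slightly delicate, since the approximant then depends on $t$ and the modulus of continuity of $\partial f_\eps$ degenerates as $\eps\to0$. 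You instead establish the $(Y,t)$-uniform difference-quotient bound $\left\|t^{-1}\left(f\circ F_{Y,t}-f\circ F_Y\right)\right\|_{L_2(\Dsf)^d}\le C\,\|X\|_{L^\infty}\|\partial f\|_{L_2(\Dsf)^{d\times d}}$ (justifying the line-integral identity for Sobolev $f$ by approximation, both sides being continuous in $f$ thanks to the uniform bi-Lipschitz bounds and the fact that $F_{Y,st}$ maps $\Dsf$ onto itself), apply it to $f-f_n$ with $n$ fixed, and only then pass to the joint limit in $(Y,t)$ for the fixed smooth $f_n$. This decoupled two-step limit is the standard, more robust bookkeeping: it costs one extra (routine) uniform estimate but makes the density step fully rigorous, and the same bound, routed through $\|\partial g\circ F_{Y,st}\|_{H^1(\Dsf)}\le C\|\partial g\|_{H^1(\Dsf)}$ and the trace theorem, handles the boundary case (ii) explicitly, where the paper is terse (``similar arguments'').
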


\begin{proof}
ad (i): First, note that for $\varphi\in C^\infty(\Dsf)^d$ there holds
\begin{equation}\label{eq:taylor_approx}
\varphi\circ F_{Y,t}(x)-\varphi\circ F_Y(x)=t\int_0^1\partial \varphi(x+Y(x)+stX(x))X(x)\;ds.
\end{equation}
Since the integrand is bounded, we observe $|\varphi\circ F_{Y,t}(x)-\varphi\circ F_Y(x)|\le Ct$ and thus further
\begin{equation}\label{eq:approx_f_smooth}
\|\varphi^{Y,t}-\varphi^Y\|_{L_2(\Dsf)^d}\le Ct,
\end{equation}
where here and throughout the proof $C\in\VR$ denotes a constant independent of $t$ and $Y$. Next we consider $f\in H^1(\Dsf)^d$ and choose a smooth approximation. That is, for each $\eps>0$ a function $f_\eps\in C^\infty(\Dsf)^d$ such that $\|f_\eps-f\|_{H^1(\Dsf)^d}\le \eps$. An application of the triangle inequality yields
\begin{equation}\label{eq:approx_f_tri}
\|f^{Y,t}-f^Y\|_{L_2(\Dsf)^d}\le \|f^{Y,t}-f_\eps^{Y,t}\|_{L_2(\Dsf)^d}+\|f_\eps^{Y,t}-f_\eps^Y\|_{L_2(\Dsf)^d}+\|f_\eps^{Y}-f^Y\|_{L_2(\Dsf)^d}.
\end{equation}
A change of variables now shows that
\begin{equation}
\|f^{Y,t}-f_\eps^{Y,t}\|^2_{L_2(\Dsf)^d}=\int_\Dsf|f^{Y,t}-f_\eps^{Y,t}|^2\;dx=\int_\Dsf\Det(I+\partial Y+t\partial X)^{-1}|f-f_\eps|^2\;dx.
\end{equation}
Since $\Det(I+\partial Y+t\partial X)\to1$ as $Y\to0$ in $C^1(\bar\Dsf)^d$ and $t\searrow 0$, there holds $\Det(I+\partial Y+t\partial X)^{-1}\le C$ for $t>0$ and $\|Y\|_{C^1(\bar\Dsf)^d}$ sufficiently small. The same argument can be applied to the third term on the right hand side of \eqref{eq:approx_f_tri}. Thus, choosing $\eps=t$ and inserting \eqref{eq:approx_f_smooth} with $\varphi=f_\eps$ into \eqref{eq:approx_f_tri}, we conclude
\begin{equation}
\|f^{Y,t}-f^Y\|_{L_2(\Dsf)^d}\le Ct,
\end{equation}
which shows (i).\newline
ad (ii): We start again with a smooth function $\varphi\in C^\infty(\Dsf)^d$. Using Lebesgue's dominated convergence theorem, we deduce from equation \eqref{eq:taylor_approx} that
\begin{equation}\label{eq:pointwise_convergence}
\frac{\varphi^{Y,t}(x)-\varphi^Y(x)}{t}\to \partial \varphi(x)X(x)\quad\text{ for all }x\in\Dsf.
\end{equation}
Since $X$ has compact support, there further holds $|\frac{\varphi^{Y,t}(x)-\varphi^Y(x)}{t}|\le C$ for all $x\in\Dsf$. Now, as $\Dsf$ has finite measure, we can employ dominated convergence to extend the convergence \eqref{eq:pointwise_convergence} to $L_2$. That is, 
\begin{equation}\label{eq:approx_f_H1}
\|\frac{\varphi^{Y,t}-\varphi^Y}{t}- \partial \varphi X\|_{L_2(\Dsf)^d}\to 0.
\end{equation}
For $f\in H^1(\Dsf)^d$ we use a smooth approximation, i.e. for each $\eps>0$ let $f_\eps\in C^\infty(\Dsf)^d$ such that \[\|f_\eps-f\|_{H^1(\Dsf)^d}\le \eps.\] Now a splitting similar to the previous proof entails
\begin{equation}
\begin{split}
    \|\frac{f^{Y,t}-f^Y}{t}-\partial fX\|_{L_2(\Dsf)^d}\le&  \frac{1}{t}\|f^{Y,t}-f_\eps^{Y,t}\|_{L_2(\Dsf)^d}+\frac{1}{t}\|f^{Y}-f_\eps^{Y}\|_{L_2(\Dsf)^d}\\
                                                          & +\|(f_\eps^{Y,t}-f_\eps^Y)/t-\partial f_\eps X\|_{L_2(\Dsf)^d}+\|\partial f_\eps X-\partial f X\|_{L_2(\Dsf)^d}.
\end{split}
\end{equation}
Now choosing $\eps=t^2$ and using the same arguments as in the proof of item (i) to estimate the first second and fourth term gives
\begin{equation}
\|\frac{f^{Y,t}-f^Y}{t}-\partial fX\|_{L_2(\Dsf)^d}\le Ct+\|\frac{f_\eps^{Y,t}-f_\eps^Y}{t}-\partial f_\eps X\|_{L_2(\Dsf)^d}.
\end{equation}
Finally equation \eqref{eq:approx_f_H1} applied to $\varphi=f_\eps$ shows (ii).\newline
The proof of item (iii) and (iv) follows from similar arguments, using the trace inequality $\|\varphi\|_{L_2(\partial \Omega)^d}\le C\|\varphi\|_{H^1(\Dsf)^d}$, for all $\varphi\in H^1(\Dsf)^d$. The higher regularity is used to approximate $\partial g$ in the $H^1$ norm. 
\end{proof}
\begin{lemma}\label{lem:derivatives}
Let $\Omega\subset\Dsf$ Lipschitz and $X \in C^1(\bar\Dsf)^d$ with $\text{supp} (X)\subset (\Dsf\setminus\bar{\omega})$. Furthermore, for $Y\in C^1(\bar\Dsf)^d$ with $\text{supp} (Y)\subset (\Dsf\setminus\bar{\omega})$ and $\|Y\|_{C^1(\bar\Dsf)^d}$, $t>0$ sufficient small recall the notations $\xi_{Y,t}:=\det(\partial F_{Y,t})$, $\xi_{Y}:=\det(\partial F_{Y})$, $\nu_{Y,t}:=\det(\partial F_{Y,t})|(\partial F_{Y,t})^{-\top}n|$, $\nu_{Y}:=\det(\partial F_{Y})|(\partial F_{Y})^{-\top}n|$ and additionally let $\alpha_{Y,t}:=(\partial F_{Y,t})^{-1}$, $\alpha_{Y}:=(\partial F_{Y})^{-1}$. Then the following limits hold.
\begin{itemize}
\item[(i)] $\displaystyle\lim_{\substack{Y\to 0\\ t\searrow 0}}\frac{\xi_{Y,t}-\xi_Y}{t} = \Div(X)$ in $C^0(\Dsf)$,
\item[(ii)] $\displaystyle\lim_{\substack{Y\to 0\\ t\searrow 0}}\frac{\alpha_{Y,t}-\alpha_Y}{t}=-\partial X$ in $C^0(\Dsf)^{d\times d}$,
\item[(iii)] $\displaystyle\lim_{\substack{Y\to 0\\ t\searrow 0}}\frac{\nu_{Y,t}-\nu_Y}{t}=\Div X-(\partial Xn)\cdot n$ in $C^0(\partial \Omega)$.
\end{itemize}
\end{lemma}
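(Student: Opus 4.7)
The common ingredient is the uniform convergence $\partial F_{Y,t} = I + \partial Y + t\partial X \to I$ in $C^0(\bar\Dsf)^{d\times d}$ as $\|Y\|_{C^1(\bar\Dsf)^d} \to 0$ and $t \searrow 0$. For $Y$ and $t$ sufficiently small this confines $\partial F_{Y,t}(x)$ to a fixed compact neighbourhood of $I$ (uniformly in $x$), on which $M \mapsto \det M$ and $M \mapsto M^{-1}$ are smooth with uniform Lipschitz bounds on all derivatives we need. This will be what allows us to upgrade pointwise statements to uniform (i.e.\ $C^0$) convergence jointly in $(Y,t)$.

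For (i), the plan is to apply the integral form of Taylor's theorem to $\phi(M):=\det M$ along the segment $M(s) := \partial F_Y + s t \partial X$, giving
\begin{equation*}
\frac{\xi_{Y,t}-\xi_Y}{t} = \int_0^1 D\phi\bigl(\partial F_Y + s t\partial X\bigr) : \partial X \, ds,
\end{equation*}
together with the identity $D\phi(M):N = \det(M)\operatorname{tr}(M^{-1}N)$ valid in the neighbourhood above. Since the integrand converges uniformly on $\bar\Dsf$ to $\det(I)\operatorname{tr}(\partial X) = \Div X$, the claim follows.

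For (ii), the plan is to apply the resolvent identity $A^{-1}-B^{-1} = A^{-1}(B-A)B^{-1}$ with $A = \partial F_{Y,t}$ and $B = \partial F_Y$, which yields the exact formula
\begin{equation*}
\frac{\alpha_{Y,t}-\alpha_Y}{t} \;=\; -\,\alpha_{Y,t}\,\partial X\,\alpha_Y.
\end{equation*}
Since $\alpha_{Y,t},\alpha_Y\to I$ uniformly on $\bar\Dsf$, the right-hand side converges to $-\partial X$ in $C^0(\Dsf)^{d\times d}$.

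For (iii), the plan is the additive splitting
\begin{equation*}
\frac{\nu_{Y,t}-\nu_Y}{t} \;=\; \frac{\xi_{Y,t}-\xi_Y}{t}\,|\alpha_{Y,t}^\top n|\; +\; \xi_Y\,\frac{|\alpha_{Y,t}^\top n|-|\alpha_Y^\top n|}{t},
\end{equation*}
combined with the factorisation $|v|-|w| = \bigl((v-w)\cdot(v+w)\bigr)/(|v|+|w|)$ applied to $v=\alpha_{Y,t}^\top n$, $w=\alpha_Y^\top n$. Using (i) for the first summand and (ii) for the difference quotient of $\alpha_{Y,t}^\top n$ in the second, and noting that $|\alpha_{Y,t}^\top n|+|\alpha_Y^\top n|\to 2|n|=2$ uniformly on $\partial\Omega$ (so the denominator stays bounded away from zero), the limit is $\Div X + (-\partial X^\top n)\cdot n = \Div X -(\partial X n)\cdot n$. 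The main subtlety is exactly this nonsmooth norm $|\cdot|$: I must justify the factorisation away from the origin and verify that the bound $|\alpha_Y^\top n| \ge 1/2$ holds uniformly for $Y$ small, which follows from the uniform proximity of $\alpha_Y$ to $I$ on $\bar\Dsf$.
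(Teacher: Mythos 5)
Your proposal is correct. For items (i) and (iii) it follows essentially the paper's own path: the paper proves (i) via an asymptotic expansion of $\det(I+\partial Y+t\partial X)$ in $t$ whose first-order coefficient is $\det(I+\partial Y)\operatorname{tr}\bigl((I+\partial Y)^{-1}\partial X\bigr)$ — the same Jacobi-formula content as your integral Taylor remainder, which if anything packages the uniform control of the error more transparently — and proves (iii) by exactly your additive splitting together with the factorisation $|v|-|w|=\bigl((v-w)\cdot(v+w)\bigr)/(|v|+|w|)$. (In fact the paper's write-up of (iii) carries a notational slip, writing $|\partial F_{Y,t}n|$ where $|(\partial F_{Y,t})^{-\top}n|$ is meant; it then invokes item (ii) to produce the sign in $-(\partial Xn)\cdot n$, which only makes sense for the inverse transpose. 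Your version states the correct quantity throughout and additionally records the uniform lower bound on the denominator, which the paper leaves implicit.) The genuine difference is in (ii): the paper expands both $(I+\partial Y)^{-1}$ and $(I+\partial Y+t\partial X)^{-1}$ as Neumann series and subtracts, which requires some index bookkeeping to isolate the $O(1)$ and $O(t)$ terms, whereas your resolvent identity $A^{-1}-B^{-1}=A^{-1}(B-A)B^{-1}$ yields the exact closed form
\begin{equation*}
\frac{\alpha_{Y,t}-\alpha_Y}{t}=-\,\alpha_{Y,t}\,\partial X\,\alpha_Y
\end{equation*}
in one line, from which uniform convergence is immediate once $\alpha_{Y,t},\alpha_Y\to I$ uniformly. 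Your route is shorter and cleaner; the series route buys nothing extra beyond also exhibiting invertibility of $\partial F_{Y,t}$ for small $Y$ and $t$, which you obtain anyway from the uniform proximity of $\partial F_{Y,t}$ to $I$.
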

\begin{proof}
ad (i): Using an asymptotic expansion of $\Det(I+\partial Y+t\partial X)$ with respect to the parameter $t$ yields
\begin{equation}
\Det(I+\partial Y+t\partial X)=\Det(I+\partial Y)+t\Det(I+\partial Y)\text{tr}((I+\partial Y)^{-1}\partial X)+O(t^2;\partial X;\partial Y),
\end{equation}
where the remainder satisfies $\displaystyle\lim_{\substack{Y\to 0\\ t\searrow 0}}\frac{O(t^2;\partial X;\partial Y)}{t}\to 0$ uniformly in $x$.
Hence, we conclude that
\begin{equation}
\lim_{\substack{Y\to 0\\ t\searrow 0}}\frac{\xi_{Y,t}-\xi_Y}{t}=\lim_{\substack{Y\to 0\\ t\searrow 0}}\Det(I+\partial Y)\text{tr}((I+\partial Y)^{-1}\partial X)+\frac{O(t^2;\partial X;\partial Y)}{t}=\text{tr}(\partial X)=\Div(X)
\end{equation}
holds uniformly in $\Dsf$, which shows (i).\newline
ad (ii): First, we expand the terms in a Neumann series: for sufficiently small $\|Y\|_{C^1(\bar\Dsf)^d}$ and $t>0$ there holds
\begin{equation}
(I+\partial Y)^{-1}=\sum_{\ell=0}^\infty (-1)^\ell(\partial Y)^\ell,
\end{equation}
and
\begin{equation}
(I+\partial Y+t\partial X)^{-1}=\sum_{\ell=0}^\infty (-1)^\ell(\partial Y+t\partial X)^\ell=\sum_{\ell=0}^\infty (-1)^\ell\sum_{k=0}^\ell(\partial Y)^kt^{\ell-k}(\partial X)^{\ell-k}.
\end{equation}
Hence, we observe
\begin{align*}
\frac{\alpha_{Y,t}-\alpha_Y}{t}=&\sum_{\ell=0}^\infty (-1)^\ell\sum_{k=0}^{\ell-1}(\partial Y)^kt^{\ell-k-1}(\partial X)^{\ell-k}\\
=&(-\partial X)+\sum_{\ell=2}^\infty (-1)^\ell\sum_{k=0}^{\ell-2}(\partial Y)^kt^{\ell-k-1}(\partial X)^{\ell-k}+\sum_{\ell=2}^\infty (-1)^\ell(\partial Y)^{\ell-1}\\
=&(-\partial X)+t\sum_{\ell=2}^\infty (-1)^\ell\sum_{k=0}^{\ell-2}(\partial Y)^kt^{\ell-k-2}(\partial X)^{\ell-k}+(\partial Y)\sum_{\ell=0}^\infty (-1)^\ell(\partial Y)^{\ell}.
\end{align*}
Since both series on the right hand side converge uniformly in $\Dsf$, passing to the limit $Y\to0$ in $C^1(\bar\Dsf)^d$, $t\searrow 0$ shows
\begin{equation}
\lim_{\substack{Y\to 0\\ t\searrow 0}}\frac{\alpha_{Y,t}-\alpha_Y}{t}=-\partial X\quad\text{ in }C^0(\Dsf)^{d\times d},
\end{equation}
which shows (ii).\newline
ad (iii): Expanding the difference yields
\begin{equation}\label{eq:diff_boundaryform}
\frac{\nu_{Y,t}-\nu_Y}{t}=\frac{\Det(\partial F_{Y,t})-\Det(\partial F_Y)}{t}|\partial F_{Y,t}n|+\Det(\partial F_Y)\frac{|\partial F_{Y,t}n|-|\partial F_{Y}n|}{t}.
\end{equation}
Using item (i) and $|n|=1$, passing to the limit in the first term on the right hand side of \eqref{eq:diff_boundaryform} shows
\begin{equation}
\displaystyle\lim_{\substack{Y\to 0\\ t\searrow 0}}\frac{\Det(\partial F_{Y,t})-\Det(\partial F_Y)}{t}|\partial F_{Y,t}n|=\Div(X),
\end{equation}
uniformly in $x$.
Regarding the limit of the remaining term we observe
\begin{align*}
\Det(\partial F_Y)\frac{|\partial F_{Y,t}n|-|\partial F_{Y}n|}{t}=&\Det(\partial F_Y)\frac{|\partial F_{Y,t}n|^2-|\partial F_{Y}n|^2}{t}\left(|\partial F_{Y,t}n|+|\partial F_{Y}n|\right)^{-1}\\
=&\Det(\partial F_Y)\frac{(\partial F_{Y,t}n-\partial F_{Y}n)\cdot (\partial F_{Y,t}n+\partial F_{Y}n)}{t}\left(|\partial F_{Y,t}n|+|\partial F_{Y}n|\right)^{-1}\\
=&\Det(\partial F_Y)\left(\frac{\partial F_{Y,t}-\partial F_{Y}}{t}n\right)\cdot (\partial F_{Y,t}n+\partial F_{Y}n)\left(|\partial F_{Y,t}n|+|\partial F_{Y}n|\right)^{-1}.
\end{align*}
Now item (ii) yields the uniform limit
\begin{equation}
\displaystyle\lim_{\substack{Y\to 0\\ t\searrow 0}}\Det(\partial F_Y)\frac{|\partial F_{Y,t}n|-|\partial F_{Y}n|}{t}=\left(-\partial Xn\right)\cdot n.
\end{equation}
\end{proof}
We are now able to state the main result of this section, which covers the material derivative of the state equation.
\begin{lemma}\label{lem:material_bound}
Let $\Omega\subset\Dsf$ Lipschitz and $X \in C^1(\bar\Dsf)^d$ with $\text{supp} (X)\subset (\Dsf\setminus\bar{\omega})$. Furthermore, for $Y\in C^1(\bar\Dsf)^d$ with $\text{supp} (Y)\subset (\Dsf\setminus\bar{\omega})$ and $\|Y\|_{C^1(\bar\Dsf)^d}$, $t>0$ sufficient small let $u^{Y,t}\in H^1_\Gamma(\Omega)^d$ be defined in \eqref{eq:perturbed_state_lift}. Then there is a constant $C\in\VR$ independent of $t$ and $Y$ such that
\begin{equation*}
\|u^{Y,t}\|_{H^1(\Omega)^d}\le C.
\end{equation*}
\end{lemma}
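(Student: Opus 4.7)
The plan is to derive the bound by a standard energy estimate applied to the pulled-back formulation \eqref{eq:perturbed_state_lift}, using $\varphi = u^{Y,t}$ as a test function. The key point is that all the geometric coefficients $\xi_{Y,t}$, $\alpha_{Y,t}$, $\nu_{Y,t}$ stay uniformly close to their unperturbed limits $1$, $I$, $1$ when $\|Y\|_{C^1(\bar\Dsf)^d}$ and $t$ are small (this is exactly the content of Lemma \ref{lem:derivatives} and can be read off its proof), so the perturbed bilinear form and right-hand side admit bounds independent of $(Y,t)$.

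The first step is to establish uniform coercivity. I would use the inequality $A M:M \ge 2\mu |M|^2$ for symmetric $M$, which gives
\begin{equation*}
\int_\Omega \xi_{Y,t} A\eps_{Y,t}(u^{Y,t}):\eps_{Y,t}(u^{Y,t})\,dx \ge 2\mu \inf_\Omega \xi_{Y,t} \, \|\eps_{Y,t}(u^{Y,t})\|_{L^2(\Omega)}^2.
\end{equation*}
Since $\xi_{Y,t}\to 1$ and $\alpha_{Y,t}\to I$ uniformly as $(Y,t)\to (0,0)$, we have $\inf_\Omega \xi_{Y,t}\ge \tfrac12$ and $\|\eps_{Y,t}(v)-\eps(v)\|_{L^2}\le \tfrac12 \|\eps(v)\|_{L^2}$ for all $v$, once $\|Y\|_{C^1(\bar\Dsf)^d}$ and $t$ are small enough. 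A triangle inequality then gives $\|\eps_{Y,t}(u^{Y,t})\|_{L^2}\ge \tfrac12 \|\eps(u^{Y,t})\|_{L^2}$, and Korn's inequality on $H^1_\Gamma(\Omega)^d$ (valid since $\Gamma$ has positive surface measure) turns this into a lower bound of the form $c\|u^{Y,t}\|_{H^1(\Omega)}^2$ with $c>0$ independent of $(Y,t)$.

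For the right-hand side I would apply Cauchy--Schwarz together with a change of variables. The volume term is bounded by $\|\xi_{Y,t}\|_{L^\infty} \|f\circ F_{Y,t}\|_{L^2(\Omega)} \|u^{Y,t}\|_{L^2(\Omega)}$, and changing variables back to $\Omega_{Y,t}$ shows $\|f\circ F_{Y,t}\|_{L^2(\Omega)}^2 = \int_{\Omega_{Y,t}} |f|^2 \xi_{Y,t}^{-1}\circ F_{Y,t}^{-1}\,dx \le C\|f\|_{L^2(\Dsf)}^2$. For the boundary term I use the trace inequality $\|u^{Y,t}\|_{L^2(\Gamma^N)}\le C\|u^{Y,t}\|_{H^1(\Omega)}$ and the same uniform bound on $\nu_{Y,t}$, together with $\|g\circ F_{Y,t}\|_{L^2(\Gamma^N)}\le C\|g\|_{H^1(\Dsf)}$ (after a boundary change of variables using the uniform bounds on $\nu_{Y,t}$ from below and above). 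Combining both pieces yields a bound of the form $|\text{rhs}| \le C(\|f\|_{L^2(\Dsf)} + \|g\|_{H^1(\Dsf)})\|u^{Y,t}\|_{H^1(\Omega)}$, independent of $(Y,t)$.

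Putting coercivity and the right-hand side bound together, we obtain $c\|u^{Y,t}\|_{H^1(\Omega)}^2 \le C\|u^{Y,t}\|_{H^1(\Omega)}$, which divides out to the desired uniform bound. The only real subtlety is verifying that the uniform lower bounds on $\xi_{Y,t}$ and $|\partial F_{Y,t}^{-\top}n|$, and the smallness of $\eps_{Y,t}-\eps$, can all be arranged simultaneously; this follows from the uniform convergence statements in Lemma \ref{lem:derivatives} and from the fact that $(I+\partial Y + t\partial X)$ is invertible with uniformly bounded inverse whenever $\|\partial Y\|_\infty + t\|\partial X\|_\infty < 1$.
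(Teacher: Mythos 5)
Your proposal is correct and follows essentially the same route as the paper's proof: test \eqref{eq:perturbed_state_lift} with $\varphi=u^{Y,t}$, use the uniform convergence of the transformation coefficients from Lemma \ref{lem:derivatives} together with H\"older's inequality to bound the right-hand side, and conclude with Korn's inequality. You simply fill in the details the paper leaves implicit (uniform coercivity via $\xi_{Y,t}\to 1$, $\alpha_{Y,t}\to I$, the change-of-variables bounds on $f\circ F_{Y,t}$ and $g\circ F_{Y,t}$), with the only cosmetic caveat that your smallness estimate $\|\eps_{Y,t}(v)-\eps(v)\|_{L^2}\le\tfrac12\|\eps(v)\|_{L^2}$ itself already uses Korn's inequality on $H^1_\Gamma(\Omega)^d$, which you invoke anyway.
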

\begin{proof}
Testing \eqref{eq:perturbed_state_lift} with $\varphi=u^{Y,t}$ yields
\[\int_{\Omega} \xi_{Y,t}A \eps_{Y,t}(u^{Y,t}):\eps_{Y,t}(u^{Y,t})\; dx=\int_{\Omega}\xi_{Y,t} f^{Y,t}\cdot u^{Y,t}\; dx+\int_{\Gamma^N}\nu_{Y,t} g^{Y,t}\cdot u^{Y,t}\;dS.\]
Thus, due to the differentiability results of Lemma \ref{lem:transformations}, Lemma \ref{lem:derivatives} and Hölder's inequality, we deduce
\[\|\eps(u^{Y,t})\|_{L_2(\Omega)^d}^2\le C \|u^{Y,t}\|_{H^1(\Omega)^d}.\]
Now the result follows from Korn's inequality.
\end{proof}
\begin{lemma}\label{lem:material_diff_bound}
With the assumptions of Lemma \ref{lem:material_bound} let $u^{Y,t}$ and $u^Y$ be defined in \eqref{eq:perturbed_state_lift} and \eqref{eq:unperturbed_state_lift}, respectively. Then there is a constant $C\in\VR$ independent of $Y$ and $t$ such that
\begin{equation}\label{eq:aux_lemma2_material}
\|u^{Y,t}-u^Y\|_{H^{1}(\Omega)^d}\le Ct.
\end{equation}
\end{lemma}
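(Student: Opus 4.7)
The plan is to subtract the variational identities \eqref{eq:perturbed_state_lift} and \eqref{eq:unperturbed_state_lift}, rewrite the difference as a coercive problem for $u^{Y,t}-u^Y$ with a right-hand side that is $O(t)$ in the dual norm, and then test with $u^{Y,t}-u^Y$ to conclude via Korn's inequality. Throughout, all constants are uniform in $Y$ close to $0$ in $C^1(\bar{\Dsf})^d$ and in small $t>0$; this uniformity is precisely what Lemmas \ref{lem:transformations} and \ref{lem:derivatives} give me, together with the $H^1$ bound of Lemma \ref{lem:material_bound}.

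First, I would subtract the two equations and isolate the bilinear form applied to $u^{Y,t}-u^Y$ with the perturbed coefficients:
\begin{equation*}
\int_\Omega \xi_{Y,t} A\eps_{Y,t}(u^{Y,t}-u^Y):\eps_{Y,t}(\varphi)\,dx = \mathcal{R}_1(\varphi) + \mathcal{R}_2(\varphi),
\end{equation*}
where $\mathcal{R}_1(\varphi)$ collects the coefficient-difference terms
\begin{equation*}
\mathcal{R}_1(\varphi) = \int_\Omega \bigl(\xi_Y A\eps_Y(u^Y):\eps_Y(\varphi) - \xi_{Y,t} A\eps_{Y,t}(u^Y):\eps_{Y,t}(\varphi)\bigr)\,dx,
\end{equation*}
and $\mathcal{R}_2(\varphi)$ collects the right-hand side differences
\begin{equation*}
\mathcal{R}_2(\varphi) = \int_\Omega\bigl(\xi_{Y,t} f^{Y,t} - \xi_Y f^Y\bigr)\cdot\varphi\,dx + \int_{\Gamma^N}\bigl(\nu_{Y,t} g^{Y,t} - \nu_Y g^Y\bigr)\cdot\varphi\,dS.
\end{equation*}

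Next, I would bound $|\mathcal{R}_1(\varphi)|$ and $|\mathcal{R}_2(\varphi)|$ by $Ct\,\|\varphi\|_{H^1(\Omega)^d}$. For $\mathcal{R}_1$, the differences $\xi_{Y,t}-\xi_Y$ and $\alpha_{Y,t}-\alpha_Y$ are $O(t)$ in $C^0$ by Lemma \ref{lem:derivatives}(i)--(ii), and $\eps_Y(u^Y)$, $\eps_Y(\varphi)$ are bounded in $L^2$ by the $H^1$ bound of Lemma \ref{lem:material_bound} applied at $t=0$. The boundary integrand in $\mathcal{R}_2$ is handled analogously using Lemma \ref{lem:derivatives}(iii), the trace inequality, and Lemma \ref{lem:transformations}(ii) for the $O(t)$ difference $g^{Y,t}-g^Y$; the bulk term uses Lemma \ref{lem:transformations}(i) together with the uniform $L^\infty$ bounds on $\xi_{Y,t},\xi_Y$.

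Finally, I would test with $\varphi = u^{Y,t}-u^Y\in H^1_\Gamma(\Omega)^d$. Since $\xi_{Y,t}\to 1$ uniformly and $\alpha_{Y,t}\to I$ uniformly as $Y\to 0$, $t\searrow 0$, the bilinear form on the left is uniformly coercive: $\xi_{Y,t}\ge 1/2$ and $\eps_{Y,t}(v) = \eps(v) + O(\|Y\|_{C^1}+t)\,\partial v$, so Korn's inequality on $H^1_\Gamma(\Omega)^d$ yields
\begin{equation*}
\int_\Omega \xi_{Y,t} A\eps_{Y,t}(v):\eps_{Y,t}(v)\,dx \ge c\,\|v\|_{H^1(\Omega)^d}^2
\end{equation*}
with $c>0$ independent of $Y,t$ small. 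Combining this with the right-hand side estimate $|\mathcal{R}_1(v)|+|\mathcal{R}_2(v)| \le Ct\,\|v\|_{H^1(\Omega)^d}$ and dividing by $\|u^{Y,t}-u^Y\|_{H^1(\Omega)^d}$ gives the claim.

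The main obstacle is the uniform coercivity step. One must verify that $\eps_{Y,t}(v)$ remains close enough to the genuine symmetric gradient $\eps(v)$ uniformly in $(Y,t)$ for Korn's inequality to pass through without a degenerating constant. This is where the $C^1$-smallness of $Y$ and the uniform convergence $\alpha_{Y,t}\to I$ from Lemma \ref{lem:derivatives}(ii) are essential; the remainder of the proof is then a routine linear-elasticity energy estimate.
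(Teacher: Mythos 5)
Your proposal is correct and follows essentially the same route as the paper: subtract the two pulled-back variational identities, bound the coefficient and data differences by $Ct$ using Lemmas \ref{lem:transformations} and \ref{lem:derivatives} together with the uniform $H^1$ bound of Lemma \ref{lem:material_bound}, then test with $u^{Y,t}-u^Y$ and conclude via uniform coercivity and Korn's inequality. The only (immaterial) difference is bookkeeping: you keep the perturbed form $\xi_{Y,t}A\eps_{Y,t}(\cdot):\eps_{Y,t}(\cdot)$ on the coercive side, whereas the paper's identity \eqref{eq:diff_states} keeps the unperturbed form $\xi_{Y}A\eps_{Y}(\cdot):\eps_{Y}(\cdot)$, which merely regroups the $O(t)$ remainder terms.
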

\begin{proof}
Subtracting equations \eqref{eq:perturbed_state_lift} and \eqref{eq:unperturbed_state_lift} yields
\begin{align}\label{eq:diff_states}
\begin{split}
\int_\Omega \xi_YA \eps_Y(u^{Y,t}-u^Y):\eps_Y(\varphi)\; dx
=&\int_\Omega \xi_{Y,t} A [\eps_Y(u^{Y,t})-\eps_{Y,t}(u^{Y,t})]:\eps_Y(\varphi)\; dx\\
&+\int_\Omega [\xi_Y-\xi_{Y,t}]A\eps_Y(u^{Y,t}):\eps_Y(\varphi)\; dx\\
&+\int_\Omega \xi_{Y,t}A \eps_{Y,t}(u^{Y,t}):[\eps_Y(\varphi)-\eps_{Y,t}(\varphi)]\; dx\\
&+\int_\Omega (\xi_{Y,t}-\xi_Y)f^{Y,t}\cdot\varphi\; dx+\int_\Omega \xi_Y(f^{Y,t}-f^Y)\cdot\varphi\; dx\\
&+\int_{\Gamma^N} (\nu_{Y,t}-\nu_Y)g^{Y,t}\cdot\varphi\; dS+\int_{\Gamma^N} \nu_Y(g^{Y,t}-g^Y)\cdot\varphi\; dS,
\end{split}
\end{align}
for all $\varphi\in H^1_\Gamma(\Omega)^d$. Now testing with $\varphi=u^{Y,t}-u^Y$ and using similar arguments as in the proof of Lemma \ref{lem:material_bound} shows \eqref{eq:aux_lemma2_material}.
\end{proof}
\begin{theorem}\label{thm:material_derivative}
With the assumptions of the previous lemma there holds
\begin{equation}\label{eq:material_strong}
\lim_{\substack{Y\to 0\\ t\searrow 0}}\|\frac{u^{Y,t}-u^Y}{t}-\dot u\|_{H^1(\Omega)^d}=0,
\end{equation}
where $\dot u\in H^1_\Gamma(\Omega)$ is the unique solution to
\begin{align}\label{eq:u_dot}
\begin{split}
\int_\Omega A \eps(\dot u):\eps(\varphi)\; dx=&\frac{1}{2}\int_\Omega A [\partial u\partial X+\partial X^\top\partial u^\top]:\eps(\varphi)\; dx\\
&-\int_\Omega \Div XA\eps(u):\eps(\varphi)\; dx\\
&+\frac{1}{2}\int_\Omega A \eps(u):[\partial \varphi\partial X+\partial X^\top\partial \varphi^\top]\; dx\\
&+\int_\Omega \Div X f\cdot \varphi\; dx+\int_\Omega \partial f X\cdot\varphi\; dx\\
&+\int_{\Gamma^N} (\Div X-(\partial Xn)\cdot n) g\cdot \varphi\; dS+\int_{\Gamma^N} \partial g X\cdot\varphi\; dS,
\end{split}
\end{align}
for all $\varphi\in H^1_\Gamma(\Omega)^d$. 
\end{theorem}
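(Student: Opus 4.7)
The plan is to divide the identity \eqref{eq:diff_states} by $t$ so that the difference quotient $w^{Y,t} := (u^{Y,t}-u^Y)/t$ becomes the unknown of a perturbed variational equation, identify its weak limit as the unique solution $\dot u$ of \eqref{eq:u_dot}, and finally upgrade weak convergence to strong convergence via uniform coercivity. By Lemma \ref{lem:material_diff_bound} the family $\{w^{Y,t}\}$ is uniformly bounded in $H^1_\Gamma(\Omega)^d$, so along any sequence $(Y_n,t_n)\to(0,0)$ one may extract a subsequence with $w^{Y_n,t_n}\rightharpoonup z$ weakly in $H^1_\Gamma(\Omega)^d$. A preliminary observation that I would establish first is the strong convergence $u^{Y,t}\to u$ in $H^1(\Omega)^d$ as $(Y,t)\to(0,0)$, obtained by the same kind of energy estimate as Lemma \ref{lem:material_diff_bound}, now comparing $u^{Y,t}$ directly with the unperturbed state $u$.

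Next I would pass to the limit term by term. On the left-hand side, $\xi_Y\to 1$ and $\alpha_Y\to I$ uniformly by Lemma \ref{lem:derivatives}, so by weak-strong pairing $\int_\Omega \xi_Y A\eps_Y(w^{Y,t}):\eps_Y(\varphi)\,dx\to\int_\Omega A\eps(z):\eps(\varphi)\,dx$ for every fixed $\varphi\in H^1_\Gamma(\Omega)^d$. On the right-hand side, the seven rescaled contributions of \eqref{eq:diff_states} each pair a difference quotient, which converges uniformly for $\xi,\alpha,\nu$ by Lemma \ref{lem:derivatives} or in $L^2$ for $f,g$ by Lemma \ref{lem:transformations}, with a factor that converges strongly in $L^2$. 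For instance, $[\eps_Y(u^{Y,t})-\eps_{Y,t}(u^{Y,t})]/t=\tfrac{1}{2}\bigl[\partial u^{Y,t}(\alpha_Y-\alpha_{Y,t})/t+((\alpha_Y-\alpha_{Y,t})/t)^\top(\partial u^{Y,t})^\top\bigr]$ converges strongly in $L^2$ to $\tfrac{1}{2}[\partial u\,\partial X+\partial X^\top\partial u^\top]$ by Lemma \ref{lem:derivatives}(ii) together with the strong convergence of $u^{Y,t}$; analogous identifications reproduce exactly the right-hand side of \eqref{eq:u_dot}. Hence $z=\dot u$, and uniqueness of this limit yields $w^{Y,t}\rightharpoonup\dot u$ weakly along the entire net.

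For strong convergence I would write the equation satisfied by $e^{Y,t}:=w^{Y,t}-\dot u$ by subtracting \eqref{eq:u_dot} from the rescaled \eqref{eq:diff_states}, moving the discrepancy terms of the form $\int_\Omega(\xi_Y A\eps_Y(\dot u)-A\eps(\dot u)):\eps_Y(\varphi)\,dx$ to the right-hand side, and testing with $\varphi=e^{Y,t}$. All right-hand side terms vanish as $(Y,t)\to 0$, either because the relevant difference quotients converge while $e^{Y,t}$ stays $H^1$-bounded, or because of uniform coefficient convergence paired with the weak convergence $e^{Y,t}\rightharpoonup 0$. Uniform coercivity of $\int_\Omega \xi_Y A\eps_Y(\cdot):\eps_Y(\cdot)\,dx$ for small $\|Y\|_{C^1(\bar\Dsf)^d}$, via Korn's inequality, then forces $\|e^{Y,t}\|_{H^1(\Omega)^d}\to 0$, which is \eqref{eq:material_strong}. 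The main obstacle is the careful bookkeeping required to pass to the limit simultaneously in the seven right-hand side terms and in the coercivity step; the preliminary strong convergence $u^{Y,t}\to u$ in $H^1$ is essential, since it is what allows products of difference quotients and functions of $u^{Y,t}$ to be resolved cleanly in $L^2$.
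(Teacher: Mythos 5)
Your proposal is correct and follows essentially the same route as the paper: uniform boundedness of the difference quotients from Lemma \ref{lem:material_diff_bound}, weak-subsequence extraction and identification of the limit as $\dot u$ by passing to the limit in \eqref{eq:diff_states} divided by $t$ via Lemmas \ref{lem:transformations} and \ref{lem:derivatives}, then strong convergence by subtracting \eqref{eq:u_dot}, testing with the error, and invoking coercivity through Korn's inequality. Your explicit preliminary step $u^{Y,t}\to u$ in $H^1(\Omega)^d$ is a detail the paper leaves implicit, and it is a sound (indeed helpful) addition obtained by the same energy estimates.
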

\begin{proof}
First, consider sequences $t_n\searrow 0$ and $Y_n\in C^1(\bar\Dsf)^d$ with $\text{supp} (Y_n)\subset (\Dsf\setminus\bar{\omega})$, $Y_n\to0$ in $C^1(\bar\Dsf)^d$ starting sufficiently close to $0$. From Lemma \ref{lem:material_diff_bound} we know that $V^{Y_n,t_n}:=\frac{u^{Y_n,t_n}-u^{Y_n}}{t_n}$ is bounded in $H^1(\Omega)^d$ Hence, there exists $V\in H^1_\Gamma(\Omega)^d$ such that, up to a subsequence denoted the same, $V^{Y_n,t_n}\rightharpoonup V$ in $H^1(\Omega)^d$. Now dividing \eqref{eq:diff_states} by $t_n$, using an arbitrary testfunction $\varphi\in H^1_\Gamma(\Omega)^d$ and passing to the limit according to Lemma \ref{lem:derivatives} and Lemma \ref{lem:transformations}, we observe that $V$ satisfies \eqref{eq:u_dot}. By uniqueness we conclude that $V = \dot u$ and thus also that $\frac{u^{Y,t}-u^{Y}}{t}\rightharpoonup \dot u$ in $H^1(\Omega)^d$. Furthermore, subtracting \eqref{eq:diff_states} divided by $t$ and \eqref{eq:u_dot} allows to deduce the strong convergence \eqref{eq:material_strong} with the same arguments used in the proof of Lemma \ref{lem:material_diff_bound}.
\end{proof}
\begin{assumption}\label{ass:material_c1}
There hold the stronger results $u^{Y,t}\in C^1(K)^d$,
\begin{equation}
\lim_{\substack{Y\to 0\\ t\searrow 0}}\|u^{Y,t}-u\|_{C^1(K)^d}=0,
\end{equation}
as well as
\begin{equation}
\lim_{\substack{Y\to 0\\ t\searrow 0}}\|\frac{u^{Y,t}-u^Y}{t}-\dot u\|_{C^1(K)^d}=0,
\end{equation}
where $K:=\overline{\Omega\setminus\omega}$.
\end{assumption}
\begin{remark}
We would like to point out that Assumption \ref{ass:material_c1} is of reasonable nature. The results obtained in \cite{b_CI_2022} and further literature mentioned therein ensure that a similar analysis can be carried out to deduce convergence in $W^{2,p}(\Omega)^d$, $p>d$. However, this only holds true for pure traction or pure Dirichlet problems with sufficient regular data. Unfortunately, it is known that mixed boundary value problems lack regularity in the vicinity of the intersection $\Gamma\cap\Gamma^N$, i.e. the region where different boundary conditions collide. Thus, a similar estimate is not possible for our problem under consideration. Nonetheless, our assumption compensates this lack of regularity by the introduction of $\omega$, which ensures that the problematic region is not considered.
\end{remark}
\section{Shape derivative}\label{sec:shape_derivative}
In this section we are going to compute the first order shape derivative of the penalised objective functional. Due to the difference in complexity, we are going to treat the cost functional and the penalty term separately.\newline
Given a deformation vector field $X \in C^1(\bar\Dsf)^d$ with $\text{supp} (X)\subset (\Dsf\setminus\bar{\omega})$ and $t>0$ sufficiently small, we denote the perturbed identity $F_t:=\text{Id}+tX$ and let $\Omega_t:=F_t(\Omega)$. We refer to \cite{b_DEZO_2011a,b_HEPI_2005a,b_SOZO_1992a} for basic results and definitions regarding the shape derivative. Let us note that in the literature the term \emph{shape derivative} is not used uniformly. Usually the shape derivative is required to be linear with respect to the vector field $X$, which is in our problem not the case. However, to simplify the terminology we will use the term \emph{shape derivative} in the sense of the following definition. 
\begin{definition}
With the previous notation we define the first order shape derivative of the functional $J:\mathcal A\to\VR$ in direction $X \in C^1(\bar\Dsf)^d$ with $\text{supp} (X)\subset (\Dsf\setminus\bar{\omega})$ as
\begin{equation}
D J(\Omega)(X):=\lim_{t\searrow 0}\frac{J(\Omega_t)- J(\Omega)}{t}.
\end{equation}
\end{definition}
For the sake of simplicity, in the following we denote for $t>0$ small the perturbed and unperturbed state variables $u_t:=u_{0,t}$ and $u:=u_0$, where $u_{t,0}$ and $u_0$ are defined by \eqref{eq:perturbed_state} and \eqref{eq:unperturbed_state}, respectively. Similarly, we denote $u^t:=u_t\circ F_t$.\newline
The next lemma covers the derivative of the smooth cost functional.
\begin{lemma}\label{thm:shape_1}
Let $ J_\text{vol}:\mathcal A\to\VR$ be defined in \eqref{eq:def_jvol} and $X \in C^1(\bar\Dsf)^d$ with $\text{supp} (X)\subset (\Dsf\setminus\bar{\omega})$. Then there holds
\begin{equation}
D J_\text{vol}(\Omega)(X)=2\left(|\Omega|-V\right)\int_\Omega\Div(X)\;dx.
\end{equation}
\end{lemma}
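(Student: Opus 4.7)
The plan is to reduce the computation to a change of variables onto the fixed reference domain $\Omega$, so that the derivative of $J_\text{vol}$ becomes a derivative of an integral over a fixed domain with $t$-dependent integrand. Concretely, using the Jacobian $\xi_t := \det(\partial F_t)$ I would write
\begin{equation*}
|\Omega_t| = \int_{\Omega_t} 1 \, dx = \int_\Omega \xi_t \, dx,
\end{equation*}
so that
\begin{equation*}
J_\text{vol}(\Omega_t) - J_\text{vol}(\Omega) = \bigl(|\Omega_t| - |\Omega|\bigr)\bigl(|\Omega_t| + |\Omega| - 2V\bigr).
\end{equation*}

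Next I would divide by $t$ and pass to the limit as $t \searrow 0$. For the first factor, the identity $|\Omega_t| - |\Omega| = \int_\Omega (\xi_t - 1)\,dx$ together with Lemma \ref{lem:derivatives}(i), specialised to $Y = 0$ (so that $\xi_Y = 1$), gives uniform convergence of $(\xi_t - 1)/t$ to $\Div(X)$ on $\Dsf$. Since $\Omega$ has finite measure, this yields
\begin{equation*}
\lim_{t \searrow 0} \frac{|\Omega_t| - |\Omega|}{t} = \int_\Omega \Div(X)\,dx.
\end{equation*}
The same uniform convergence also implies $|\Omega_t| \to |\Omega|$, so the second factor converges to $2(|\Omega| - V)$. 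Multiplying the two limits yields the claimed identity
\begin{equation*}
DJ_\text{vol}(\Omega)(X) = 2(|\Omega| - V)\int_\Omega \Div(X)\,dx.
\end{equation*}

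There is really no obstacle here beyond bookkeeping: the statement follows from the change-of-variables formula combined with the already-established pointwise expansion of the Jacobian in Lemma \ref{lem:derivatives}(i). In particular, no material derivative machinery and no Danskin-type argument is needed, since $J_\text{vol}$ does not involve the state variable $u_\Omega$. The support condition $\supp(X) \subset \Dsf \setminus \bar\omega$ plays no role in this particular lemma (the formula would hold for any admissible $X$); it is only used to restrict to deformations that keep $\Gamma$ fixed, which matters for the state-dependent term $J_\sigma$.
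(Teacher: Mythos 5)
Your proposal is correct and follows essentially the same route as the paper: a change of variables giving $|\Omega_t|=\int_\Omega\Det(\text{Id}+t\partial X)\,dx$ followed by Lemma \ref{lem:derivatives}(i) with $Y\equiv 0$. You merely spell out the bookkeeping (the difference-of-squares factorisation and the uniform convergence of the difference quotient) that the paper's one-line proof leaves implicit.
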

\begin{proof}
    A change of variables shows $|\Omega_t|=\int_\Omega \Det(\text{Id}+t\partial X)\;dx $
and thus the result follows from Lemma~\ref{lem:derivatives} item (i) with $Y\equiv 0$.
\end{proof}
In order to derive a similar result for the penalty term, we need to following Danskin type result.
\begin{lemma}\label{thm:danskin}
Let $K\subset \VR^d$ compact, $\tau>0$ and $g:[0,\tau]\times K\to\VR$ some function. Additionally, define for $t\in[0,\tau]$ the set $R^t:=\{z\in K|\;\max_{x\in K}g(t,x)=g(t,z)\}$ with the convention $R:=R^0$. Further assume that
\begin{itemize}
\item[(A1)] for all $x\in R$ the partial derivative $\partial_t g(0^+,x)$ exists,
\item[(A2)] for all $t\in[0,\tau]$ the function $x\mapsto g(t,x)$ is upper semicontinuous,
\item[(A3)] for all real nullsequences $(t_n)$, $t_n\searrow 0$ and all sequences $(y_{t_n})$ converging to some $y\in R$ we have
\begin{equation}
\lim_{n\to\infty}\frac{g(t_n,y_{t_n})-g(0,y_{t_n})}{t_n}=\partial_t g(0^+,y).
\end{equation}
\end{itemize}
Then
\begin{equation}
\frac{\partial}{\partial t}\left(\max_{x\in K}g(t,x)\right)_{t=0}=\max_{x\in R}\partial_t g(0^+,x).
\end{equation}
\end{lemma}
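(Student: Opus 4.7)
The strategy is the classical two-sided estimate argument for Danskin-type directional derivatives. Set $\phi(t):=\max_{x\in K}g(t,x)$; I would prove the $\liminf$ and $\limsup$ bounds on $[\phi(t)-\phi(0)]/t$ as $t\searrow 0$ separately agree and equal $\sup_{x\in R}\partial_t g(0^+,x)$, then conclude the supremum is attained and hence a maximum.

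For the lower bound, I would fix an arbitrary $x^*\in R$. Since $\phi(0)=g(0,x^*)$ and $\phi(t)\geq g(t,x^*)$ by definition of the maximum, it follows that
\[
\frac{\phi(t)-\phi(0)}{t}\ \geq\ \frac{g(t,x^*)-g(0,x^*)}{t}.
\]
Applying (A3) along any null sequence $t_n\searrow 0$ with the constant perturbation $y_{t_n}\equiv x^*\in R$ shows that the right-hand side converges to $\partial_t g(0^+,x^*)$. Taking $\liminf$ in $t$ and then supremising over $x^*\in R$ yields
\[
\liminf_{t\searrow 0}\frac{\phi(t)-\phi(0)}{t}\ \geq\ \sup_{x^*\in R}\partial_t g(0^+,x^*).
\]

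For the upper bound, I would fix a null sequence $t_n\searrow 0$. By (A2) the upper semicontinuous function $g(t_n,\cdot)$ attains its maximum on the compact set $K$, so one can select $x_n\in R^{t_n}$. By compactness, a subsequence (not relabeled) satisfies $x_n\to x^*\in K$. The trivial bound $\phi(0)\geq g(0,x_n)$ combined with $\phi(t_n)=g(t_n,x_n)$ produces
\[
\frac{\phi(t_n)-\phi(0)}{t_n}\ \leq\ \frac{g(t_n,x_n)-g(0,x_n)}{t_n}.
\]
Once $x^*\in R$ is established, (A3) applied to the convergent sequence $y_{t_n}:=x_n\to x^*\in R$ identifies the limit of the right-hand side as $\partial_t g(0^+,x^*)\leq\sup_{x\in R}\partial_t g(0^+,x)$. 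Passing to $\limsup$ closes the matching upper estimate, and the supremum, realised at $x^*\in R$, is then a maximum.

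The main obstacle is the verification that the accumulation point $x^*$ of the maximiser sequence $(x_n)$ actually lies in $R$. The lower-bound step delivers lower semicontinuity of $\phi$ at $t=0$, hence $\liminf_n g(t_n,x_n)\geq\phi(0)$, while (A2) at $x^*$ gives $\limsup_n g(0,x_n)\leq g(0,x^*)$, complemented by the always valid $g(0,x_n)\leq\phi(0)$. If $g(0,x^*)<\phi(0)$, then $g(t_n,x_n)-g(0,x_n)$ would stay uniformly bounded away from zero while $t_n\searrow 0$; this is incompatible with the differentiable behaviour near $R$ encoded in (A3) (applied to an auxiliary sequence in $R$ that approximates $x_n$) and the finiteness of the lower-bound asymptotics. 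The resulting contradiction forces $x^*\in R$, which feeds back into the upper estimate and completes the proof.
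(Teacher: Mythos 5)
Your two-sided scheme --- lower bound by fixing a maximiser in $R$, upper bound by selecting maximisers $x_n\in R^{t_n}$ and extracting a convergent subsequence --- is the standard Danskin argument; the paper itself gives no proof here but defers to \cite[Lemma 2.19]{a_ST_2016a}, whose proof has the same skeleton, and your lower bound is complete and correct. The genuine gap is exactly the step you flag yourself: proving that the cluster point $x^*$ of the maximiser sequence lies in $R$. Your contradiction argument does not close it. First, (A3) constrains difference quotients only along sequences that converge to a point \emph{of} $R$; since $R=\{z\in K:\,g(0,z)\ge \phi(0)\}$ is closed by (A2), under your contradiction hypothesis $x^*\notin R$ the points $x_n$ are eventually bounded away from $R$, so no ``auxiliary sequence in $R$ that approximates $x_n$'' exists, and (A3) says nothing whatsoever about $g$ near $x^*$. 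Second, the conclusion you extract --- that $[g(t_n,x_n)-g(0,x_n)]/t_n$ blows up --- contradicts nothing: that quotient is an \emph{upper} bound for $[\phi(t_n)-\phi(0)]/t_n$, and your lower-bound asymptotics only prevent the latter from tending to $-\infty$, not to $+\infty$.

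In fact the membership $x^*\in R$ cannot be derived from (A1)--(A3) as literally stated. Take $K=[0,1]$, $g(t,x)=-x$ for $x\in[0,1)$ and all $t$, $g(0,1)=-1$ and $g(t,1)=\sqrt{t}$ for $t>0$. Each $g(t,\cdot)$ is upper semicontinuous (only an upward jump at the isolated point $1$), $R=\{0\}$ with $\partial_t g(0^+,0)=0$, and (A3) holds because any sequence $y_{t_n}\to 0$ eventually lies in $[0,1)$, where the quotient vanishes identically; yet $\max_{x\in K}g(t,x)=\sqrt{t}$, so the one-sided derivative on the left is $+\infty$ rather than $\max_{x\in R}\partial_t g(0^+,x)=0$. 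So the missing ingredient must be imported from outside (A1)--(A3), e.g.\ joint upper semicontinuity of $(t,x)\mapsto g(t,x)$ at $t=0$: then your own lower semicontinuity estimate $\liminf_n\phi(t_n)\ge\phi(0)$ (which you correctly obtained from (A1)) combines with $\limsup_n g(t_n,x_n)\le g(0,x^*)$ to give $g(0,x^*)\ge\phi(0)$, i.e.\ $x^*\in R$, after which the rest of your argument goes through verbatim. This stronger continuity is available where the paper applies the lemma --- Assumption \ref{ass:material_c1} gives $u^t\to u$ in $C^1(K)^d$, making the relevant $g$ jointly continuous --- but it is not a consequence of the hypotheses as stated, and your proof cannot be completed without it.
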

\begin{proof}
For a proof we refer to \cite[Lemma 2.19]{a_ST_2016a}.
\end{proof}
Furthermore, we can treat the interior smooth part of the penalty term similar to Lemma \ref{thm:shape_1}.
\begin{lemma}\label{lem:shape_inner}
For $x\in K$ and $t\ge 0$ let $J_\sigma^x(\Omega_t):=\sigma_M^2(u_t)(x_t)-\delta$, where $x_t:=F_t(x)$ and $\delta\in\VR$ denotes the stress threshold. Additionally, let $X \in C^1(\bar\Dsf)^d$ with $\text{supp} (X)\subset (\Dsf\setminus\bar{\omega})$. Then there holds
\begin{equation}\label{eq:shape_stress_inner}
D J_\sigma^x(\Omega)(X)=2 B\eps(u)(x):\eps(\dot u)(x)-B[\partial u\partial X+(\partial X)^\top (\partial u)^\top](x):\eps(u)(x),
\end{equation}
with the constant tensor $B$ defined in \eqref{eq:mises_tensor} and $\dot u$ solving \eqref{eq:u_dot}.
\end{lemma}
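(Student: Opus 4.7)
My plan is to pull the pointwise evaluation back to the fixed configuration $\Omega$, differentiate the resulting algebraic expression in $t$, and then invoke Assumption~\ref{ass:material_c1} to justify the pointwise limit at $x\in K$. Using the chain rule $\partial u_t(x_t)=\partial u^t(x)(\partial F_t)^{-1}(x)$, the squared von Mises stress at the moving point becomes
\[
J_\sigma^x(\Omega_t)\;=\;B\eps_t(u^t)(x):\eps_t(u^t)(x)-\delta,
\]
where $\eps_t=\eps_{0,t}$ is the transformed symmetric gradient from Section~\ref{sec:material_derivative} with $Y\equiv 0$, and $\alpha_t:=(\partial F_t)^{-1}$. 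This reduces the task to differentiating a fixed algebraic expression in the two ingredients $u^t$ and $\alpha_t$.

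Exploiting the elementary identity $B(M){:}N=B(N){:}M$ (which is immediate from \eqref{eq:mises_tensor}), I factor the difference as
\[
J_\sigma^x(\Omega_t)-J_\sigma^x(\Omega)\;=\;B\bigl[\eps_t(u^t)-\eps(u)\bigr](x):\bigl[\eps_t(u^t)+\eps(u)\bigr](x).
\]
I then split $\eps_t(u^t)-\eps(u)=\eps_t(u^t-u)+(\eps_t(u)-\eps(u))$ and divide by $t$. For the first summand, Assumption~\ref{ass:material_c1} supplies $(u^t-u)/t\to\dot u$ in $C^1(K)^d$, and Lemma~\ref{lem:derivatives}(ii) with $Y\equiv 0$ gives $\alpha_t\to I$ in $C^0$, so the pointwise value at $x$ tends to $\eps(\dot u)(x)$. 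For the second summand, Lemma~\ref{lem:derivatives}(ii) yields directly the limit $-\tfrac12\bigl(\partial u\,\partial X+\partial X^\top \partial u^\top\bigr)(x)$. Meanwhile $\eps_t(u^t)(x)+\eps(u)(x)\to 2\eps(u)(x)$ by the $C^1(K)^d$-convergence $u^t\to u$ from Assumption~\ref{ass:material_c1}. Assembling these three limits and using the $B$-symmetry once more to transfer the $\partial u\,\partial X+\partial X^\top\partial u^\top$ factor to the right-hand slot of the inner product produces exactly \eqref{eq:shape_stress_inner}.

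The sole genuine obstacle is the pointwise passage to the limit for $\partial u^t(x)$: the $H^1$-convergence provided by Theorem~\ref{thm:material_derivative} is insufficient for evaluation at a single point, and this is precisely what Assumption~\ref{ass:material_c1} is designed to supply. Once the $C^1(K)^d$-convergence of $u^t$ and of the difference quotient is granted, everything else reduces to a product-rule computation and the symmetric bilinear structure of $B$, with no further analytic content.
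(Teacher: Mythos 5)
Your proposal is correct and follows essentially the same route as the paper: pull the pointwise stress back to the fixed domain via $J_\sigma^x(\Omega_t)=B\eps_t(u^t)(x):\eps_t(u^t)(x)-\delta$, differentiate in $t$ using the $C^1(K)$-convergences from Assumption~\ref{ass:material_c1} together with Lemma~\ref{lem:derivatives}(ii) for $\alpha_t=(\partial F_t)^{-1}$, and collect terms via the symmetry $B(M){:}N=B(N){:}M$. Your difference-of-squares factorisation $B[\eps_t(u^t)-\eps(u)]{:}[\eps_t(u^t)+\eps(u)]$ is merely a more explicit arrangement of the product-rule differentiation the paper asserts directly, so it spells out the limit passage in slightly greater detail but contains no new idea.
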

\begin{proof}
We first perform a change of variables to get
\begin{equation}
 J_\sigma^x(\Omega_t)=B\eps_t(u^t)(x):\eps_t(u^t)(x)-\delta. 
\end{equation}
In view of Assumption~\ref{ass:material_c1} we may differentiate pointwise with respect to $t$ and obtain:
\begin{align}
D J_\sigma^x(\Omega)(X)=&-\frac{1}{2}B[\partial u\partial X+(\partial X)^\top (\partial u)^\top]:\eps(u)(x)+B\eps(\dot u)(x):\eps(u)(x)\\
&-B\eps(u)(x):\frac{1}{2}[\partial u\partial X+(\partial X)^\top (\partial u)^\top]+B\eps(u)(x):\eps(\dot u)(x).
\end{align}
By symmetry of the inner product we conclude \eqref{eq:shape_stress_inner}.
\end{proof}
Now we are able to prove the main result regarding the derivative of the penalty term. We will do this in two steps.
\begin{lemma}\label{lem:shape_2}
Let $\tilde{ J_\sigma}:\mathcal A\to\VR$ be defined by
\begin{equation}
\tilde{ J_\sigma}(\Omega):=\max_{x\in K}J_\sigma^x(\Omega)
\end{equation}
and $X \in C^1(\bar\Dsf)^d$ with $\text{supp} (X)\subset (\Dsf\setminus\bar{\omega})$. Then there holds
\begin{equation}
D\tilde{J_\sigma}(\Omega)(X)=\max_{x\in A(u)}2 B\eps(u)(x):\eps(\dot u)(x)-B[\partial u\partial X+(\partial X)^\top (\partial u)^\top](x):\eps(u)(x),
\end{equation}
where $A(u):=\{z\in K|\; \sigma_M^2(u)(z)=\max_{x\in K}\sigma_M^2(u)(x)\}$.
\end{lemma}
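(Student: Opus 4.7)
The plan is to apply the Danskin-type result Lemma \ref{thm:danskin} with the compact set $K=\overline{\Omega\setminus\omega}$ and the scalar function $g(t,x):=J_\sigma^x(\Omega_t)$ defined in Lemma \ref{lem:shape_inner}. The first step is to rewrite $\tilde J_\sigma(\Omega_t)$ as $\max_{x\in K}g(t,x)$. Since $\text{supp}(X)\subset\Dsf\setminus\bar\omega$, for $t>0$ small $F_t$ is a $C^1$-diffeomorphism of $K$ onto $\overline{\Omega_t\setminus\omega}$, so a change of variable in the maximum gives $\max_{y\in\overline{\Omega_t\setminus\omega}}[\sigma_M^2(u_t)(y)-\delta]=\max_{x\in K}[\sigma_M^2(u_t)(F_t(x))-\delta]=\max_{x\in K}g(t,x)$. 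Note that $g(0,x)=\sigma_M^2(u)(x)-\delta$, so the maximiser set $R^0$ of Lemma \ref{thm:danskin} coincides with $A(u)$.

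Next I would verify the three hypotheses of Lemma \ref{thm:danskin}. For (A1) the pointwise $t$-derivative $\partial_t g(0^+,x)=DJ_\sigma^x(\Omega)(X)$ is precisely what Lemma \ref{lem:shape_inner} computes, giving the expression under the maximum in the claim. For (A2), Assumption \ref{ass:material_c1} asserts $u^t\in C^1(K)^d$, so $g(t,\cdot)=B\eps_t(u^t):\eps_t(u^t)-\delta$ is continuous on $K$ and in particular upper semicontinuous.

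The main obstacle is (A3): for any $t_n\searrow 0$ and $y_{t_n}\to y\in A(u)$, I must show $(g(t_n,y_{t_n})-g(0,y_{t_n}))/t_n\to \partial_t g(0^+,y)$. Writing $a_n=\eps_{t_n}(u^{t_n})(y_{t_n})$ and $b_n=\eps(u)(y_{t_n})$, the symmetry of the bilinear form $(M,N)\mapsto BM:N$ gives $g(t_n,y_{t_n})-g(0,y_{t_n})=B(a_n+b_n):(a_n-b_n)$. Then the difference $a_n-b_n$ decomposes into $(\partial u^{t_n}-\partial u)(y_{t_n})$ plus $\partial u^{t_n}(y_{t_n})(\alpha_{t_n}-I)$ and their symmetric counterparts; dividing by $t_n$ and using Lemma \ref{lem:derivatives}(ii) for uniform convergence of $(\alpha_{t_n}-I)/t_n\to-\partial X$ and Assumption \ref{ass:material_c1} for uniform convergence of $(u^{t_n}-u)/t_n\to\dot u$ in $C^1(K)^d$, together with the continuity of $\eps(u)$ at $y$, yields $(a_n-b_n)/t_n\to\eps(\dot u)(y)-\tfrac12[\partial u\,\partial X+\partial X^\top\partial u^\top](y)$ and $a_n+b_n\to2\eps(u)(y)$. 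This is the place where the $C^1$ regularity hypothesis on the material derivative is essential, because $y_{t_n}$ is a moving evaluation point and pointwise convergence would not suffice.

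Combining these limits and using once more that $BM:N=BN:M$, the right-hand side collapses to $2B\eps(u)(y):\eps(\dot u)(y)-B[\partial u\,\partial X+\partial X^\top\partial u^\top](y):\eps(u)(y)=DJ_\sigma^y(\Omega)(X)$, verifying (A3). Lemma \ref{thm:danskin} then gives $D\tilde J_\sigma(\Omega)(X)=\max_{x\in A(u)}\partial_t g(0^+,x)$, which is exactly the stated formula.
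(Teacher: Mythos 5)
Your proposal is correct and follows essentially the same route as the paper: apply the Danskin-type result (Lemma \ref{thm:danskin}) to $g(t,x)=J_\sigma^x(\Omega_t)$ on $K$, verify (A1) via Lemma \ref{lem:shape_inner}, (A2) via the $C^1(K)^d$ regularity of $u^t$, and (A3) via the uniform convergences $\frac{u^t-u}{t}\to\dot u$ in $C^1(K)^d$ and $\frac{\alpha_t-I}{t}\to-\partial X$. The only difference is that you spell out in full the difference-quotient decomposition for (A3) — including the point that uniform convergence is needed because the evaluation point $y_{t_n}$ moves — which the paper compresses into the remark that (A3) ``follows the lines of Lemma \ref{lem:shape_inner}.''
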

\begin{proof}
First note that we can rewrite the functional as
\begin{equation}
\tilde{ J_\sigma}(\Omega_t)=\max_{x\in K} J_\sigma^x(\Omega_t).
\end{equation}
Now our goal is to apply Theorem \ref{thm:danskin} to
\begin{equation}
g:[0,\tau]\times K\to \VR, \quad (t,x)\mapsto  J_\sigma^x(\Omega_t),
\end{equation}
where $\tau>0$ is a sufficiently small constant. In order to apply the theorem, we need to check the assumptions (A1)-(A3). Since $R=A(u)\subset K$, Lemma \ref{lem:shape_inner} show that (A1) is satisfied. To show (A2), note that, due to the Sobolev embedding, $u^t\in C^1(K)^d$ and thus the mapping $x\mapsto g(t,x)$ is continuous for $t\in[0,\tau]$. The proof of assumption (A3) follows the lines of Lemma \ref{lem:shape_inner}, where we further use the uniform convergence $\frac{u^t-u}{t}\to\dot u$ in $C^1$.
\end{proof}
\begin{theorem}\label{thm:shape_2}
Let $ J_\sigma:\mathcal A\to \VR$ be defined  as in \eqref{eq:penalty_term} and $X \in C^1(\bar\Dsf)^d$ with $\text{supp} (X)\subset (\Dsf\setminus\bar{\omega})$. Then there holds:
\begin{equation}
D J_\sigma(\Omega)(X)=\begin{cases}\max_{x\in A(u)} f(x),&\quad\text{ if }\sigma_M^2(u)|_{A(u)}>\delta,\\
\max\{\max_{x\in A(u)}f(x),0\},&\quad\text{ if }\sigma_M^2(u)|_{A(u)}=\delta,\\
0,&\quad\text{ if }\sigma_M^2(u)|_{A(u)}<\delta,
\end{cases}
\end{equation}
where $A(u):=\{z\in K|\; \sigma_M^2(u)(z)=\max_{x\in K}\sigma_M^2(u)(x)\}$ and $f(x):= 2 B\eps(u)(x):\eps(\dot u)(x)-B[\partial u\partial X+(\partial X)^\top (\partial u)^\top](x):\eps(u)(x)$.
\end{theorem}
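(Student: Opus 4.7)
The plan is to reduce the statement to Lemma~\ref{lem:shape_2} by exploiting the fact that $J_\sigma$ is the outer maximum of two scalars, namely
\begin{equation*}
J_\sigma(\Omega)=\max\{\tilde{J_\sigma}(\Omega),0\},
\end{equation*}
where $\tilde{J_\sigma}$ is the functional of Lemma~\ref{lem:shape_2} (recall $J_\sigma^x(\Omega)=\sigma_M^2(u)(x)-\delta$, so $\tilde{J_\sigma}(\Omega)=\max_{x\in K}\sigma_M^2(u)(x)-\delta$). The map $\phi:s\mapsto\max\{s,0\}$ is Lipschitz and one-sidedly directionally differentiable, with $\phi'(s;h)=h$ if $s>0$, $\phi'(s;h)=0$ if $s<0$ and $\phi'(0;h)=\max\{h,0\}$. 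Composition of this fact with Lemma~\ref{lem:shape_2} will give the three cases in the statement, once the composition is justified.

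The first step is to record the first order expansion
\begin{equation*}
\tilde{J_\sigma}(\Omega_t)=\tilde{J_\sigma}(\Omega)+t\,D\tilde{J_\sigma}(\Omega)(X)+o(t),\qquad t\searrow 0,
\end{equation*}
which follows from Lemma~\ref{lem:shape_2}, together with continuity $\tilde{J_\sigma}(\Omega_t)\to\tilde{J_\sigma}(\Omega)$ as $t\searrow 0$. The latter is a consequence of Assumption~\ref{ass:material_c1} and the fact that $x\mapsto \sigma_M^2(u^t)(x)$ converges uniformly on the compact set $K$ to $x\mapsto \sigma_M^2(u)(x)$, whence $\max_K\sigma_M^2(u^t)\to\max_K\sigma_M^2(u)$. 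Note that $\tilde{J_\sigma}(\Omega)>0$, $=0$, or $<0$ corresponds exactly to $\sigma_M^2(u)|_{A(u)}>\delta$, $=\delta$, or $<\delta$, respectively, since $A(u)$ is the argmax set of $\sigma_M^2(u)$ on $K$.

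The second step is the case analysis. If $\tilde{J_\sigma}(\Omega)>0$, continuity yields $\tilde{J_\sigma}(\Omega_t)>0$ for $t$ small, so $J_\sigma(\Omega_t)=\tilde{J_\sigma}(\Omega_t)$ and $DJ_\sigma(\Omega)(X)=D\tilde{J_\sigma}(\Omega)(X)=\max_{x\in A(u)}f(x)$ by Lemma~\ref{lem:shape_2}. If $\tilde{J_\sigma}(\Omega)<0$, similarly $J_\sigma(\Omega_t)=0$ for $t$ small and the derivative vanishes. If $\tilde{J_\sigma}(\Omega)=0$, then $J_\sigma(\Omega)=0$ and the expansion above gives
\begin{equation*}
\frac{J_\sigma(\Omega_t)-J_\sigma(\Omega)}{t}=\frac{\max\{t\,D\tilde{J_\sigma}(\Omega)(X)+o(t),0\}}{t}=\max\Bigl\{D\tilde{J_\sigma}(\Omega)(X)+\tfrac{o(t)}{t},\,0\Bigr\},
\end{equation*}
using positive homogeneity of $\phi$ and $t>0$. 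Passing to the limit $t\searrow 0$ and invoking continuity of $\phi$ yields $DJ_\sigma(\Omega)(X)=\max\{\max_{x\in A(u)}f(x),0\}$.

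The main obstacle is really only the borderline case $\tilde{J_\sigma}(\Omega)=0$, where one must be careful to use the first-order expansion of $\tilde{J_\sigma}(\Omega_t)$ rather than just its one-sided derivative, and then exploit the positive homogeneity of $\max\{\cdot,0\}$ to pull the $1/t$ inside. The other two cases are routine once uniform continuity in $t$ of $x\mapsto\sigma_M^2(u^t)(x)$ (hence continuity of $\tilde{J_\sigma}(\Omega_t)$) has been established via Assumption~\ref{ass:material_c1}.
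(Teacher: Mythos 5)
Your proof is correct, but it takes a different route from the paper. The paper disposes of the outer $\max\{\cdot,0\}$ by a second application of the Danskin-type result (Theorem~\ref{thm:danskin}), this time with the two-valued index set $K=\{1,2\}$ and $g(t,1)=\tilde{J_\sigma}(\Omega_t)$, $g(t,2)=0$; the three cases of the statement then fall out of identifying the active set $R\subset\{1,2\}$ ($R=\{1\}$, $\{1,2\}$, or $\{2\}$ according to the sign of $\tilde{J_\sigma}(\Omega)$), with the verification of hypotheses (A1)--(A3) left as ``similar to the previous lemma.'' You instead prove the needed composition rule by hand: sign persistence of $\tilde{J_\sigma}(\Omega_t)$ for small $t$ in the two strict cases, and in the borderline case $\tilde{J_\sigma}(\Omega)=0$ the first-order expansion $\tilde{J_\sigma}(\Omega_t)=t\,D\tilde{J_\sigma}(\Omega)(X)+o(t)$ combined with positive homogeneity of $s\mapsto\max\{s,0\}$ to pull $1/t$ inside the max. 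Since for a finite index set the Danskin lemma reduces to exactly this case analysis, the two arguments are equivalent in content; yours is the more elementary and self-contained (no hypotheses to check, and the borderline case is fully transparent), while the paper's is more uniform, reusing the same machinery that handled the inner maximum over $K$. Both hinge on Lemma~\ref{lem:shape_2} as the sole analytic input, and your observation that continuity of $t\mapsto\tilde{J_\sigma}(\Omega_t)$ already follows from the existence of the one-sided derivative makes the appeal to Assumption~\ref{ass:material_c1} in that step dispensable. One point you use tacitly (as does the paper) and could state explicitly: the identity $J_\sigma(\Omega_t)=\max\{\tilde{J_\sigma}(\Omega_t),0\}$ requires $F_t(\overline{\Omega\setminus\omega})=\overline{\Omega_t\setminus\omega}$, which holds precisely because $\operatorname{supp}(X)\subset\Dsf\setminus\bar{\omega}$ forces $F_t$ to fix a neighbourhood of $\bar\omega$.
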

\begin{proof}
The proof is another application of Theorem~\ref{thm:danskin} where this time the compact set $K$ denotes a two valued index set. I.e. $K:=\{1,2\}$ and
\begin{equation}
g(t,x):=\begin{cases}\tilde{ J_\sigma}(\Omega_t),&\quad\text{ if }x=1,\\
0,&\quad\text{ if }x=2.\end{cases}
\end{equation}
Now the result follows similarly to the previous lemma, where we note that the active set $R$ of the two valued set correlates to the three cases as follows
\begin{equation}
\begin{cases}
R=\{1\},&\quad\text{ iff }\sigma_M^2(u)|_{A(u)}>\delta,\\
R=\{1,2\},&\quad\text{ iff }\sigma_M^2(u)|_{A(u)}=\delta,\\
R=\{2\},&\quad\text{ iff }\sigma_M^2(u)|_{A(u)}<\delta.
\end{cases}
\end{equation}
\end{proof}
\begin{remark}\label{rem:adjoint_stress}
For the numerical implementation it is convenient to have access to the adjoint states corresponding to the smooth interior functionals $J_\sigma^x$ for $x\in K$. In this case, for a given $x\in K$ the adjoint state $q^x$ should satisfy the equation
\begin{equation}\label{eq:adjoint_direct}
\int_\Omega A\eps(\varphi):\eps(q^x)\;dx=2B\eps(u)(x):\eps(\varphi)(x),\quad\text{ for all }\varphi\in H^1_\Gamma(\Dsf)^d.
\end{equation}
Unfortunately, equation \eqref{eq:adjoint_direct} is not well defined, since point evaluation of the gradient is not possible in $H^1$. As a consequence, we introduce for given $r>0$ the approximation $q^{x,r}\in H^1_\Gamma(\Dsf)^d$
\begin{equation}\label{eq:adjoint_approx}
\int_\Omega A\eps(\varphi):\eps(q^{x,r})\;dx=\frac{2}{|B_r(x)|}\int_{B_r(x)}B\eps(u):\eps(\varphi)\;dx,\quad\text{ for all }\varphi\in H^1_\Gamma(\Dsf)^d.
\end{equation}
Here, $B_r(x)$ denotes the ball with center $x$ and radius $r$.
\end{remark}
\section{Hilbert space setting}\label{sec:hilbert_setting}
In this section we are going to derive optimality conditions and steepest descent directions for the penalised objective functional $ J_\text{vol}+\alpha J_\sigma$. In order to simplify our notation, we are going to reformulate our previous results in a Hilbert space setting. Therefore, let $\mathcal H$ a Hilbert space such that $\mathcal H\subset \{X \in C^1(\bar\Dsf)^d|\; \text{supp} (X)\subset (\Dsf\setminus\bar{\omega})\}$ and point evaluation of the gradient as well as the mapping $X\mapsto D J_\text{vol}(\Omega)(X)$ is continuous. \newline
Since, for $x\in A(u)$ the mappings $X\mapsto D J_\text{vol}(\Omega)(X)$ and $X\mapsto D J_\sigma^x(\Omega)(X)$ are linear, and by our assumptions also continuous, the Riesz representation theorem states that there are elements $\nabla J_\text{vol}\in \mathcal H$ and $\nabla J_\sigma^x\in\mathcal H$, such that
\begin{equation}
D J_\text{vol}(\Omega)(X)=\langle \nabla J_\text{vol},X\rangle_{\mathcal H}\qquad D J_\sigma^x(\Omega)(X)=\langle \nabla J_\sigma^x,X\rangle_{\mathcal H},
\end{equation}
for all $x\in A(u)$ and $X\in \mathcal H$. Using this representation, we are able to rewrite the shape derivative of the penalised functional $J$ as follows
\begin{equation}
DJ(\Omega)(X)=\begin{cases}
\max_{x\in A(u)}\langle \nabla J_\text{vol}+\alpha \nabla J_\sigma^x,X\rangle_{\mathcal H},&\quad\text{ if }\sigma_M^2(u)|_{A(u)}>\delta,\\
\max\{\max_{x\in A(u)}\langle \nabla J_\text{vol}+\alpha \nabla J_\sigma^x,X\rangle_{\mathcal H},\langle \nabla J_\text{vol},X\rangle_{\mathcal H}\},&\quad\text{ if }\sigma_M^2(u)|_{A(u)}=\delta,\\
\langle \nabla J_\text{vol},X\rangle_{\mathcal H},&\quad\text{ if }\sigma_M^2(u)|_{A(u)}<\delta.
\end{cases}
\end{equation}
With the definition
\begin{equation}\label{eq:L_cases}
\mathcal Z:=\begin{cases}
\{\nabla J_\text{vol}+\alpha\nabla J_\sigma^x|\;x\in A(u)\},&\quad\text{ if }\sigma_M^2(u)|_{A(u)}>\delta,\\
\{\nabla J_\text{vol}+\alpha\nabla J_\sigma^x|\;x\in A(u)\}\cup\{\nabla J_\text{vol}\},&\quad\text{ if }\sigma_M^2(u)|_{A(u)}=\delta,\\
\{\nabla J_\text{vol}\},&\quad\text{ if }\sigma_M^2(u)|_{A(u)}<\delta,
\end{cases}
\end{equation}
this further simplifies to
\begin{equation}
DJ(\Omega)(X)=\max_{L\in \mathcal Z}\langle L,X\rangle_{\mathcal H}.
\end{equation}
The next lemma shows that we can extend this result to the closed convex hull of $\mathcal Z$.
\begin{lemma}\label{lem:conv_hull}
Let $\mathcal L:=\overline{\text{conv}(\mathcal Z)}$. Then there holds
\begin{equation}
DJ(\Omega)(X)=\max_{L\in \mathcal L}\langle L,X\rangle_{\mathcal H},
\end{equation}
for all $X\in \mathcal H$.
\end{lemma}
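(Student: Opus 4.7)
My plan is to show the two inequalities $DJ(\Omega)(X) \le \max_{L \in \mathcal{L}}\langle L, X\rangle_{\mathcal{H}}$ and $DJ(\Omega)(X) \ge \sup_{L \in \mathcal{L}}\langle L, X\rangle_{\mathcal{H}}$ separately, using only the identity $DJ(\Omega)(X) = \max_{L \in \mathcal{Z}}\langle L, X\rangle_{\mathcal{H}}$ established just before the lemma, the linearity and continuity of the functional $L \mapsto \langle L, X\rangle_{\mathcal{H}}$, and elementary properties of convex combinations.

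First, since $\mathcal{Z} \subset \text{conv}(\mathcal{Z}) \subset \mathcal{L}$, the inequality $\max_{L \in \mathcal{Z}}\langle L, X\rangle_{\mathcal{H}} \le \sup_{L \in \mathcal{L}}\langle L, X\rangle_{\mathcal{H}}$ is immediate. For the reverse direction, I would fix $X \in \mathcal{H}$ and set $M := \max_{L \in \mathcal{Z}}\langle L, X\rangle_{\mathcal{H}}$. For any element $\tilde L \in \text{conv}(\mathcal{Z})$, written as $\tilde L = \sum_{i=1}^n \lambda_i L_i$ with $L_i \in \mathcal{Z}$, $\lambda_i \ge 0$ and $\sum_i \lambda_i = 1$, linearity of the inner product yields
\begin{equation*}
\langle \tilde L, X\rangle_{\mathcal{H}} = \sum_{i=1}^n \lambda_i \langle L_i, X\rangle_{\mathcal{H}} \le \sum_{i=1}^n \lambda_i M = M,
\end{equation*}
so the bound $\langle \tilde L, X\rangle_{\mathcal{H}} \le M$ holds on the convex hull. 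Next, for $L \in \mathcal{L} = \overline{\text{conv}(\mathcal{Z})}$ I pick a sequence $\tilde L_n \to L$ with $\tilde L_n \in \text{conv}(\mathcal{Z})$; continuity of $\langle \cdot, X\rangle_{\mathcal{H}}$ gives $\langle L, X\rangle_{\mathcal{H}} = \lim_n \langle \tilde L_n, X\rangle_{\mathcal{H}} \le M$. Taking the supremum over $L \in \mathcal{L}$ yields $\sup_{L \in \mathcal{L}}\langle L, X\rangle_{\mathcal{H}} \le M = DJ(\Omega)(X)$.

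Combining both inequalities gives $\sup_{L \in \mathcal{L}}\langle L, X\rangle_{\mathcal{H}} = \max_{L \in \mathcal{Z}}\langle L, X\rangle_{\mathcal{H}} = DJ(\Omega)(X)$. Since the maximum on $\mathcal{Z}$ is attained and $\mathcal{Z} \subset \mathcal{L}$, this supremum is in fact a maximum, justifying the $\max$ in the statement. I do not anticipate any real obstacle: the argument is a standard density-plus-continuity extension and relies on nothing beyond the linearity of the Riesz representation pairing and the closure of $\mathcal{L}$ in $\mathcal{H}$.
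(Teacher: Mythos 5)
Your proof is correct: the convex-combination bound plus the density-and-continuity extension to the closure is exactly the standard argument, and you correctly note at the end that attainment of the supremum on $\mathcal L$ is inherited from the attained maximum on $\mathcal Z\subset\mathcal L$. The paper itself gives no in-text proof but delegates to \cite[Lemma 3.4]{a_ST_2016a}, and your argument is precisely the standard proof of that cited fact, so there is no substantive difference in approach.
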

\begin{proof}
For a proof we refer to \cite[Lemma 3.4]{a_ST_2016a}
\end{proof}
The next Lemma characterises the steepest descent direction.
\begin{lemma}\label{lem:steepest_descent}
Assume $0\notin \mathcal L$. Then there holds
\begin{equation}
\min_{\|X\|_{\mathcal H}=1} DJ(\Omega)(X)=-\|X^\ast\|_{\mathcal H},
\end{equation}
where $X^\ast=-\frac{Z^\ast}{\|Z^\ast\|_{\mathcal H}}$ and $Z^\ast=\argmin_{Z\in\mathcal L} \|Z\|_{\mathcal H}$.
\end{lemma}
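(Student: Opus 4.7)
The strategy is to treat this as a projection problem in $\mathcal{H}$. By construction $\mathcal{L}$ is closed, convex, and does not contain the origin, so the Hilbert projection theorem supplies a unique element of minimal norm $Z^{\ast}\in\mathcal{L}$, characterised by the variational inequality $\langle Z^{\ast}, Z-Z^{\ast}\rangle_{\mathcal H}\ge 0$ for every $Z\in\mathcal{L}$. Equivalently,
\[
\langle Z^{\ast}, Z\rangle_{\mathcal H}\ \ge\ \|Z^{\ast}\|_{\mathcal H}^{2}\quad\text{for all } Z\in\mathcal{L},
\]
with equality at $Z=Z^{\ast}$. This one line is the key ingredient for both directions of the argument.

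Lemma~\ref{lem:conv_hull} supplies $DJ(\Omega)(X)=\max_{L\in\mathcal{L}}\langle L,X\rangle_{\mathcal H}$. For the lower bound, for any $X$ with $\|X\|_{\mathcal H}=1$ I would use $Z^{\ast}$ as a particular test point together with Cauchy--Schwarz,
\[
DJ(\Omega)(X)\ \ge\ \langle Z^{\ast}, X\rangle_{\mathcal H}\ \ge\ -\|Z^{\ast}\|_{\mathcal H}.
\]
For the matching upper bound I would plug in the candidate $X^{\ast}=-Z^{\ast}/\|Z^{\ast}\|_{\mathcal H}$ (well defined since $0\notin\mathcal{L}$ forces $\|Z^{\ast}\|_{\mathcal H}>0$) and compute
\[
DJ(\Omega)(X^{\ast})\ =\ -\frac{1}{\|Z^{\ast}\|_{\mathcal H}}\min_{L\in\mathcal{L}}\langle L, Z^{\ast}\rangle_{\mathcal H}\ =\ -\frac{\|Z^{\ast}\|_{\mathcal H}^{2}}{\|Z^{\ast}\|_{\mathcal H}}\ =\ -\|Z^{\ast}\|_{\mathcal H},
\]
the inner minimum being attained at $L=Z^{\ast}$ by the variational inequality above. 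Combining the two estimates identifies the value and the minimiser simultaneously.

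The only genuine technical point is hidden in the appeal to Lemma~\ref{lem:conv_hull}: one needs the supremum over $\mathcal{L}$ to be attained and one needs the nearest-point projection onto $\mathcal{L}$ to exist. Both facts rest on the same observation, namely that $A(u)\subset K$ is compact and the map $x\mapsto \nabla J_\sigma^x$ is continuous into $\mathcal{H}$, so $\mathcal{Z}$ is norm-bounded; its closed convex hull $\mathcal{L}$ is then bounded, closed and convex, hence weakly compact by reflexivity of $\mathcal{H}$, and the Riesz functional $L\mapsto\langle L,X\rangle_{\mathcal H}$ attains its maximum on it. Once this weak compactness is secured, the rest of the proof is the routine two-sided estimate sketched above.
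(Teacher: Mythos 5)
Your proof is correct, and it is worth noting that the paper itself gives no argument at this point: the proof of Lemma~\ref{lem:steepest_descent} is delegated wholesale to a citation (Lemma~3.3 of the De los Reyes--Dem\textquotedblright{}-style reference \cite{b_DEMALO_2014a}). What you have written is a self-contained reconstruction of exactly the classical argument that such references use: the variational inequality $\langle Z^{\ast}, Z - Z^{\ast}\rangle_{\mathcal H}\ge 0$ characterising the minimal-norm element, Cauchy--Schwarz for the lower bound $DJ(\Omega)(X)\ge\langle Z^\ast,X\rangle_{\mathcal H}\ge-\|Z^\ast\|_{\mathcal H}$, and evaluation at $X^\ast=-Z^\ast/\|Z^\ast\|_{\mathcal H}$, where the variational inequality pins the inner minimum $\min_{L\in\mathcal L}\langle L,Z^\ast\rangle_{\mathcal H}=\|Z^\ast\|_{\mathcal H}^2$ at $L=Z^\ast$. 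So your route is not different in substance from the paper's, merely explicit where the paper is not; that explicitness is a genuine gain for the reader. Two small refinements: existence and uniqueness of $Z^\ast$ follow from the Hilbert projection theorem for \emph{any} nonempty closed convex set, so the boundedness and weak compactness of $\mathcal L$ that you establish are not actually needed for that step (they are relevant only to the attainment of the maximum, which the paper already builds into the statement of Lemma~\ref{lem:conv_hull}); and for the boundedness of $\mathcal Z$ you invoke continuity of $x\mapsto\nabla J_\sigma^x$, whereas uniform boundedness suffices and is easier to justify --- continuity of that map in $\mathcal H$ would require a form of equicontinuity of point evaluations over the unit ball of $\mathcal H$ that the paper's standing assumptions do not obviously supply, since they only guarantee $\sup_{x\in K}|DJ_\sigma^x(\Omega)(X)|\le C\|X\|_{\mathcal H}$ via Assumption~\ref{ass:material_c1} and the continuous embedding into $C^1$.
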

\begin{proof}
A proof can be found in \cite[Lemma 3.3]{b_DEMALO_2014a}
\end{proof}
As a consequence of the previous theorem we can deduce the following optimality condition.
\begin{corollary}
The set $\Omega\in \mathcal A$ is a local minimiser of the penalised objective functional $J$ if and only if $0\in \mathcal L$. In this context $\Omega$ is said to be a local minimum of $J$ iff there is $\rho>0$ such that $J(\Omega)\le J\left((\text{Id}+X)(\Omega)\right)$ for all $X\in \mathcal H$, $\|X\|_{\mathcal H}<\rho$.
\end{corollary}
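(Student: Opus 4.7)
The plan is to deduce both implications from the variational representation $DJ(\Omega)(X)=\max_{L\in\mathcal L}\langle L,X\rangle_{\mathcal H}$ of Lemma~\ref{lem:conv_hull} together with the descent characterisation of Lemma~\ref{lem:steepest_descent}, treating the two directions separately.

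For the necessity direction (''$\Omega$ local minimiser $\Rightarrow 0\in\mathcal L$''), I would fix an arbitrary $X\in\mathcal H$ and choose $t>0$ small enough that $\|tX\|_{\mathcal H}<\rho$. The local minimality inequality $J((\mathrm{Id}+tX)(\Omega))\ge J(\Omega)$, divided by $t$ and passed to the limit $t\searrow 0$ using the definition of the directional derivative, yields $DJ(\Omega)(X)\ge 0$ for every $X\in\mathcal H$. Now if $0\notin\mathcal L$, Lemma~\ref{lem:steepest_descent} produces a unit vector $X^\ast\in\mathcal H$ with $DJ(\Omega)(X^\ast)=-\|Z^\ast\|_{\mathcal H}<0$, which contradicts the nonnegativity just established; therefore $0\in\mathcal L$.

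For the sufficiency direction (''$0\in\mathcal L\Rightarrow \Omega$ local minimiser''), $0\in\mathcal L$ immediately gives $DJ(\Omega)(X)\ge\langle 0,X\rangle_{\mathcal H}=0$ for every $X\in\mathcal H$ via the representation from Lemma~\ref{lem:conv_hull}. To promote this pointwise first-order information into the genuine local optimality demanded by the definition, I would argue by contradiction: assume a descent sequence $X_n\to 0$ in $\mathcal H$ with $J((\mathrm{Id}+X_n)(\Omega))<J(\Omega)$, write $X_n=t_n Y_n$ with $\|Y_n\|_{\mathcal H}=1$ and $t_n\searrow 0$, and extract a weakly convergent subsequence $Y_n\rightharpoonup Y^\ast$ in $\mathcal H$. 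Using the convex (hence weakly lower semicontinuous) structure of $X\mapsto DJ(\Omega)(X)$ as a supremum over the set $\mathcal L$ parametrised by the compact $A(u)$, together with the uniform material derivative expansion from Theorem~\ref{thm:material_derivative} and Assumption~\ref{ass:material_c1}, the strict decrease $J((\mathrm{Id}+t_nY_n)(\Omega))-J(\Omega)<0$ forces $DJ(\Omega)(Y^\ast)\le 0$, and the required contradiction is obtained by inspecting the remainder uniformly on the unit sphere.

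The main obstacle is precisely this second direction: a pointwise first-order condition of the form $DJ(\Omega)(X)\ge 0$ is in general only necessary, not sufficient, for local minimality of a nonsmooth functional, so one cannot simply invoke the inequality direction-by-direction. Making the step rigorous requires a uniform first-order expansion of $J\circ(\mathrm{Id}+\cdot)$ on a neighbourhood of $0$ in $\mathcal H$, and the key structural facts making this possible here are the positive homogeneity and convexity of $DJ(\Omega)(\cdot)$ as a supremum of continuous linear forms indexed by the compact active set $A(u)$ (cf.\ Theorem~\ref{thm:shape_2}), together with the uniform $C^1$-convergence of the perturbed states supplied by Assumption~\ref{ass:material_c1}. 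Any eventual issue with this uniformity would have to be absorbed into an additional regularity hypothesis on the class of admissible perturbations.
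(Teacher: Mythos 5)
Your necessity direction coincides with the paper's own argument: if $0\notin\mathcal L$, Lemma~\ref{lem:steepest_descent} supplies a unit direction $X^\ast$ with $DJ(\Omega)(X^\ast)<0$, which is incompatible with local minimality; the paper leaves the difference-quotient step implicit and you spell it out, which is fine.

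The problem is the sufficiency direction, and here your diagnosis is more lucid than your repair. The paper's entire proof of this direction is precisely the one line you correctly identify as insufficient: from $0\in\mathcal L$ and Lemma~\ref{lem:conv_hull} it deduces $DJ(\Omega)(X)\ge\langle 0,X\rangle_{\mathcal H}=0$ for all $X\in\mathcal H$, i.e.\ first-order stationarity, and stops — silently equating stationarity with local minimality. Your attempt to bridge this gap fails for three concrete reasons. First, the contradiction never materialises: from a descent sequence $X_n=t_nY_n$ your argument yields at best $DJ(\Omega)(Y^\ast)\le 0$, which together with the already-established $DJ(\Omega)(\cdot)\ge 0$ gives $DJ(\Omega)(Y^\ast)=0$ — perfectly consistent with stationarity, hence no contradiction. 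Second, the weak limit of the unit vectors $Y_n$ may be $0$ (in an infinite-dimensional $\mathcal H$ the unit sphere is weakly dense in the closed ball), in which case the conclusion is vacuous; note also that $X\mapsto DJ(\Omega)(X)$, being a supremum of continuous linear forms, is weakly \emph{lower} semicontinuous, which transports inequalities in the direction opposite to the one your argument needs. Third, and decisively, no uniform first-order expansion can close the gap: even for smooth functionals, first-order stationarity does not imply local minimality — take $G(X)=-\|X\|_{\mathcal H}^2$ at $X=0$, which satisfies $D^+G(0,X)=0\ge 0$ for all $X$ yet has a strict local maximum there — so the hoped-for ``additional regularity hypothesis on the admissible perturbations'' cannot exist. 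What is actually provable, and what the paper's two-line proof actually establishes, is the equivalence $0\in\mathcal L\iff DJ(\Omega)(X)\ge0$ for all $X\in\mathcal H$, together with the one-way implication that a local minimiser satisfies $0\in\mathcal L$; the converse implication asserted in the corollary would require extra structure (e.g.\ convexity of $X\mapsto J((\mathrm{Id}+X)(\Omega))$ near $0$) that neither you nor the paper supplies. In short: your proposal reproduces everything the paper genuinely proves, and the step on which it founders is a defect of the statement itself rather than something your compactness argument could have fixed.
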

\begin{proof}
From Lemma \ref{lem:steepest_descent} we know that $DJ(\Omega)(X^\ast)<0$ if $0\notin\mathcal L$. In contrast, if $0\in\mathcal L$ there holds $DJ(\Omega)(X)\ge \langle 0,X\rangle_{\mathcal H}=0$ for all $X\in\mathcal H$.
\end{proof}
\begin{remark}
Expanding the case dependent definition of $\mathcal Z$ in \eqref{eq:L_cases} leads to the following characterisations of the optimality conditions. If the stress constraint is strictly not violated, i.e. if $\sigma_M^2(u)|_K<\delta$, $\mathcal L= \{\nabla J_\text{vol}\}$ and thus the optimality condition reads $\nabla J_\text{vol}=0$. Furthermore, if the stress threshold is surpassed, i.e. if there is $x\in K$ such that $\sigma_M^2(u)(x)>\delta$, the optimality condition yields $0\in \overline{\text{conv}(\mathcal Z)}$. Finally, in the case where the stress constraint is active, i.e. if there is $x\in A(u)$ such that $\sigma_M^2(u)(x)=\delta$, the optimality condition includes the previous cases $\nabla J_\text{vol}=0$ and $0\in \overline{\text{conv}(\mathcal Z)}$.
\end{remark}
\section{Clarke subgradient}\label{sec:clarke}
In this section we are going to connect the set $\mathcal L$ defined in Lemma \ref{lem:conv_hull} to the Clarke subgradient \cite{b_Clarke_1983}.
\begin{definition}
Let $(\mathcal X,\|\cdot\|)$ be a Banach space and $f:\mathcal X\to \VR$ a function. The generalised directional derivative of $f$ at $x\in \mathcal X$ in direction $v\in\mathcal X$ is defined as
\begin{equation}
f^\circ(x;v):=\limsup_{\substack{y\to x\\ t\searrow 0}}\frac{f(y+tv)-f(y)}{t}.
\end{equation}
Furthermore, the Clarke subgradient at $x_0\in \mathcal X$ is defined as the set
\begin{equation}
\partial^\circ f(x_0):=\{x^\ast\in \mathcal X^\ast|\;f^\circ(x_0;v)\ge x^\ast(v),\text{ for all }v\in \mathcal X\}.
\end{equation}
\end{definition}
In order to fit into the framework, we redefine our objective functional as follows.
\begin{definition}
Let $\Omega\in\mathcal A$ fix and $\mathcal H$ denote the Hilbert space defined in the previous section. Furthermore, let $\rho>0$ sufficiently small. The function $G:B_\rho(0)\to\VR$ is defined by
\begin{equation}
G(X):=J((\text{Id}+X)(\Omega)).
\end{equation}
Additionally, we define for $x\in K$ the pointwise function $G^x:B_\rho(0)\to\VR$ by
\begin{equation}
G^x(X):=J_\text{vol}((\text{Id}+X)(\Omega))+\alpha J^x_\sigma((\text{Id}+X)(\Omega)),
\end{equation}
where $\alpha>0$ denotes the penalty parameter.
\end{definition}
\begin{remark}
Note that the functions are indeed well defined for $\|X\|_{\mathcal H}$ sufficiently small. Furthermore, there holds
\begin{equation}
D^+G(0,X)=DJ(\Omega)(X)=\max_{L\in\mathcal L}\langle L,X\rangle_{\mathcal H},
\end{equation}
where $D^+G(0,X)$ denotes the directional derivative of $G$ at $0\in\mathcal H$ in direction $X\in\mathcal H$. A similar result holds true for $D^+G^x(0,X)$.
\end{remark}
Since our definition of the generalised derivative does not require $G$ to be locally Lipschitz, our next lemma shows that the quantity is nonetheless finite.
\begin{lemma}\label{lem:clarke_bound}
Let $Y\in\mathcal H$ sufficiently close to $0\in\mathcal H$ and $t>0$ small. Then there exists $C\in\VR$ such that
\begin{equation}
\left|\frac{G(Y+tX)-G(Y)}{t}\right|\le C.
\end{equation}
\end{lemma}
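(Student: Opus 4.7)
The plan is to split $G$ into the smooth volume contribution and the nonsmooth stress contribution, and to bound the difference quotient of each by a constant that is uniform in small $Y$ and $t$. Throughout, I rely on the fact that $\text{supp}(Y+tX) \subset \Dsf\setminus\bar\omega$, so $F_{Y+tX} = \text{Id}$ on $\omega$, and therefore $\Omega_Z \setminus \omega = F_Z(\Omega\setminus\omega)$ for $Z\in\{Y,Y+tX\}$; this allows me to pull both max functionals back to the fixed reference set $K = \overline{\Omega\setminus\omega}$.

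For the volume part, a change of variables gives $|\Omega_Z| = \int_\Omega \xi_Z \,dx$. Writing $(|\Omega_{Y+tX}|-V)^2 - (|\Omega_Y|-V)^2$ as a product of a sum and a difference, the sum is uniformly bounded for $\|Y\|_{\mathcal H}$ small, while the difference equals $\int_\Omega (\xi_{Y+tX}-\xi_Y)\,dx$, which by Lemma \ref{lem:derivatives}(i) is bounded in absolute value by $Ct$. Dividing by $t$ yields the desired uniform bound for $(J_\text{vol}(\Omega_{Y+tX})-J_\text{vol}(\Omega_Y))/t$.

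For the stress part, I first use that $\mathbb{R}\ni a\mapsto \max(a,0)$ is $1$-Lipschitz, so it suffices to control the difference quotient of $\Omega\mapsto \max_{x\in K'}\sigma_M^2(u_\Omega)(x)-\delta$ on the deformed domains. Pulling back to $K$ and using the change of variables, this max can be rewritten as $\max_{\hat x \in K} B\eps_Z(u^Z)(\hat x):\eps_Z(u^Z)(\hat x)$ for $Z\in\{Y,Y+tX\}$. The inequality $|\max_K \Phi_1 - \max_K \Phi_2| \le \max_K |\Phi_1-\Phi_2|$ reduces matters to a pointwise estimate on $K$. Writing $Ba:a - Bb:b = B(a+b):(a-b)$ (valid since $B$ is symmetric with respect to the Frobenius inner product), the first factor is uniformly bounded in $C^0(K)$ by Assumption \ref{ass:material_c1} and Lemma \ref{lem:derivatives}(ii), and for the second factor I split
\[
\eps_{Y+tX}(u^{Y+tX}) - \eps_Y(u^Y)
= \bigl[\eps_{Y+tX}(u^{Y+tX}) - \eps_Y(u^{Y+tX})\bigr] + \eps_Y(u^{Y+tX}-u^Y).
\]
The first bracket is controlled in $C^0(K)$ by $\|\alpha_{Y+tX}-\alpha_Y\|_{C^0} \cdot \|u^{Y+tX}\|_{C^1(K)}$, which is $O(t)$ thanks to Lemma \ref{lem:derivatives}(ii) and Assumption \ref{ass:material_c1}; the second is bounded by $\|\alpha_Y\|_{C^0}\cdot\|u^{Y+tX}-u^Y\|_{C^1(K)} = O(t)$ by the second limit in Assumption \ref{ass:material_c1}. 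Hence the pointwise difference is $O(t)$ uniformly on $K$, which divided by $t$ gives the required bound.

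The main obstacle is the coupling in $Z$ of the transported strain operator $\eps_Z(\cdot)$ and the state $u^Z$; handling this cleanly requires the $C^1(K)$ convergence supplied by Assumption \ref{ass:material_c1} (which was introduced precisely to bypass the lack of regularity at $\Gamma\cap\Gamma^N$) together with Lemma \ref{lem:material_diff_bound} to supply the crucial $O(t)$ rate in $\|u^{Y+tX}-u^Y\|_{C^1(K)}$. Once these two ingredients are combined with the splitting above, the constant $C$ can be chosen independently of $Y$ in a small $\mathcal H$-neighbourhood of $0$ and of $t$ in a small right-neighbourhood of $0$.
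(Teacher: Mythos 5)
Your proof is correct, and it reaches the bound through a genuinely different mechanism than the paper. The paper's proof does not split off the volume term or the outer $\max\{\cdot,0\}$: it picks maximisers $x^{Y,t}\in A(u^{Y,t})$ and $x^{Y}\in A(u^{Y})$ and uses the Danskin-style two-sided comparison $G(Y+tX)-G(Y)\le G^{x^{Y,t}}(Y+tX)-G^{x^{Y,t}}(Y)$ (and its mirror image with $x^{Y}$ for the reverse sign), then expands the resulting difference quotient of $B\eps_Z(u^Z):\eps_Z(u^Z)$ evaluated at the single point $x^{Y,t}$ into four bilinear terms and bounds each using Assumption \ref{ass:material_c1} and Lemma \ref{lem:derivatives}(ii). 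You instead use the $1$-Lipschitz property of $a\mapsto\max(a,0)$, the sup-norm estimate $\left|\max_K\Phi_1-\max_K\Phi_2\right|\le\max_K\left|\Phi_1-\Phi_2\right|$, and the factorisation $Ba{:}a-Bb{:}b=B(a+b){:}(a-b)$ (correctly justified by the symmetry of $B$) --- the same bilinearity trick in different packaging, but applied uniformly on $K$ rather than at a chosen maximiser. The core ingredients coincide: the pull-back of the max to the fixed set $K=\overline{\Omega\setminus\omega}$, the uniform $C^1(K)$ control supplied by Assumption \ref{ass:material_c1}, and the uniform bounds on the transported geometry from Lemma \ref{lem:derivatives}. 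What your route buys is that it handles explicitly two points the paper's proof silently elides: the volume contribution to the difference quotient, and the regime in which the perturbed maximal stress drops below the threshold $\delta$, where the identity $G(Y+tX)=G^{x^{Y,t}}(Y+tX)$ underlying the paper's first inequality fails and a separate (easy) case distinction would be needed; your Lipschitz argument for $\max(\cdot,0)$ absorbs this case automatically. One small correction: Lemma \ref{lem:material_diff_bound} gives $\|u^{Y,t}-u^{Y}\|_{H^1(\Omega)^d}\le Ct$, not the $C^1(K)$ rate you attribute to it in your closing paragraph; the bound $\|u^{Y+tX}-u^{Y}\|_{C^1(K)^d}=O(t)$ that your argument actually uses follows, as you correctly state earlier, from the second limit in Assumption \ref{ass:material_c1} (a convergent difference quotient is eventually bounded), so that citation is redundant rather than a gap.
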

\begin{proof}
Let $x^{Y,t}\in A(u^{Y,t})$. Then there holds by the maximising property of the set $A(u^{Y,t})$.
\begin{align}\label{eq:clarke_quot_bound}
\frac{G(Y+tX)-G(Y)}{t}\le&\frac{G^{x^{Y,t}}(Y+tX)-G^{x^{Y,t}}(Y)}{t}\\
=&\frac{B\eps_{Y,t}(u^{Y,t})(x^{Y,t}):\eps_{Y,t}(u^{Y,t})(x^{Y,t})-B\eps_{Y}(u^{Y})(x^{Y,t}):\eps_{Y}(u^{Y})(x^{Y,t})}{t}\\
=&B[\frac{\eps_{Y,t}-\eps_Y}{t}](u^{Y,t})(x^{Y,t}):\eps_{Y,t}(u^{Y,t})(x^{Y,t})\\
&+B\eps_Y(\frac{u^{Y,t}-u^Y}{t})(x^{Y,t}):\eps_{Y,t}(u^{Y,t})(x^{Y,t})\\
&+B\eps_Y(u^Y)(x^{Y,t}):[\frac{\eps_{Y,t}-\eps_Y}{t}](u^{Y,t})(x^{Y,t})\\
&+B\eps_Y(u^Y)(x^{Y,t}):\eps_Y(\frac{u^{Y,t}-u^Y}{t})(x^{Y,t}).
\end{align}
Assumption \ref{ass:material_c1} entails $\frac{u^{Y,t}-u^Y}{t}\to \dot u$, $u^{Y,t}\to u$ and $u^Y\to u$ in $C^1(K)$. Furthermore, Lemma \ref{lem:derivatives} item (ii) yields the uniform convergences of $\frac{\alpha_{Y,t}-\alpha_Y}{t}$, $\alpha_{Y,t}$ and $\alpha_Y$. Thus, we conclude that for sufficiently small $Y\in\mathcal H$ and $t>0$ the right-hand side of equation \eqref{eq:clarke_quot_bound} remains bounded. Using $x^Y\in A(u^Y)$ in a similar way shows that
\begin{equation}
\frac{G(Y)-G(Y+tX)}{t}\le C.
\end{equation}
This concludes the proof.
\end{proof}
Next we show that for our objective functional the directional derivative and the generalised directional derivative coincide.
\begin{lemma}\label{lem:equality_derivatives}
Let $X\in\mathcal H$. Then there holds
\begin{equation}
D^+G(0,X)=D^\circ G(0,X).
\end{equation}
\end{lemma}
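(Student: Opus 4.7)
The inequality $D^+G(0,X) \le D^\circ G(0,X)$ is immediate: restricting the $\limsup$ in the definition of $D^\circ G(0,X)$ to the constant sequence $Y\equiv 0$ recovers the one-sided directional derivative. The content of the lemma is therefore the reverse inequality $D^\circ G(0,X) \le D^+G(0,X)$, and the plan is to establish it by selecting a recovery sequence and passing to the limit via the $C^1$ regularity granted by Assumption~\ref{ass:material_c1}.

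Choose sequences $Y_n\in\mathcal H$ with $Y_n\to 0$ and $t_n\searrow 0$ realising the $\limsup$. I will exploit the representation
$$G(Y)=\max_{x\in \hat K}\hat G^x(Y),\qquad \hat K:=K\cup\{\ast\},$$
with $\hat G^x:=G^x$ for $x\in K$ and $\hat G^\ast(Y):=J_\text{vol}((\mathrm{Id}+Y)(\Omega))$; this simply encodes the outer $\max\{\cdot,0\}$ inside $J_\sigma$ together with the Danskin-style reduction from Lemma~\ref{lem:shape_2} and Theorem~\ref{thm:shape_2}. Pick $x_n\in\hat K$ attaining the maximum for $G(Y_n+t_n X)$. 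Because $\hat G^{x_n}(Y_n)\le G(Y_n)$ by the max definition, one obtains
$$\frac{G(Y_n+t_n X)-G(Y_n)}{t_n}\;\le\;\frac{\hat G^{x_n}(Y_n+t_n X)-\hat G^{x_n}(Y_n)}{t_n}.$$
Since $\hat K$ is compact, extract a subsequence with $x_n\to x^\ast\in\hat K$. The right-hand side then converges to $D^+\hat G^{x^\ast}(0,X)$ by re-running the explicit algebraic expansion already carried out in the proof of Lemma~\ref{lem:clarke_bound} and passing to the limit term by term: the $C^1(K)^d$-convergences $u^{Y_n,t_n}\to u$ and $(u^{Y_n,t_n}-u^{Y_n})/t_n\to\dot u$ from Assumption~\ref{ass:material_c1}, combined with the uniform convergences of $\alpha_{Y,t}$, $\xi_{Y,t}$, and their difference quotients from Lemma~\ref{lem:derivatives}, permit pointwise evaluation at the moving base point $x_n$.

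To close the argument I need $x^\ast$ to lie in the active index set figuring in the Danskin representation of $D^+G(0,X)$ obtained in Theorem~\ref{thm:shape_2}, equivalently $\hat G^{x^\ast}(0)=G(0)$. Continuity of $G$ at $0$ (again a consequence of Assumption~\ref{ass:material_c1}) yields $\hat G^{x_n}(Y_n+t_n X)=G(Y_n+t_n X)\to G(0)$, while joint continuity of $(x,Y)\mapsto \hat G^x(Y)$ on $\hat K\times B_\rho(0)$ gives $\hat G^{x_n}(Y_n+t_n X)\to \hat G^{x^\ast}(0)$. Hence $x^\ast$ is admissible and $D^+\hat G^{x^\ast}(0,X)\le D^+G(0,X)$, which chains back to $D^\circ G(0,X)\le D^+G(0,X)$.

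The principal obstacle is the limit passage in the difference quotient at a \emph{moving} base point $Y_n$: one cannot just differentiate $\hat G^{x_n}$ abstractly, and must instead rely on the concrete term-by-term expansion from Lemma~\ref{lem:clarke_bound}, for which the uniform $C^1$ convergence from Assumption~\ref{ass:material_c1} is exactly what is needed to swap the limit in $n$ with pointwise evaluation at $x_n\to x^\ast$. A secondary subtlety is that $x_n$ may oscillate between $K$ and $\{\ast\}$ (most visibly in the borderline case $\sigma_M^2(u)|_{A(u)}=\delta$); this is handled by splitting into two subsequences, with the $\ast$-branch reducing to the smooth derivative computed in Lemma~\ref{thm:shape_1}.
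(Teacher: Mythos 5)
Your argument is correct and is essentially the paper's own proof: the trivial inequality $D^+G(0,X)\le D^\circ G(0,X)$ by taking $Y\equiv 0$; then, for the converse, realise the $\limsup$ along sequences $Y_n\to 0$, $t_n\searrow 0$, majorise the difference quotient of $G$ by that of a pointwise functional at a maximiser $x_n$, extract $x_n\to x^\ast$ by compactness, pass to the limit term by term via the expansion of Lemma \ref{lem:clarke_bound} with Assumption \ref{ass:material_c1} and Lemma \ref{lem:derivatives}, and show the limit point is active so that the Danskin representation of $D^+G(0,X)$ dominates. The one place you genuinely refine the paper is the augmented index set $\hat K=K\cup\{\ast\}$. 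The paper instead picks $x^k\in A(u^{Y_k,t_k})$, a maximiser of the von Mises stress alone, and writes $G(Y_k+t_kX)-G(Y_k)\le G^{x_k}(Y_k+t_kX)-G^{x_k}(Y_k)$; this needs $G(Y_k+t_kX)=G^{x_k}(Y_k+t_kX)$, which fails when $\max_K\sigma_M^2(u^{Y_k,t_k})<\delta$ (then $J_\sigma=0>J_\sigma^{x_k}$), and likewise the paper's final step $\max_{z\in A(u)}D^+G^z(0,X)=D^+G(0,X)$ is literally valid only when $\sigma_M^2|_{A(u)}\ge\delta$, the below-threshold cases being left implicit. Your $\ast$-branch encodes the outer $\max\{\cdot,0\}$ exactly as Theorem \ref{thm:shape_2} does with its two-valued index set, so the majorisation holds in all three cases of \eqref{eq:L_cases} and the chaining is uniform, with the $\ast$-subsequence reducing to Lemma \ref{thm:shape_1} as you say. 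Your identification of the active limit point via continuity of $G$ and of $(x,Y)\mapsto\hat G^x(Y)$ (both consequences of Assumption \ref{ass:material_c1}, noting $u^{Y_n,t_n}=u^{Y_n+t_nX,0}$) is an equally valid substitute for the paper's argument, which instead passes $G^y(Y_k+t_kX)\le G^{x_k}(Y_k+t_kX)$, $y\in K$, to the limit. In short: same method, with a small but genuine patch of the paper's silent case analysis.
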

\begin{proof}
We show this result in two steps. For the inequality ``$\le$'' note that by the definition of the generalised directional derivative there holds
\begin{equation}
D^+G(0,X)=\lim_{t\searrow 0}\frac{G(tX)-G(0)}{t}\le \limsup_{\substack{Y\to 0\\ t\searrow 0}}\frac{G(Y+tX)-G(Y)}{t}=D^\circ G(0,X).
\end{equation}
For the converse note that by Lemma \ref{lem:clarke_bound} the quotient appearing in the generalised direction derivative remains bounded. Hence, there are $Y_k\in\mathcal H$, $t_k>0$ such that $Y_k\to 0$, $t_k\searrow 0$ for $k\to\infty$ and
\begin{equation}
D^\circ G(0,X)=\limsup_{\substack{Y\to 0\\ t\searrow 0}}\frac{G(Y+tX)-G(Y)}{t}=\lim_{k\to\infty}\frac{G(Y_k+t_kX)-G(Y_k)}{t_k}.
\end{equation}
Next we pick for each $k\in\VN$ a point $x^k\in A(u^{Y_k,t_k})\subset K$. Since $K$ is compact, there exists a subsequence, which we denote the same, and $x\in K$ such that $x_k\to x$. This yields
\begin{equation}
D^\circ G(0,X)=\lim_{k\to\infty}\frac{G(Y_k+t_kX)-G(Y_k)}{t_k}\le\lim_{k\to\infty}\frac{G^{x_k}(Y_k+t_kX)-G^{x_k}(Y_k)}{t_k}.
\end{equation}
Now expanding the right-hand side as in \eqref{eq:clarke_quot_bound} and using the same arguments to pass to the limits, shows
\begin{equation}
D^\circ G(0,X)\le\lim_{k\to\infty}\frac{G^{x_k}(Y_k+t_kX)-G^{x_k}(Y_k)}{t_k}=D^+G^x(0,X).
\end{equation}
Next we note that for each $y\in K$ there holds $G^y(Y_k+t_kX)\le G^{x_k}(Y_k+t_kX)$. Hence, passing to the limit $k\to\infty$ yields
\begin{equation}
G^y(0)\le G^x(0),
\end{equation}
i.e. $x\in A(u)$. Hence, putting our observations together, yields
\begin{equation}
D^\circ G(0,X)\le D^+G^x(0,X)\le \max_{z\in A(u)} D^+G^z(0,X)=D^+G(0,X),
\end{equation}
which concludes the proof.
\end{proof}
We are now able to prove the main result of this section
\begin{theorem}
Let $\mathcal L$ be defined in Lemma \ref{lem:conv_hull} and $\partial^\circ G(0)$ denote the Clarke subgradient of $G$ at $0\in\mathcal H$. Then there holds
\begin{equation}
\mathcal L=\partial^\circ G(0).
\end{equation}
\end{theorem}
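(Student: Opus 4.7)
The plan is to reduce the equality $\mathcal L=\partial^\circ G(0)$ to a support-function characterisation of closed convex sets, leveraging the crucial equality $D^+G(0,X)=D^\circ G(0,X)$ already established in Lemma \ref{lem:equality_derivatives} together with the max-representation of $D^+G(0,X)$ from Lemma \ref{lem:conv_hull}.

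First, I would combine the two aforementioned lemmas to conclude that, for every $X\in\mathcal H$,
\begin{equation*}
D^\circ G(0,X)=\max_{L\in\mathcal L}\langle L,X\rangle_{\mathcal H}=:\sigma_{\mathcal L}(X),
\end{equation*}
that is, the generalised directional derivative at the origin is precisely the support function of the set $\mathcal L$. By definition the Clarke subgradient is then
\begin{equation*}
\partial^\circ G(0)=\{M\in\mathcal H:\langle M,X\rangle_{\mathcal H}\le\sigma_{\mathcal L}(X)\text{ for all }X\in\mathcal H\},
\end{equation*}
where we have identified $\mathcal H^\ast\cong\mathcal H$ via the Riesz isomorphism.

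The inclusion $\mathcal L\subseteq\partial^\circ G(0)$ is immediate: any $L\in\mathcal L$ satisfies $\langle L,X\rangle_{\mathcal H}\le\max_{M\in\mathcal L}\langle M,X\rangle_{\mathcal H}=D^\circ G(0,X)$ for every $X$. For the reverse inclusion $\partial^\circ G(0)\subseteq\mathcal L$, I would argue by contradiction using Hahn–Banach separation. Suppose $L_0\in\partial^\circ G(0)\setminus\mathcal L$. Since $\mathcal L=\overline{\operatorname{conv}(\mathcal Z)}$ is a nonempty closed convex subset of the Hilbert space $\mathcal H$ and $\{L_0\}$ is a compact convex set disjoint from $\mathcal L$, the Hahn–Banach separation theorem (in strict form) provides a direction $X_0\in\mathcal H$ and a scalar $\gamma\in\VR$ such that
\begin{equation*}
\langle M,X_0\rangle_{\mathcal H}\le\gamma<\langle L_0,X_0\rangle_{\mathcal H}\qquad\text{for all }M\in\mathcal L.
\end{equation*}
Taking the supremum over $M\in\mathcal L$ yields $D^\circ G(0,X_0)=\sigma_{\mathcal L}(X_0)\le\gamma<\langle L_0,X_0\rangle_{\mathcal H}$, which contradicts $L_0\in\partial^\circ G(0)$. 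Hence $\partial^\circ G(0)\subseteq\mathcal L$, and equality follows.

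I do not expect any serious obstacle: the non-trivial work has already been done in Lemma \ref{lem:equality_derivatives} (identifying the one-sided directional derivative with the generalised one) and in Lemma \ref{lem:conv_hull} (expressing it as a support function). The only point deserving attention is the applicability of the strict separation theorem, which requires $\mathcal L$ to be closed and convex; both properties hold by construction since we took the closed convex hull in the definition of $\mathcal L$. The identification $\mathcal H^\ast\cong\mathcal H$ is standard for Hilbert spaces, so no subtlety arises from the duality pairing.
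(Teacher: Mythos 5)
Your proposal is correct and follows essentially the same route as the paper: both prove $\mathcal L\subseteq\partial^\circ G(0)$ directly from the support-function identity $D^\circ G(0,X)=\max_{L\in\mathcal L}\langle L,X\rangle_{\mathcal H}$ (via Lemmas \ref{lem:conv_hull} and \ref{lem:equality_derivatives}), and obtain the reverse inclusion by Hahn--Banach separation of a point outside the closed convex set $\mathcal L$, identifying the separating functional with an element of $\mathcal H$ through the Riesz isomorphism. Your phrasing as a contradiction versus the paper's direct contrapositive is an immaterial difference, and your added remark that strict separation needs $\mathcal L$ closed, convex and nonempty is a correct (and slightly more careful) observation.
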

\begin{proof}
First note that by the definition of the generalised directional derivative and Lemma \ref{lem:equality_derivatives} there holds for every $X\in \mathcal H$
\begin{equation}
\langle L,X\rangle_{\mathcal H}\le \max_{L\in\mathcal L}\langle L,X\rangle_{\mathcal H}=D^+G(0,X)=D^\circ G(0,X),\quad\text{ for all }L\in\mathcal L.
\end{equation}
Thus, we conclude $\mathcal L\subset \partial^\circ G(0)$. For the converse let $Z\in\mathcal{L}^c$. By the Hahn-Banach separation theorem there exists a linear continuous functional $\mu$ and a constant $c\in\VR$ such that $\mu(Z)>c>\mu(L)$ for all $L\in\mathcal L$. By the Riesz-representation we can identify $\mu$ with an element $\bar{X}\in\mathcal H$. Hence, it follows
\begin{equation}
\langle Z,\bar{X}\rangle_{\mathcal H}>c>\langle L,\bar{X}\rangle_{\mathcal H},\quad\text{ for all }L\in\mathcal L.
\end{equation}
Taking the maximum over $L\in\mathcal L$ now shows
\begin{equation}
\langle Z,\bar{X}\rangle_{\mathcal H}>D^\circ G(0,\bar{X}),
\end{equation}
where we used again that the directional derivative and the generalised directional derivative coincide. Hence, we deduce $Z\notin \partial G^\circ(0)$, which concludes the proof.
\end{proof}
\section{Numerical implementation}\label{sec:numerics}
In this section we give some numerical results. In contrast to the initial problem formulation in Section \ref{sec:introduction}, we add an additional weighting factor to the volume term. This allows a more flexible numerical treatment. Furthermore, one can use this setting to interpret the problem as stress minimisation with a volume constraint.
\paragraph{Shape derivative and shape gradient}
In order to identify shape derivatives with gradients, we use the following approach. Given a shape derivative $D\tilde J(\Omega)(\cdot)$ we define $\nabla \tilde J\in H^1_{\Gamma\cup\Gamma^N}(\Omega)^d$ as the unique solution of the sub-problem
\begin{equation}\label{eq:hilbert_projection}
\int_\Omega \eps(\nabla\tilde J):\eps(X)+B\nabla \tilde J\cdot BX+\rho_\text{low}\nabla \tilde J\cdot X\;dx=D\tilde J(\Omega)(X),\quad\text{ for all }X\in H^1_{\Gamma\cup\Gamma^N}(\Omega)^d.
\end{equation}
Here, $\rho_\text{low}>0$ is a constant and the term
\begin{equation}
B=
\begin{pmatrix}
-\partial_x&\partial_y\\\partial_y&\partial_x
\end{pmatrix}
\end{equation}
incorporates the Cauchy-Riemann equations and thus encourages the solution to be a conformal mapping, which accounts for a good mesh quality. We would like to point out that by definition $\nabla\tilde J\in H^1_{\Gamma\cup\Gamma^N}(\Omega)^d$ and is thus not necessary differentiable. This is definitely a conflict in the analytic setting, where enough regularity is required for the exact derivation of the derivatives. Nevertheless, this should not cause any problems in the numerical realm.
\paragraph{Moving mesh method and mesh quality}
In the upcoming numerical implementation we follow the ``moving mesh'' approach, i.e. in each iteration we deform the initial mesh with respect to a descent direction given by a vector field. Additionally, we check the mesh quality in each iteration and perform a remeshing if it is required. In this context, mesh quality is related to the quotient $\frac{r_i^E}{r_o^E}$ of the inner radius $r_i^E$ and outer radius $r_o^E$ of each triangular element $E$.
\paragraph{Differences to the analytic setting}
In contrast to the problem formulation in Section \ref{sec:introduction}, we discard the subset $\omega$ for the numerical implementation. This can be justified, since we are working in a finite dimensional finite element space. Additionally, we only consider deformation vector fields that vanish on the boundary $\Gamma^N$, where the force is applied, as well. We would like to point out that, contrary to the initial problem formulation, we consider an additional free boundary. Therefore, the assumption $X\equiv0$ on $\Gamma\cup\Gamma^N$ does not fix the whole body. Finally we want to mention that, contrary to the initial problem formulation, the domains occurring in the upcoming numerical examples include corners on the boundary and thus lack some regularity that is required in the analytical setting.
\subsection{max-norm approach}
Our goal is to minimise the functional
\begin{equation}\label{eq:cost_numerics_nonsmooth}
J(\Omega):=\gamma_1 \left(|\Omega|-V\right)^2+\gamma_2\max\{\max_{x\in\bar{\Omega}}\sigma_M^2(u_\Omega)-\delta,0\},
\end{equation}
where $\Omega\in \mathcal A$, $u_\Omega\in H^1_\Gamma(\Omega)^2$ solves \eqref{eq:state}, $\gamma_1,\gamma_2\in\VR^+$ are given weights and $\delta=0$ denotes the stress threshold. Here, $|\Omega|$ denotes the volume of $\Omega$ and the constant $V\in \VR$ is a given target volume.  We observe that for each $x\in\bar{\Omega}$ and $r>0$, plugging in the approximated adjoint variable $q^{x,r}$ defined in \eqref{eq:adjoint_approx} yields
\begin{align}
\begin{split}
D J_\sigma^x(\Omega)(X)\approx&-B[\partial u\partial X+(\partial X)^\top (\partial u)^\top]:\eps(u)(x)\\
&\frac{1}{2}\int_\Omega A [\partial u\partial X+\partial X^\top\partial u^\top]:\eps(q^{x,r})\; dx-\int_\Omega \Div XA\eps(u):\eps(q^{x,r})\; dx\\
&+\frac{1}{2}\int_\Omega A \eps(u):[\partial q^{x,r}\partial X+\partial X^\top(\partial q^{x,r})^\top]\; dx\\
&+\int_\Omega \Div X f\cdot q^{x,r}\; dx+\int_\Omega \partial f X\cdot q^{x,r}\; dx\\
&+\int_{\Gamma^N} (\Div X-(\partial Xn)\cdot n) g\cdot q^{x,r}\; dS+\int_{\Gamma^N} \partial g X\cdot q^{x,r}\; dS.
\end{split}
\end{align}
With this notation we introduce the following optimisation algorithm that builds on the results of Section \ref{sec:hilbert_setting}.
\begin{algorithm}[H]
\caption{Basic gradient algorithm - max-norm approach}
\label{alg:nonsmooth}
\begin{algorithmic}
    \REQUIRE initial shape $\Omega_0\subset \VR^2$, $N_{max}\in \VN$
    \WHILE{$n\le N_{max}$}
    \STATE{choose Hilbert space $\Ch_n= H^1_{\Gamma\cup\Gamma^N}(\Omega_n)^2$}
    
    \STATE{check mesh quality and remesh if required}
    \STATE{compute state $u_n$ by solving state equation \eqref{eq:state} on $\Omega_n$ }
    \STATE{compute active set $A(u_n)=\{x_1,...,x_k\}$}
    \STATE{compute adjoint states $q^{x_i,r}_n$ \eqref{eq:adjoint_approx}, $i\in\{1,...,k\}$ on domain $\Omega_n$ with state $u_n$}
    \STATE{compute gradients $\nabla J_\text{vol}, \nabla J_2^{x_i}\in Ch_n$, for $x_i\in A(u_n)$ }
\STATE{assemble $A\in\VR^{k\times k}$ with $a_{ij}:= (\gamma_1 \nabla J_\text{vol}+\gamma_2\nabla J_\sigma^{x_i},\gamma_1 \nabla J_\text{vol}+\gamma_2\nabla J_\sigma^{x_j})_{\Ch_n}$ for $i,j\in \{1,\ldots, k\}$}
\STATE{solve $\min_{\alpha\in \VR^k} A\alpha\cdot\alpha$ subject to $\sum_{i=1}^k\alpha_i=1$}
\STATE{  $Z_n:= \sum_{i=1}^k \alpha_i v_i$.}
\STATE descent direction $X_n := - Z_n$
\STATE{choose step size $s_n>0$}
\IF{$J((\Id+s_nX_n)(\Omega_n))>J(\Omega_n)$ and no remeshing occurred in the previous step}
\STATE{remesh}
\ELSE
\STATE{deform shape $\Omega_{n+1}:= (\Id+s_nX_n)(\Omega_n)$}
\ENDIF
\ENDWHILE
\end{algorithmic}
\end{algorithm}

\subsection{$p$-norm approach}
In order to validate our method we compare our results to the following $p$-norm approach, which includes a smooth regularisation of the stress term:
\begin{equation}\label{eq:cost_numerics_smooth}
J(\Omega):=\gamma_1 \left(|\Omega|-V\right)^2+\gamma_2\left(\int_\Omega |\sigma_M(u_\Omega)|^p\;dx\right)^\frac{1}{p},
\end{equation}
for $p\ge2$.  The shape derivative of the $J_p(\Omega):=\left(\int_\Omega |\sigma_M(u_\Omega)|^p\;dx\right)^\frac{1}{p}$ for $2\le p<\infty$  proved by various techniques as reported in \cite{c_ST_2015a}. We are omitting the proof here and only state the derivative. 
For $X \in C^1(\bar\Dsf)^d$ with $\text{supp} (X)\subset (\Dsf\setminus\bar{\omega})$ we have
\begin{align}
\begin{split}
DJ_p(\Omega)(X)=&c_\Omega \int_\Omega \Div X|\sigma_M(u_\Omega)|^p\;dx\\
&-c_\Omega \frac{p}{2}\left(B\eps(u):\eps(u)\right)^{\frac{p}{2}-1}B[\partial u\partial X+(\partial X)^\top (\partial u)^\top]:\eps(u)\;dx\\
&+\frac{1}{2}\int_\Omega A [\partial u\partial X+\partial X^\top\partial u^\top]:\eps(q)\; dx-\int_\Omega \Div XA\eps(u):\eps(q)\; dx\\
&+\frac{1}{2}\int_\Omega A \eps(u):[\partial q\partial X+\partial X^\top(\partial q)^\top]\; dx\\
&+\int_\Omega \Div X f\cdot q\; dx+\int_\Omega \partial f X\cdot q\; dx\\
&+\int_{\Gamma^N} (\Div X-(\partial Xn)\cdot n) g\cdot q\; dS+\int_{\Gamma^N} \partial g X\cdot q\; dS.
\end{split}
\end{align}
where $c_\Omega=\frac{1}{p}\left(\int_\Omega |\sigma_M(u_\Omega)|^p\;dx\right)^\frac{1-p}{p}$ and $q\in H^1_\Gamma(\Omega)^d$ solves
\begin{equation}\label{eq:adjoint_pnorm}
\int_\Omega A\eps(\varphi):\eps(q)\;dx=c_\Omega p\left(B\eps(u):\eps(u)\right)^{\frac{p}{2}-1}B\eps(u):\eps(\varphi)\;dx,\quad\text{ for all }\varphi\in H^1_\Gamma(\Omega)^d.
\end{equation}
\begin{algorithm}[H]
\caption{Basic gradient algorithm - $p$-norm approach}
\label{alg:smooth}
\begin{algorithmic}
    \REQUIRE initial shape $\Omega_0\subset \VR^2$, $N_{max}\in \VN$
    \WHILE{$n\le N_{max}$}
    \STATE{choose Hilbert space $\Ch_n= H^1_{\Gamma\cup\Gamma^N}(\Omega_n)^2$}
    \STATE{check mesh quality and remesh if required}
    \STATE{compute state $u_n$ by solving state equation \eqref{eq:state} on $\Omega_n$ }
    \STATE{compute adjoint state $q_n$ by solving adjoint state equation \eqref{eq:adjoint_pnorm} on $\Omega_n$ with state $u_n$}
    \STATE{compute gradients $\nabla J_\text{vol}, \nabla J_p\in\Ch_n$ }
\STATE{descent direction $X_n := -\left(\gamma_1\nabla J_\text{vol}+\gamma_2\nabla J_p\right)$}

\STATE{choose step size $s_n>0$}
\IF{$J((\Id+s_nX_n)(\Omega_n))>J(\Omega_n)$ and no remeshing occurred in the previous step}
\STATE{remesh}
\ELSE
\STATE{deform shape $\Omega_{n+1}:= (\Id+s_nX_n)(\Omega_n)$}
\ENDIF
\ENDWHILE
\end{algorithmic}
\end{algorithm}
\subsection{Numerical results}
We implemented both algorithms in the software NGSolve \cite{a_SC_2014}. In order to achieve comparable results, we normalised the respective stress gradients in each iteration and moved a fixed step size. Throughout our numerical experiments we chose the material parameter as follows: the Poisson ration $\nu=0.3$ and the Young modulus $E=1$. These in turn yield the Lam\'{e} coefficients
\begin{equation}
\lambda=\frac{\nu E}{(1+\nu)(1-2\nu)},\qquad \mu=\frac{E}{2(1+\nu)}.
\end{equation}
Additionally we consider the absence of a volume force, i.e. $f\equiv 0$, which allows a simplified representation of the involved terms.
\subsubsection{L-bracket}
For the first example we considered the L-bracket problem. Here, the initial set is given as
\begin{equation}
\Omega=(0,100)\times (0,100)\setminus [40,100]\times[40,100].
\end{equation}
The bracket is fixed on the upper part of the boundary $\Gamma=[0,40]\times\{100\}$ and the boundary force $g=(0,-3)^\top$ is applied at the corner of the rightmost boundary part, i.e.
\begin{equation}
\Gamma^N=\{100\}\times[35,40]\cup[95,100]\times\{40\}.
\end{equation} 
Furthermore, the target volume was fixed as $70\%$ of the initial volume. The setting is depicted in Figure \ref{fig:lshape_sketch}. The remaining unknowns were chosen as follows: $p=6$, $\gamma_1=10^{-4}$, $\gamma_2=1$, $\rho_\text{low}=10^{-2}$, $r=100$, the initial meshsize $h=5$ and finite elements of order $3$.
\begin{figure}[H]
\begin{center}
\begin{tikzpicture}[every node/.append style={fill=white}]
\fill [
       pattern={Lines[distance=1mm,
                  angle=45,
                  line width=0.25mm
                 ]},
        pattern color=black
       ] (0,5) rectangle (2,5.35);
       \draw [latex-] (0,-0.5) -- (2.5,-0.5) node [below]{\small 100};
       \draw [-latex] (2.5,-0.5) -- (5,-0.5);
       \draw [latex-] (2.31,2.3) -- (3.3,2.3) node [above]{\small 55};
       \draw [-latex] (3.3,2.3) -- (4.2,2.3);
       \draw [latex-] (4.2,2.3) -- (4.6,2.3) node [above]{\small 5};
       \draw [-latex] (4.6,2.3) -- (5,2.3);
       \draw [latex-] (2.3,2.31) -- (2.3,3.5) node [right]{\small 60};
       \draw [-latex] (2.3,3.5) -- (2.3,5);
       \draw [latex-] (-0.5,0) -- (-0.5,2.5) node [left]{\small 100};
       \draw [-latex] (-0.5,2.5) -- (-0.5,5);
       \draw [latex-] (5.5,0) -- (5.5,0.6) node [right]{\small 35};
       \draw [-latex] (5.5,0.6) -- (5.5,1.2);
       \draw [line width=0.35mm, black ] (0,0) -- (5,0) -- (5,2) -- (2,2) -- (2,5) -- (0,5) -- (0,0);
       \draw node at (1,4.7) {$\Gamma$};
       \draw node at (4.6,1.6) {$\Gamma^N$};
       \draw node at (5,4) {$g$};
       \draw node at (1.15,1.15) {\Large$\Omega$};
       \draw [line width=1mm,-latex] (4.65,3.6) -- (4.65,2.9);
       \draw [line width=1mm,-latex] (5,3.6) -- (5,2.9);
       \draw [line width=1mm,-latex] (5.35,3.6) -- (5.35,2.9);
\end{tikzpicture}
\end{center}
\caption{Visualisation of the L-bracket}
\label{fig:lshape_sketch}
\end{figure}
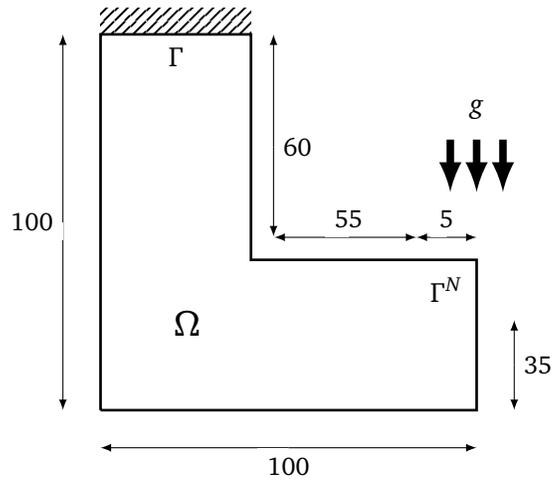

The resulting deformation of the shape are visualised in Figure \ref{fig:L_deformation} and the corresponding evolution of the cost functionals is given in Figure \ref{fig:L_cost}

\begin{figure}[H]

\begin{subfigure}{.249\linewidth}
  \includegraphics[width=\linewidth]{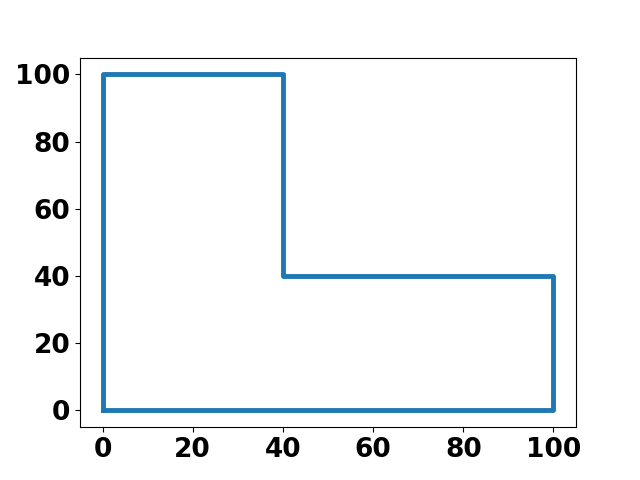}
  \caption{max-norm - It: 0}
  \label{fig:nonsmooth_0it}
\end{subfigure}\hfill 
\begin{subfigure}{.249\linewidth}
  \includegraphics[width=\linewidth]{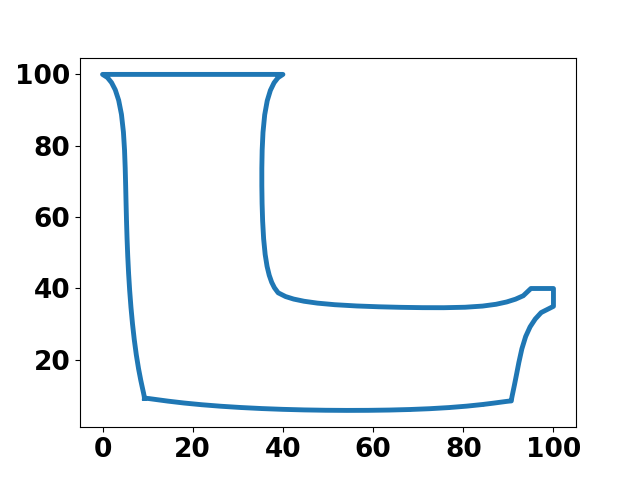}
  \caption{max-norm - It: 10}
  \label{fig:nonsmooth_10it}
\end{subfigure}\hfill 
\begin{subfigure}{.249\linewidth}
  \includegraphics[width=\linewidth]{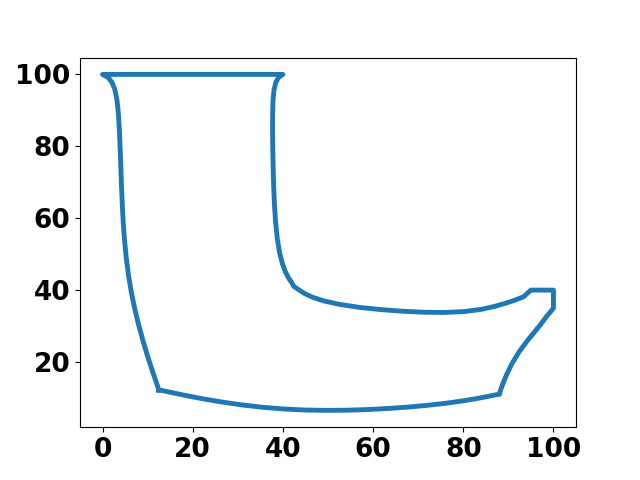}
  \caption{max-norm - It: 100}
  \label{fig:nonsmooth_100it}
\end{subfigure}\hfill 
\begin{subfigure}{.249\linewidth}
  \includegraphics[width=\linewidth]{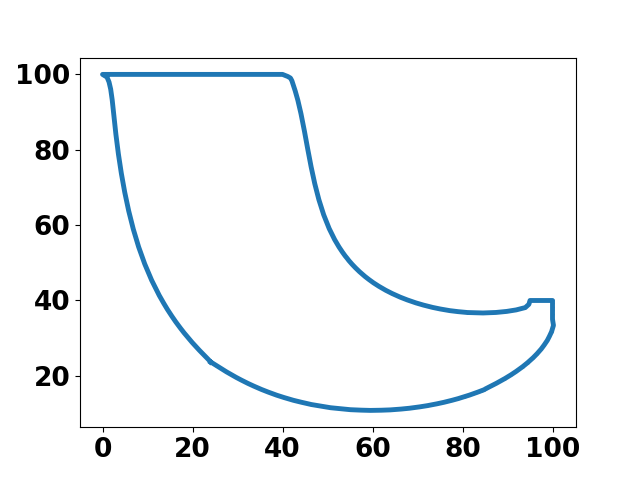}
  \caption{max-norm - It: 560}
  \label{fig:nonsmooth_560it}
\end{subfigure}

\medskip 

\begin{subfigure}{.249\linewidth}
  \includegraphics[width=\linewidth]{nonsmooth_0it.png}
  \caption{$p$-norm - It: 0}
  \label{fig:smooth_0it}
\end{subfigure}\hfill 
\begin{subfigure}{.249\linewidth}
  \includegraphics[width=\linewidth]{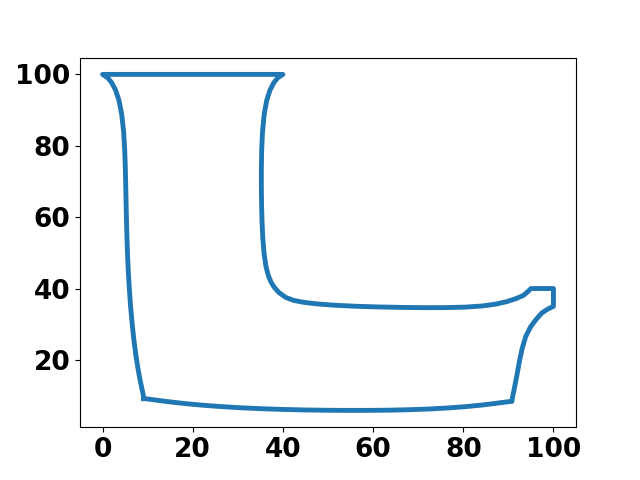}
  \caption{$p$-norm - It: 10}
  \label{fig:smooth_10it}
\end{subfigure}\hfill 
\begin{subfigure}{.249\linewidth}
  \includegraphics[width=\linewidth]{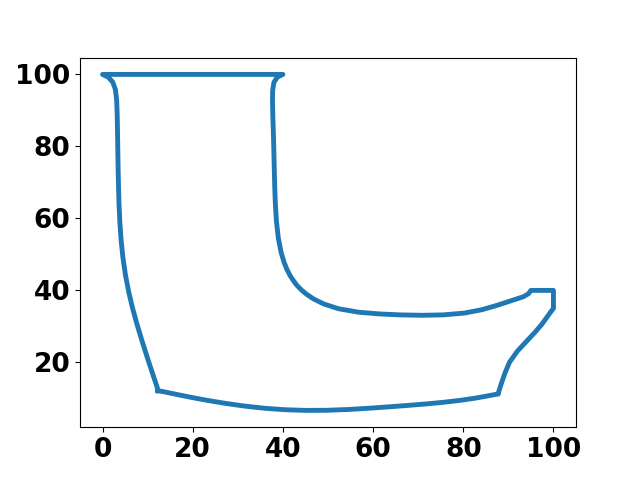}
  \caption{$p$-norm - It: 100}
  \label{fig:smooth_100it}
\end{subfigure}\hfill 
\begin{subfigure}{.249\linewidth}
  \includegraphics[width=\linewidth]{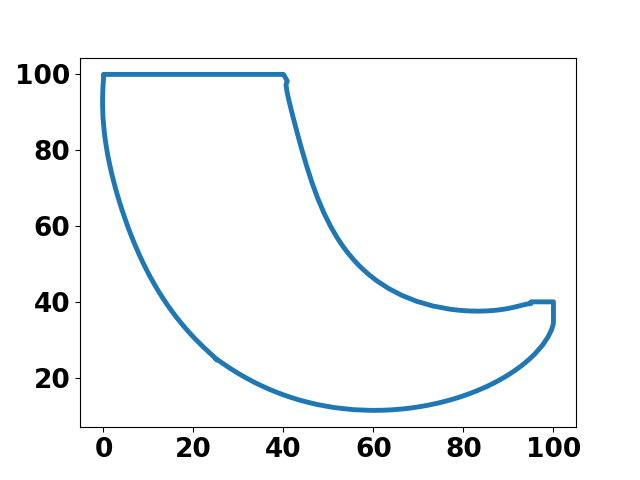}
  \caption{$p$-norm - It: 560}
  \label{fig:smooth_560it}
\end{subfigure}

\caption{Deformation of the L-bracket}
\label{fig:L_deformation}
\end{figure}

\begin{figure}[H]

\begin{subfigure}{.32\linewidth}
  \includegraphics[width=\linewidth]{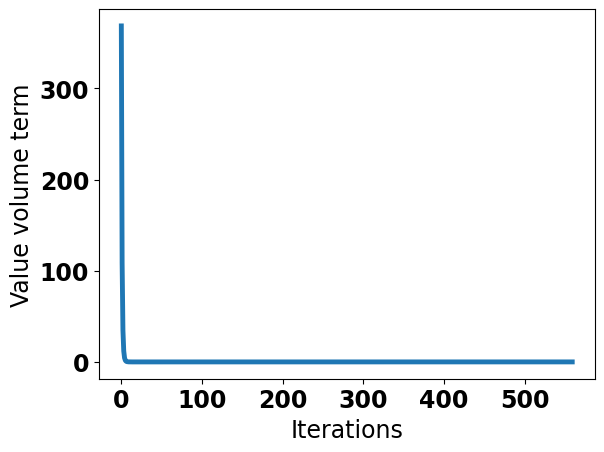}
  \caption{max-norm: volume cost}
  \label{fig:nonsmooth_hook_volcost}
\end{subfigure}\hfill 
\begin{subfigure}{.32\linewidth}
  \includegraphics[width=\linewidth]{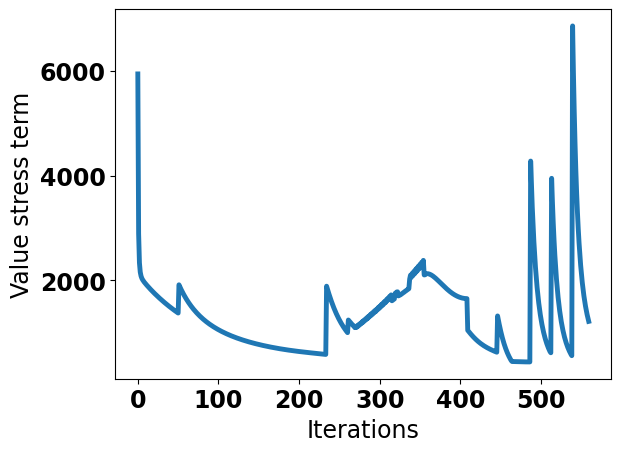}
  \caption{max-norm: stress cost}
  \label{fig:nonsmooth_hook_stresscost}
\end{subfigure}\hfill 
\begin{subfigure}{.32\linewidth}
  \includegraphics[width=\linewidth]{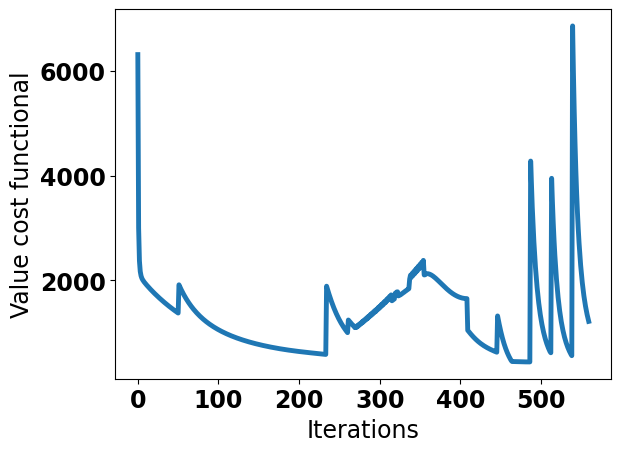}
  \caption{max-norm: total cost}
  \label{fig:nonsmooth_hook_fullcost}
\end{subfigure}

\medskip 

\begin{subfigure}{.32\linewidth}
  \includegraphics[width=\linewidth]{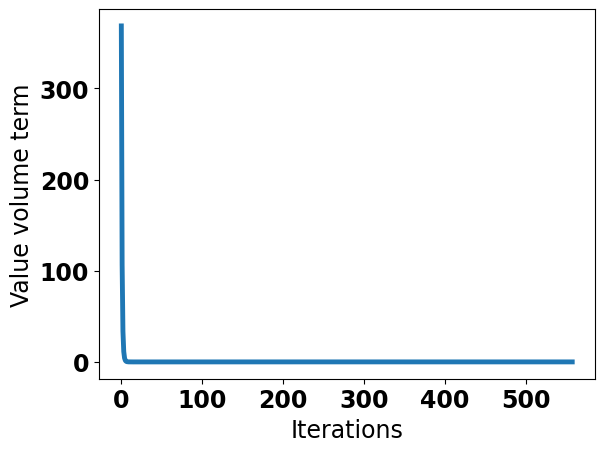}
  \caption{$p$-norm: volume cost}
  \label{fig:smooth_hook_volcost}
\end{subfigure}\hfill 
\begin{subfigure}{.32\linewidth}
  \includegraphics[width=\linewidth]{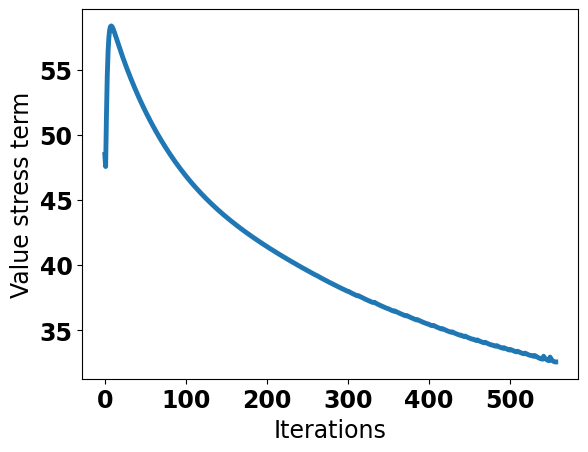}
  \caption{$p$-norm: stress cost}
  \label{fig:smooth_hook_stresscost}
\end{subfigure}\hfill 
\begin{subfigure}{.32\linewidth}
  \includegraphics[width=\linewidth]{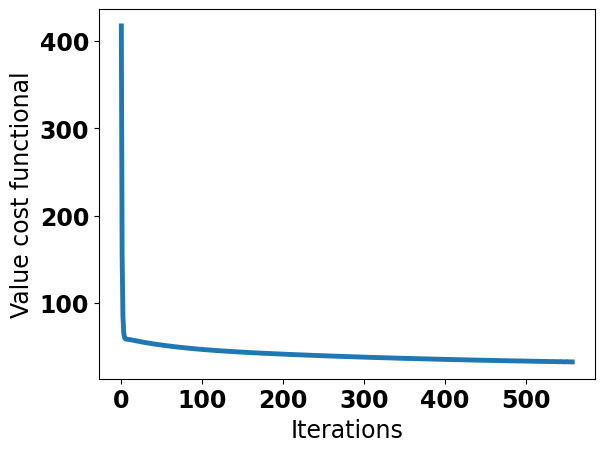}
  \caption{$p$-norm: total cost}
  \label{fig:smooth_hook_fullcost}
\end{subfigure}

\caption{Cost evolution for the L-bracket problem}
\label{fig:L_cost}
\end{figure}
To compare both methods in terms of the pointwise stress optimisation, we deduced the minimal meshsizes $h^\text{min}_{p\text{-norm}}$, $h^\text{min}_\text{max-norm}$ of both algorithms after a given number of iterations. We then chose $h^\text{min}=\min\{h^\text{min}_{p\text{-norm}}, h^\text{min}_\text{max-norm}\}$ and remeshed both final shapes with the minimal meshsize $h^\text{min}$. Finally, we solved the PDE on both domains and compared the resulting maximum von Mises stress. This uniform meshsize is necessary, since the maximum stress highly depends on the underlying meshsize. The results for some iterations are listed in Table \ref{tab:L_comp}.

\begin{table}[H]
\begin{center}
\begin{tabular}{ |c||c|c| } 
 \hline
  & max-norm & $p$-norm \\ 
 \hline\hline
 10 iterations & 1258 & 1466 \\ 
 \hline
 30 iterations & 691 & 771 \\ 
 \hline
  50 iterations & 464 & 615 \\ 
 \hline
  100 iterations & 384 & 391 \\ 
 \hline
   140 iterations & 262 & 330 \\ 
 \hline
\end{tabular}
\end{center}
\caption{Comparison of the maximal von Mises stress $\sigma_M^2$}
\label{tab:L_comp}
\end{table}
Comparing the final results (Figure \ref{fig:nonsmooth_560it}, Figure \ref{fig:smooth_560it}) we observe that visually both algorithms yield similar results. These in fact coincide with the results obtained in  \cite{a_PITO_2018}, where the $p$-norm problem formulation was investigated with a different numerical approach. While both approaches approximate the target volume within a few iterations (Figure \ref{fig:nonsmooth_hook_volcost}, Figure \ref{fig:smooth_hook_volcost}), the stress-cost evolves differently. In the $p$-norm approach, the value of the stress functional shows a steady decrease until the deformation causes some minor artificial fluctuations towards the end (see Figure \ref{fig:smooth_hook_stresscost}). In contrast, the maximum stress curve in the max-norm approach (see Figure \ref{fig:nonsmooth_hook_stresscost}) shows a number of peaks. These are linked to the occurrence of remeshes, as the refined mesh yields higher stress values. Finally, Table \ref{tab:L_comp} shows that both algorithms yield a decrease of the maximal von Mises stress, yet the max-norm approach seems to perform slightly better in this regard.
\subsubsection{Bridge}
For the second example we consider a bridge with vertical load placed in the center of the upper boundary. To be precise, the initial set is given as
\begin{equation}
\Omega=(0,5)\times (0,5)\setminus [1,4]\times[0,2].
\end{equation}
The bridge is fixed at the bottom boundary $\Gamma=\left([0,1]\times\{0\}\right)\cup\left([4,5]\times\{0\}\right)$ and the force $g=(0,-3)^\top$ is applied on $\Gamma^N=[2,3]\times\{5\}$. Again, the target volume was fixed as $70\%$ of the initial volume. A schematic of the setting can be seen in Figure \ref{fig:bridge_sketch}. For this example we chose the regularisation $p=2$. Furthermore, the remaining parameter were set $\gamma_1=10^{-2}$, $\gamma_2=1$, $\rho_\text{low}=10$, $r=7.5$, the initial meshsize $h=0.5$ and finite elements of order $3$.
\begin{figure}[H]
\begin{center}
\begin{tikzpicture}[every node/.append style={fill=white}]
\fill [
       pattern={Lines[distance=1mm,
                  angle=45,
                  line width=0.25mm
                 ]},
        pattern color=black
       ] (0,-0.35) rectangle (1,0);
       \fill [
       pattern={Lines[distance=1mm,
                  angle=45,
                  line width=0.25mm
                 ]},
        pattern color=black
       ] (4,-0.35) rectangle (5,0);
       \draw [latex-] (1,2.35) -- (2.5,2.35) node [above]{\small 3};
       \draw [-latex] (2.5,2.35) -- (4,2.35);
       \draw [latex-] (0,-0.7) -- (0.5,-0.7) node [below]{\small 1};
       \draw [-latex] (0.5,-0.7) -- (1,-0.7);
       \draw [latex-] (4,-0.7) -- (4.5,-0.7) node [below]{\small 1};
       \draw [-latex] (4.5,-0.7) -- (5,-0.7);
       \draw [latex-] (0,4.65) -- (1,4.65) node [below]{\small 2};
       \draw [-latex] (1,4.65) -- (2,4.65);
       \draw [latex-] (3,4.65) -- (4,4.65) node [below]{\small 2};
       \draw [-latex] (4,4.65) -- (5,4.65);
       \draw [latex-] (-0.5,0) -- (-0.5,2.5) node [left]{\small 5};
       \draw [-latex] (-0.5,2.5) -- (-0.5,5);
       \draw [line width=0.35mm, black ] (0,0) -- (1,0) -- (1,2) -- (4,2) -- (4,0) -- (5,0) -- (5,5) -- (0,5) -- (0,0);
       \draw node at (0.5,0.3) {$\Gamma$};
       \draw node at (4.5,0.3) {$\Gamma$};
       \draw node at (2.5,3.5) {\Large$\Omega$};
       \draw node at (2.5,6.2) {$g$};
       \draw node at (2.5,4.6) {$\Gamma^N$};
       \draw [line width=1mm,-latex] (2.15,5.8) -- (2.15,5.1);
       \draw [line width=1mm,-latex] (2.5,5.8) -- (2.5,5.1);
       \draw [line width=1mm,-latex] (2.85,5.8) -- (2.85,5.1);
\end{tikzpicture}
\end{center}
\caption{Visualisation of the bridge}
\label{fig:bridge_sketch}
\end{figure}
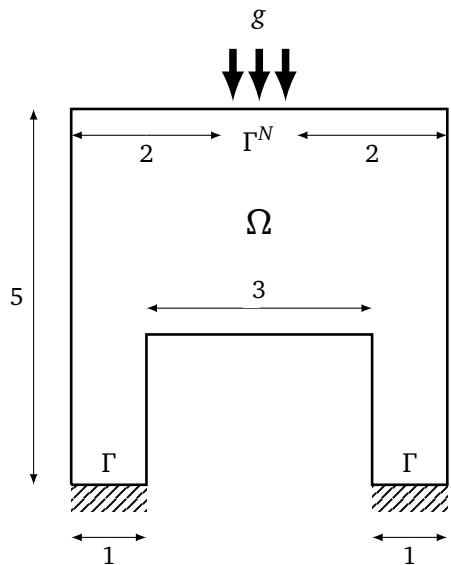

\begin{figure}[H]

\begin{subfigure}{.249\linewidth}
  \includegraphics[width=\linewidth]{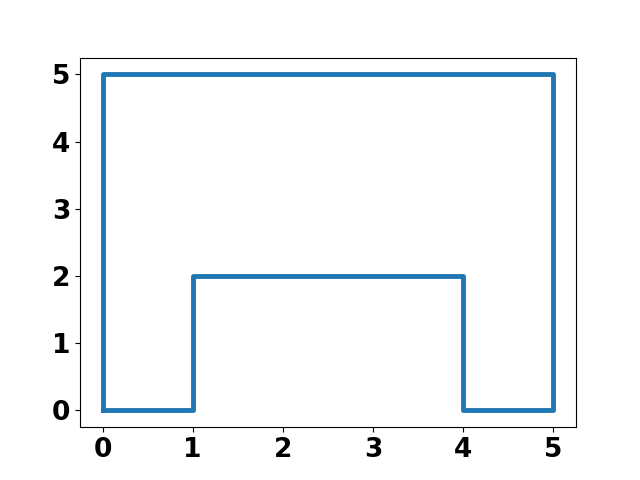}
  \caption{max-norm - It: 0}
  \label{fig:bridge_nonsmooth_0it}
\end{subfigure}\hfill 
\begin{subfigure}{.249\linewidth}
  \includegraphics[width=\linewidth]{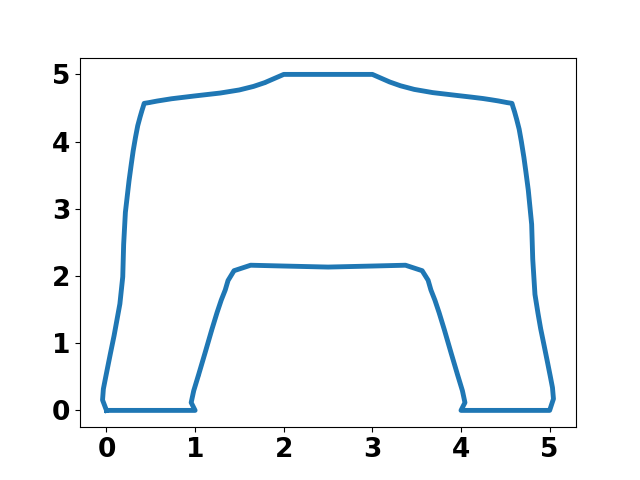}
  \caption{max-norm - It: 20}
  \label{fig:bridge_nonsmooth_20it}
\end{subfigure}\hfill 
\begin{subfigure}{.249\linewidth}
  \includegraphics[width=\linewidth]{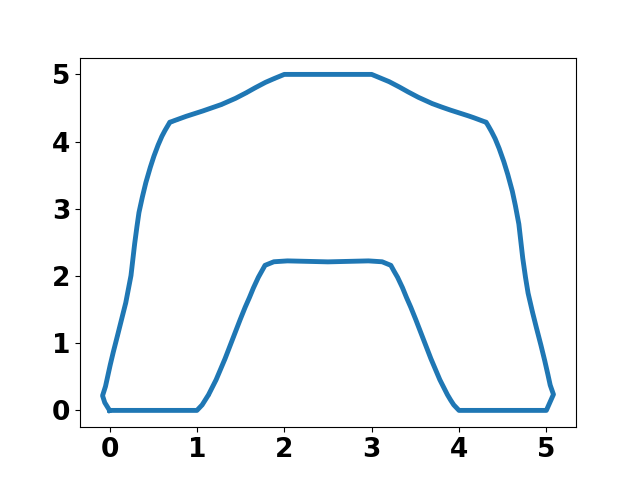}
  \caption{max-norm - It: 40}
  \label{fig:bridge_nonsmooth_40it}
\end{subfigure}\hfill 
\begin{subfigure}{.249\linewidth}
  \includegraphics[width=\linewidth]{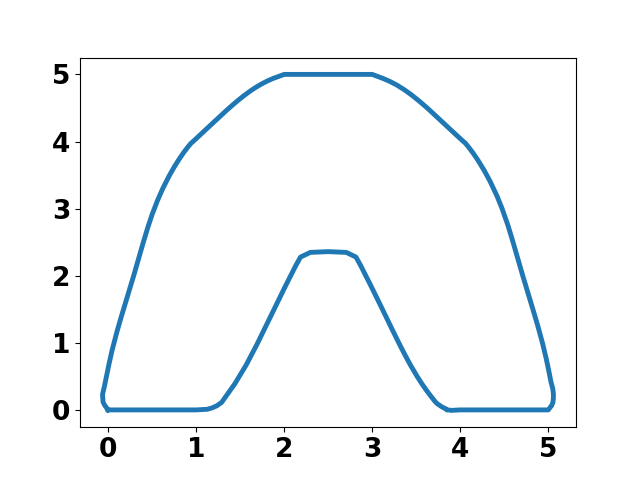}
  \caption{max-norm - It: 80}
  \label{fig:bridge_nonsmooth_80it}
\end{subfigure}

\medskip 

\begin{subfigure}{.249\linewidth}
  \includegraphics[width=\linewidth]{bridge_nonsmooth_0.png}
  \caption{$p$-norm - It: 0}
  \label{fig:bridge_smooth_0it}
\end{subfigure}\hfill 
\begin{subfigure}{.249\linewidth}
  \includegraphics[width=\linewidth]{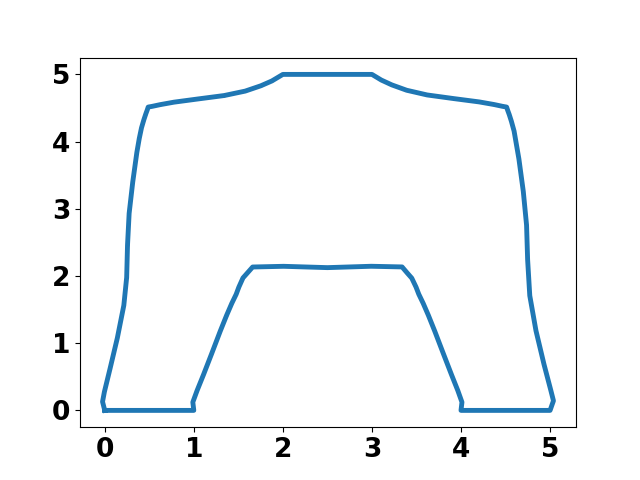}
  \caption{$p$-norm - It: 20}
  \label{fig:bridge_smooth_20it}
\end{subfigure}\hfill 
\begin{subfigure}{.249\linewidth}
  \includegraphics[width=\linewidth]{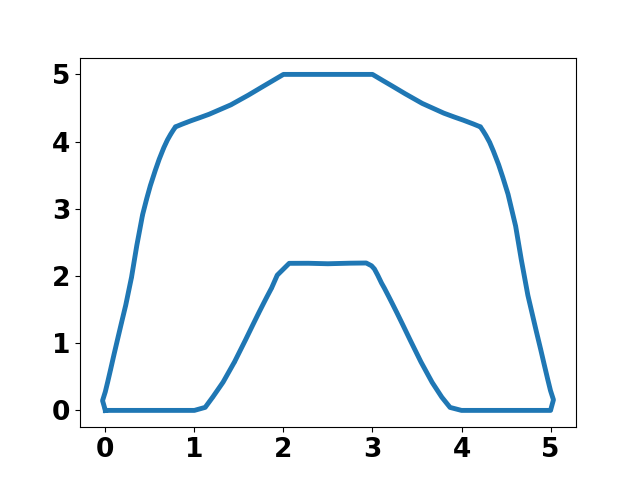}
  \caption{$p$-norm - It: 40}
  \label{fig:bridge_smooth_40it}
\end{subfigure}\hfill 
\begin{subfigure}{.249\linewidth}
  \includegraphics[width=\linewidth]{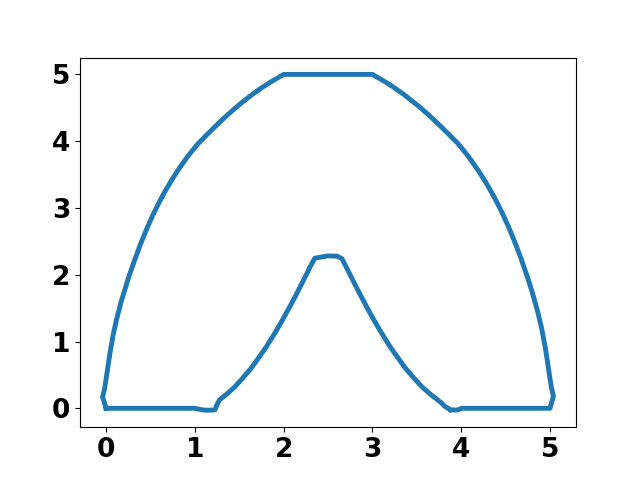}
  \caption{$p$-norm - It: 80}
  \label{fig:bridge_smooth_80it}
\end{subfigure}

\caption{Deformation of the bridge}
\label{fig:bridge_deformation}
\end{figure}

\begin{figure}[H]

\begin{subfigure}{.32\linewidth}
  \includegraphics[width=\linewidth]{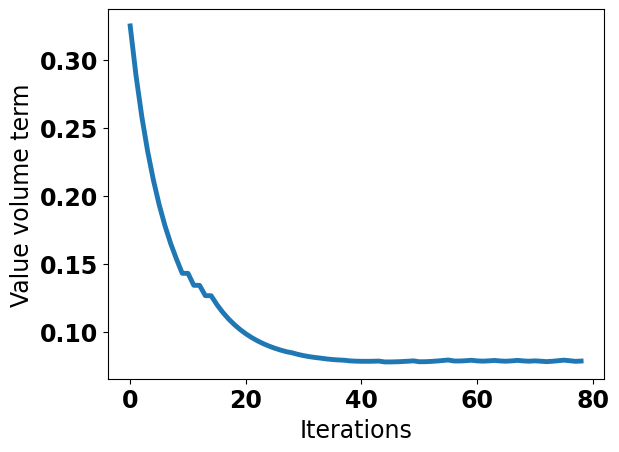}
  \caption{max-norm: volume cost}
  \label{fig:nonsmooth_bridge_volcost}
\end{subfigure}\hfill 
\begin{subfigure}{.32\linewidth}
  \includegraphics[width=\linewidth]{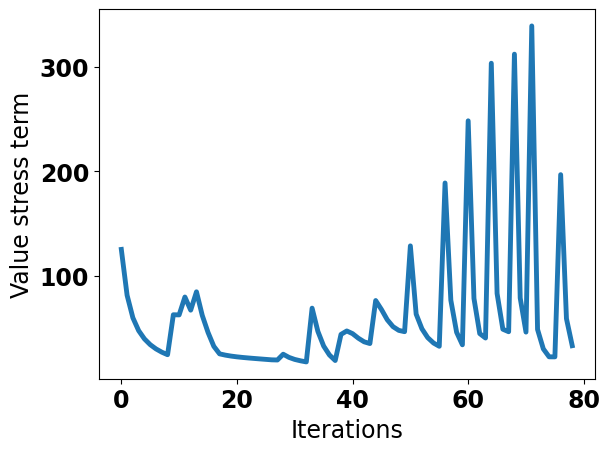}
  \caption{max-norm: stress cost}
  \label{fig:nonsmooth_bridge_stresscost}
\end{subfigure}\hfill 
\begin{subfigure}{.32\linewidth}
  \includegraphics[width=\linewidth]{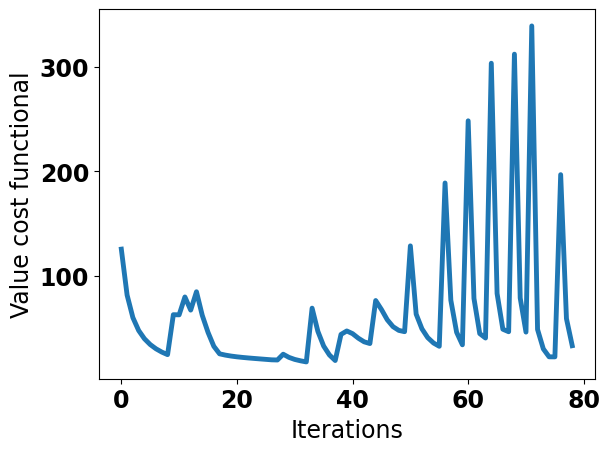}
  \caption{max-norm: total cost}
  \label{fig:nonsmooth_bridge_fullcost}
\end{subfigure}

\medskip 

\begin{subfigure}{.32\linewidth}
  \includegraphics[width=\linewidth]{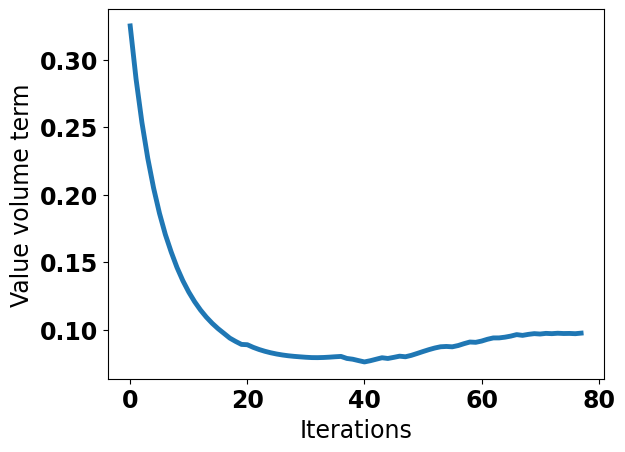}
  \caption{$p$-norm: volume cost}
  \label{fig:smooth_bridge_volcost}
\end{subfigure}\hfill 
\begin{subfigure}{.32\linewidth}
  \includegraphics[width=\linewidth]{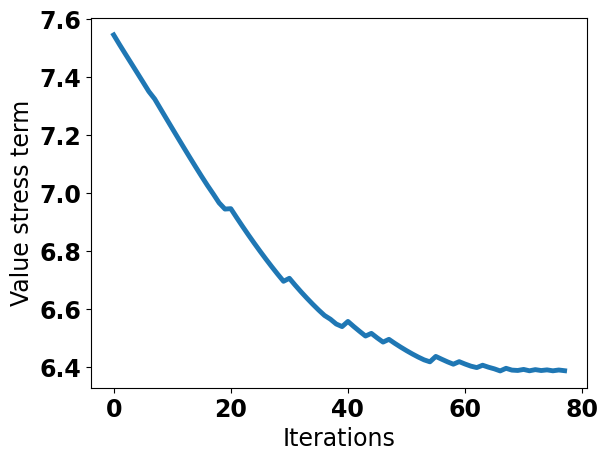}
  \caption{$p$-norm: stress cost}
  \label{fig:smooth_bridge_stresscost}
\end{subfigure}\hfill 
\begin{subfigure}{.32\linewidth}
  \includegraphics[width=\linewidth]{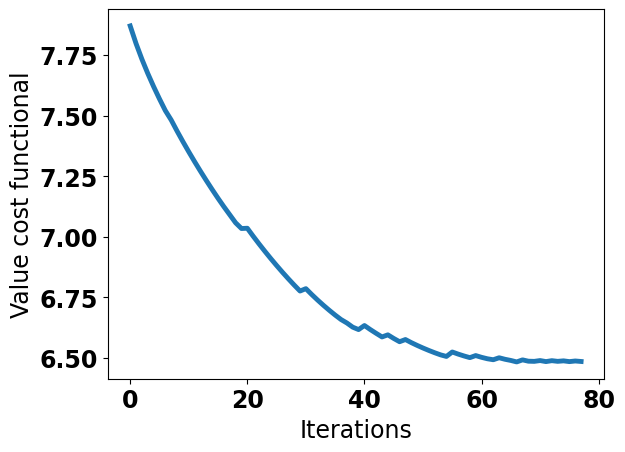}
  \caption{$p$-norm: total cost}
  \label{fig:smooth_bridge_fullcost}
\end{subfigure}

\caption{Cost evolution for the bridge problem}
\label{fig:bridge_cost}
\end{figure}

Again, we solved the PDE on the deformed domains after a fixed number of iterations with the identical meshsize $h^\text{min}=\min\{h^\text{min}_{p\text{-norm}}, h^\text{min}_\text{max-norm}\}$ and computed the maximal von Mises stress. The results for some iterations are listed in Table \ref{tab:bridge_comp}.

\begin{table}[H]
\begin{center}
\begin{tabular}{ |c||c|c| } 
 \hline
  & nonsmooth & smooth \\ 
 \hline\hline
 10 iterations & 92 & 86 \\ 
 \hline
  20 iterations & 52 & 76 \\ 
 \hline
  40 iterations & 214 & 593 \\ 
 \hline
   80 iterations & 964 & 1417 \\ 
 \hline
\end{tabular}
\end{center}
\caption{Comparison of the maximal von Mises stress $\sigma_M^2$}
\label{tab:bridge_comp}
\end{table}
In this example we are able to observe different behaviors of our approaches. While the $p$-norm approach yields a steady decline of the stress cost (Figure \ref{fig:smooth_bridge_stresscost}), the max-norm cost functional is vulnerable to remeshes and thus the associated curve shows some peaks (Figure \ref{fig:nonsmooth_bridge_stresscost}). Furthermore, we observe that the volume cost in the max-norm approach is monotone decreasing until it reaches a steady behavior at $\approx40$ iterations (Figure \ref{fig:nonsmooth_bridge_volcost}). Contrary, the $p$-norm approach shows a slightly faster decrease of the volume cost during the first few iterations (Figure \ref{fig:smooth_bridge_volcost}). Yet, the minimisation of the stress functional causes a minor increase of the volume cost. This can also be seen in the final shapes (Figure \ref{fig:bridge_smooth_80it}, Figure \ref{fig:bridge_nonsmooth_80it}), where the $p$-norm approach yields slightly wider bridge piers. Furthermore, the corners of the Dirichlet boundaries, i.e. the corners of the lower part of the bridge piers, attain a smoother appearance in the max-norm approach. This is also reflected by the last entry of Table \ref{tab:bridge_comp}, which shows a decrease of the maximal von Mises stress of $32\%$. In addition the averaging parameter $p$ had to be reduced compared to the previous example, since the $p$-norm approach did not manage to smoothen the interior corners for larger $p$.
\section{Conclusion and outlook}\label{sec:conclusion}
In this paper we derived a nonsmooth methodology to minimise peak stresses in the context of shape optimisation. We computed the associated semiderivatives and put them in the context of Clarke subgradients. Our numerical examples suggest that the nonsmooth approach yields faster stress minimisation compared to the regularised $p$-norm approach. Additionally, the max-norm approach does not entail the necessity to choose an appropriate averaging parameter $2\le p<\infty$. Contrary to common observations the max-norm algorithm yielded a similar runtime compared to the $p$-norm approach. This was enabled, since the maximiser of the von Mises stress functional in each iteration was unique.\newline
For future research, it would be interesting to investigate the adjoint variable corresponding to the stress functional in a very-weak sense. Furthermore, the efficient numerical treatment of these low regularity solutions could yield an improvement of our proposed method.
\subsection*{Acknowledgements}
Phillip Baumann has been funded by the Austrian Science Fund (FWF) project P 32911.

 \bibliographystyle{plain}
 \bibliography{stress_constraints}

\begin{thebibliography}{10}

\bibitem{a_ALJO_2008}
G.~Allaire and F.~Jouve.
\newblock Minimum stress optimal design with the level set method.
\newblock {\em Engineering Analysis with Boundary Elements}, 32(11):909--918,
  2008.
\newblock Shape and Topological Sensitivity Analysis: Theory and Applications.

\bibitem{a_ALJOMA_2004a}
G.~Allaire, F.~Jouve, and H.~Maillot.
\newblock Topology optimization for minimum stress design with the
  homogenization method.
\newblock {\em Struct. Multidiscip. Optim.}, 28(2-3):87--98, 2004.

\bibitem{a_AMNO_2010}
S.~Amstutz and A.~A. Novotny.
\newblock Topological optimization of structures subject to {V}on {M}ises
  stress constraints.
\newblock {\em Struct. Multidiscip. Optim.}, 41(3):407--420, 2010.

\bibitem{a_AMNO_2012a}
S.~Amstutz, A.~A. Novotny, and E.~A. de~Souza~Neto.
\newblock Topological derivative-based topology optimization of structures
  subject to {D}rucker-{P}rager stress constraints.
\newblock {\em Comput. Methods Appl. Mech. Engrg.}, 233/236:123--136, 2012.

\bibitem{a_BR_2008a}
M.~Bruggi.
\newblock On an alternative approach to stress constraints relaxation in
  topology optimization.
\newblock {\em Struct. Multidiscip. Optim.}, 36(2):125--141, 2008.

\bibitem{a_BUST_2005a}
M.~Burger and R.~Stainko.
\newblock Phase-field relaxation of topology optimization with local stress
  constraints.
\newblock {\em SIAM J. Control Optim.}, 45(4):1447--1466, 2006.

\bibitem{b_CI_2022}
P.~G. Ciarlet.
\newblock {\em Mathematical elasticity. {V}olume {I}. {T}hree-dimensional
  elasticity}, volume~84 of {\em Classics in Applied Mathematics}.
\newblock Society for Industrial and Applied Mathematics (SIAM), Philadelphia,
  PA, [2022] \copyright 2022.
\newblock Reprint of the 1988 edition [0936420].

\bibitem{b_Clarke_1983}
F.~H. Clarke.
\newblock {\em Optimization and nonsmooth analysis}.
\newblock Canadian Mathematical Society Series of Monographs and Advanced
  Texts. John Wiley \& Sons, Inc., New York, 1983.
\newblock A Wiley-Interscience Publication.

\bibitem{b_DEZO_2011a}
M.~C. Delfour and J.-P. Zolésio.
\newblock {\em Shapes and geometries}, volume~22 of {\em Advances in Design and
  Control}.
\newblock Society for Industrial and Applied Mathematics (SIAM), Philadelphia,
  PA, second edition, 2011.
\newblock Metrics, analysis, differential calculus, and optimization.

\bibitem{b_DEMALO_2014a}
V.~F. Demyanov and V.~N. Malozemov.
\newblock {\em Introduction to minimax}.
\newblock Dover Publications, Inc., New York, 1990.
\newblock Translated from the Russian by D. Louvish, Reprint of the 1974
  edition.

\bibitem{a_DEST_2022}
C.~Dev, G.~Stankiewicz, and P.~Steinmann.
\newblock Sequential topology and shape optimization framework to design
  compliant mechanisms with boundary stress constraints.
\newblock {\em Struct. Multidiscip. Optim.}, 65(6):Paper No. 180, 16, 2022.

\bibitem{a_DUBE_1998a}
P.~Duysinx and M.~P. Bends\o~e.
\newblock Topology optimization of continuum structures with local stress
  constraints.
\newblock {\em Internat. J. Numer. Methods Engrg.}, 43(8):1453--1478, 1998.

\bibitem{b_HEPI_2005a}
A.~Henrot and M.~Pierre.
\newblock {\em Variation et optimisation de formes}, volume~48 of {\em
  Mathématiques \& Applications (Berlin) [Mathematics \& Applications]}.
\newblock Springer, Berlin, 2005.
\newblock Une analyse géométrique. [A geometric analysis].

\bibitem{a_HOTOKL_2013a}
E.~Holmberg, B.~Torstenfelt, and A.~Klarbring.
\newblock Stress constrained topology optimization.
\newblock {\em Struct. Multidiscip. Optim.}, 48(1):33--47, 2013.

\bibitem{a_KOST_2012a}
M.~Ko\v{c}vara and M.~Stingl.
\newblock Solving stress constrained problems in topology and material
  optimization.
\newblock {\em Struct. Multidiscip. Optim.}, 46(1):1--15, 2012.

\bibitem{a_CHNO_2010a}
C.~Le, J.~Norato, T.~Bruns, C.~Ha, and D.~Tortorelli.
\newblock Stress-based topology optimization for continua.
\newblock {\em Structural and Multidisciplinary Optimization}, 41:605--620,
  2010.

\bibitem{a_NGKI_2020}
S.~H. Nguyen and H.-G. Kim.
\newblock Stress-constrained shape and topology optimization with the level set
  method using trimmed hexahedral meshes.
\newblock {\em Comput. Methods Appl. Mech. Engrg.}, 366:113061, 24, 2020.

\bibitem{a_PITO_2018}
R.~Picelli, S.~Townsend, C.~Brampton, J.~Norato, and H.~A. Kim.
\newblock Stress-based shape and topology optimization with the level set
  method.
\newblock {\em Comput. Methods Appl. Mech. Engrg.}, 329:1--23, 2018.

\bibitem{a_PITO_2018b}
R.~Picelli, S.~Townsend, and H.~A. Kim.
\newblock Stress and strain control via level set topology optimization.
\newblock {\em Struct. Multidiscip. Optim.}, 58(5):2037--2051, 2018.

\bibitem{a_TRTO_2018a}
M.~A. Salazar~de Troya and D.~A. Tortorelli.
\newblock Adaptive mesh refinement in stress-constrained topology optimization.
\newblock {\em Struct. Multidiscip. Optim.}, 58(6):2369--2386, 2018.

\bibitem{a_SC_2014}
J.~Schoeberl.
\newblock C++11 implementation of finite elements in ngsolve.
\newblock 09 2014.

\bibitem{b_SOZO_1992a}
J.~Sokolowski and J.-P. Zolésio.
\newblock {\em Introduction to Shape Optimization}.
\newblock Springer Berlin Heidelberg, 1992.

\bibitem{c_ST_2015a}
K.~Sturm.
\newblock Shape differentiability under non-linear pde constraints.
\newblock In {\em New trends in shape optimization}, volume 166 of {\em
  Internat. Ser. Numer. Math.}, pages 271--300. Birkhäuser/Springer, Cham,
  2015.

\bibitem{a_ST_2016a}
K.~Sturm.
\newblock Shape optimization with nonsmooth cost functions: from theory to
  numerics.
\newblock {\em SIAM J. Control Optim.}, 54(6):3319--3346, 2016.

\bibitem{a_SUTA_2013a}
K.~Suresh and M.~Takalloozadeh.
\newblock Stress-constrained topology optimization: a topological level-set
  approach.
\newblock {\em Struct. Multidiscip. Optim.}, 48(2):295--309, 2013.

\bibitem{a_VELAKE_2017a}
A.~Verbart, M.~Langelaar, and F.~van Keulen.
\newblock A unified aggregation and relaxation approach for stress-constrained
  topology optimization.
\newblock {\em Struct. Multidiscip. Optim.}, 55(2):663--679, 2017.

\bibitem{a_WAQI_2018a}
C.~Wang and X.~Qian.
\newblock Heaviside projection-based aggregation in stress-constrained topology
  optimization.
\newblock {\em Internat. J. Numer. Methods Engrg.}, 115(7):849--871, 2018.

\bibitem{b_ZI_1989a}
W.~P. Ziemer.
\newblock {\em Weakly differentiable functions}, volume 120 of {\em Graduate
  Texts in Mathematics}.
\newblock Springer-Verlag, New York, 1989.
\newblock Sobolev spaces and functions of bounded variation.

\end{thebibliography}
\end{document}